\newcommand{\real}{\mathbb R}
\def\st{\mathrm{\quad s.t.\quad}}
\def\half{\frac{1}{2}}
\def\cov{\mathrm{Cov}}
\def\E{\mathrm{E}}
\def\Var{\mathrm{Var}}
\def\Prob{\mathrm{P}}
\def\tr{\operatorname{tr}}
\def\vec{\operatorname{vec}}
\def\rank{\operatorname{rank}}
\def\tSig{{\Sigma}}
\def\tL{{L}}
\def\tLt{{L^{T}}}
\def\I{\mathcal{I}}
\newtheorem{theorem}{Theorem}
\newtheorem{lemma}[theorem]{Lemma} 
\newtheorem{remark}[theorem]{Remark}
\newtheorem{corollary}[theorem]{Corollary}
\newcommand{\vertiii}[1]
{{\left\vert\kern-0.25ex\left\vert\kern-0.25ex\left\vert #1
\right\vert\kern-0.25ex\right\vert\kern-0.25ex\right\vert}}
\newcommand{\norm}[1]{\left\lVert#1\right\rVert}
\DeclareMathOperator*{\argmin}{arg\,min}
\title{Learning Local Dependence In Ordered Data}
\author{Guo Yu\thanks{Department of Statistical Science, Cornell University, 1173 Comstock Hall, Ithaca, NY 14853, \href{mailto:gy63@cornell.edu}{gy63@cornell.edu} } \and
Jacob Bien\thanks{Department of Biological Statistics and Computational Biology and Department of Statistical Science, Cornell University, 1178 Comstock Hall, Ithaca, NY 14853, \href{mailto:jbien@cornell.edu}{jbien@cornell.edu}}}
\date{}
\begin{document}

\maketitle

\begin{abstract}
  In many applications, data come with a natural ordering.
  This ordering can often induce local dependence among nearby variables.
  However, in complex data, the width of this dependence may vary, making simple assumptions such as a constant neighborhood size unrealistic.
  We propose a framework for learning this local dependence based on estimating the inverse of the Cholesky factor of the covariance matrix.
  Penalized maximum likelihood estimation of this matrix yields a simple regression interpretation for local dependence in which variables are predicted by their neighbors.
  Our proposed method involves solving a convex, penalized Gaussian likelihood problem with a hierarchical group lasso penalty. 
  The problem decomposes into independent subproblems which can be solved efficiently in parallel using first-order methods.  
  Our method yields a sparse, symmetric, positive definite estimator of the precision matrix,
  encoding a Gaussian graphical model. 
  We derive theoretical results not found in existing methods 
  attaining this structure. In particular, our conditions for signed support recovery and 
  estimation consistency rates in multiple norms are as mild as those in a regression problem. 
  Empirical results show our method performing favorably compared to existing methods. 
  We apply our method to genomic data to flexibly model linkage disequilibrium. 
  Our method is also applied to improve the performance of discriminant analysis in sound recording classification.
\end{abstract}

\section{Introduction} \label{sec:introduction}
Estimating large inverse covariance matrices is a fundamental problem in modern multivariate statistics.
Consider a random vector $X = \left( X_1,\ldots,X_p \right)^T \in {\mathbb{R}}^p$ with mean zero and covariance
matrix $E (X X^T) = \Sigma$. 
Unlike the covariance matrix, which
captures marginal correlations among variables in $X$, the
inverse covariance matrix $\Omega = \Sigma^{-1}$ (also known as the precision matrix) 
characterizes conditional correlations
and, under a Gaussian model, $\Omega_{jk} = 0$ implies
that $X_j$ and $X_k$ are conditionally independent given all other variables.
When $p$ is large, it is common to regularize the precision matrix 
estimator by making it sparse \citep[see, e.g.,][]{pourahmadi2013high}.   
This paper focuses on the special context in which variables have a natural ordering, such as when data are
collected over time or along a genome.
In such a context, it is often reasonable to assume that random variables
that are far away in the ordering are less dependent than those that are close together.
For example, it is known that genetic mutations that occur close together on a chromosome 
are more likely to be coinherited than mutations that are located far apart.
We propose a method for estimating the 
precision matrix based on this assumption while also allowing each
random variable to have its own notion of closeness.

In general settings where variables do not necessarily have a known
ordering, two main types of convex methods 
with strong theoretical results have been developed
for introducing sparsity in $\Omega$. 
The first approach, known as the \textit{graphical lasso} \citep{yuan2007model, banerjee2008model, friedman2008sparse,
rothman2008sparse}, performs penalized maximum likelihood, solving
$\min_{\Omega \succ 0, \Omega = \Omega^T} 
\mathcal{L} \left( \Omega \right) + \lambda P\left( \Omega \right)$,
where $\mathcal{L}(\Omega)=-\log\det\Omega + n^{-1}
\sum_{i=1}^n x_i^T \Omega x_i$ is, up to constants, the
negative log-likelihood of a sample of $n$ independent Gaussian
random vectors and $P(\Omega)$ is the (vector) $\ell_1$-norm of $\Omega$.
\cite{zhang2014sparse} introduce a new convex loss function called the
\textit{D-trace loss} and propose a positive definite precision matrix estimator by minimizing an $\ell_1$-penalized version of this loss.
The second approach is through penalized pseudo-likelihood, the most
well-known of which is called \textit{neighborhood selection} \citep{meinshausen2006high}. 
Estimators in this category are usually solved by a column-by-column approach and thus are more amenable to theoretical
analysis \citep{yuan2010high,cai2011constrained,liu2012high, liu2012tiger, sun2013sparse, khare2014convex}. 
However they are not guaranteed to be positive definite and do not exploit the symmetry of $\Omega$.
\citet{peng2012partial} propose a partial correlation matrix estimator that 
develops a symmetric version of neighborhood selection; however, positive definiteness is still not guaranteed.

In the context of variables with a natural ordering, by contrast,
almost no work uses convex optimization to flexibly estimate
$\Omega$ while exploiting the ordering structure.
Sparsity is usually induced via the Cholesky decomposition of $\Sigma$,
which leads to a natural 
interpretation of sparsity.
Consider the Cholesky decomposition $\Sigma = QQ^T$, which implies $\Omega = L^T L$ for $L = Q^{-1}$ for
lower triangular matrices $Q$ and $L$ with positive diagonals.
The assumption that $X \sim N \left( 0, \Sigma \right)$
is then equivalent to a set of linear models in terms of rows of $L$, i.e., 
$L_{11}X_1=\varepsilon_1$ and
\begin{align}
  L_{rr}X_r = - \sum_{k=1}^{r-1} L_{rk} X_k + \varepsilon_r \quad r = 2,\ldots,p,
  \label{eq:regression}
\end{align}
where $\varepsilon \sim N\left( 0, I_p \right)$. 
Thus, $L_{rk} = 0$ (for $k < r$) can be interpreted as meaning that in predicting
$X_r$ from the previous random variables, one does not need to know $X_k$. This observation has motivated previous work, including 
\citet{pourahmadi1999joint,wu2003nonparametric,huang2006covariance, shojaie2010penalized, 2016arXiv161002436K}.
While these methods assume sparsity in $L$, they do not require local dependence because each variable is allowed to be
dependent on predecessors that are distant from it (compare the upper left to the upper right panel of Figure~\ref{fig:bestL}).

The assumption of ``local dependence''
can be expressed as saying
that each variable $X_r$ can be best explained by exactly its $K_r$
closest predecessors:
\begin{align}
  L_{rr}X_r = - \sum_{k=r-K_r}^{r-1} L_{rk} X_k + \varepsilon_r, \quad \text{for} \quad L_{rk} \neq 0, \quad r - K_r \leq k \leq r-1, \quad r = 2,\ldots,p.
  \label{eq:localdependence}
\end{align}
Note that this does not describe all patterns of a variable depending on its nearby variables.
For example, $X_r$ can be dependent on $X_{r-2}$ but not on $X_{r-1}$.
In this case, the dependence is still local, but would not be captured by \eqref{eq:localdependence}. 
We focus on the restricted class \eqref{eq:localdependence}
since it greatly simplifies the interpretation of the learned dependence structure by capturing the extent of this dependence in a single number $K_r$, the neighborhood size.

Another desirable property of model \eqref{eq:localdependence} is that it admits a simple connection between the sparsity
pattern of $L$ and the sparsity pattern of the precision matrix $\Omega$ in the Gaussian graphical model.
In particular, straightforward algebra shows that for $j < k$,
\begin{align}
  L_{kj} = \dots = L_{pj} = 0 \implies \Omega_{jk} = 0.
  \label{eq:sparsity}
\end{align}
Statistically, this says that if none of the variables $X_k,\dots, X_p$ depends on $X_j$ in the sense of \eqref{eq:regression},
then $X_j$ and $X_k$ are conditionally independent given all other variables.

\citet{bickel2008regularized} study theoretical properties
in the case that all bandwidths, $K_r$, are equal, in which case model \eqref{eq:localdependence} is a $K_r$-ordered 
antedependence model \citep{zimmerman2009antedependence}.
A banded estimate of $L$
then induces a banded estimate of $\Omega$. 
The \textit{nested lasso} approach of \citet{levina2008sparse} provides for ``adaptive
banding'', allowing $K_r$ to vary with $r$ \citep[which corresponds to variable-order antedependence models in][]{zimmerman2009antedependence};
however, the nested lasso
is non-convex, meaning that
the proposed algorithm does not necessarily minimize the stated
objective and theoretical properties of this estimator
have not been established.

In this paper, we propose a penalized likelihood approach that
provides the flexibility of the nested lasso but is formulated
as a convex optimization problem, which allows us to prove strong
theoretical properties and to provide an efficient, scalable algorithm
for computing the estimator.  The theoretical development of our
method allows us to make clear comparisons with known results for
the graphical lasso \citep{rothman2008sparse,ravikumar2011high} in the non-ordered
case.  Both methods are convex penalized likelihood approaches, so
this comparison highlights the similarities and differences in the ordered and non-ordered problems.

There are two key choices we make that lead to a convex formulation.
First, we express the optimization problem in terms of the Cholesky
factor $L$.  The nested lasso and other methods (starting with
\citealt{pourahmadi1999joint}) use the modified
Cholesky decomposition, $\Omega = T^T D^{-1} T$, where $T$ is
a lower-triangular matrix 
with ones on its diagonal and $D$ is a diagonal matrix with
positive entries. While 
$\mathcal{L}(\Omega)$ is convex in $\Omega$, the negative
log-likelihood $\mathcal{L}(T^T D^{-1} T)$ is not jointly
convex in $T$ and $D$.  By contrast, 
\begin{align}
  \mathcal{L}\left( L^TL \right) &= -\log \det \left( L^TL \right) + \frac{1}{n} 
  \sum_{i=1}^n {x}_i^T L^T L {x}_i= -2 \sum_{r=1}^p \log L_{rr} + \frac{1}{n} \sum_{i=1}^n 
  \norm{L {x}_i}_2^2
  \label{intro:negloglik}
\end{align}
is convex in $L$.
This parametrization is considered in
\citet{JMLR:v16:aragam15a}, \citet{khare2014convex}, and \citet{2016arXiv161002436K}.
Maximum likelihood estimation of $L$ preserves the regression interpretation by noting 
that 
\begin{align}
  \mathcal{L}\left( L^TL \right) = 
  -2 \sum_{r=1}^p \log L_{rr} + \frac{1}{n} \sum_{r=1}^p \sum_{i=1}^n
  L_{rr}^2 \left( x_{ir} + \sum_{k = 1}^{r-1} L_{rk} x_{ik} / L_{rr} \right)^2.
  \nonumber
\end{align}
This connection has motivated
previous work
with the modified Cholesky decomposition, in which
$T_{rk}=-L_{rk}/L_{rr}$ are the coefficients of a linear model in which $X_r$
is regressed on its predecessors, and $D_{rr}=L_{rr}^{-2}$ corresponds to the error
variance. The second key choice 
is our use of a hierarchical group lasso in place of the nested lasso's nonconvex penalty.

We introduce here some notation used throughout the paper.
For two sequences of constants $a(n)$ and $b(n)$, the notation $a(n) = o\left( b(n) \right)$ means that for every $\varepsilon > 0$,
there exists a constant $N > 0$ such that
$|a(n) / b(n)| \leq \varepsilon$ for all $n \geq N$. 
And the notation $a(n) = \mathcal{O} \left( b(n) \right)$ means that there exists a constant $N > 0 $ and a constant $M > 0$ such that
$|a(n) / b(n)| \leq M$ for all $n \geq N$.
For a sequence of random variables $A(n)$,
the notation $A(n) = \mathcal{O}_P\left( b(n) \right)$ means that for every $\varepsilon > 0$, there exists a constant $M > 0$ such that 
$\mathrm{P}\left( |A(n) / b(n)| > M  \right) \leq \varepsilon$ for all $n$.

For a vector $v = \left( v_1,\ldots,v_p \right) \in \real^p$, 
we define $\norm{v}_1 = \sum_{j=1}^p |v_j|$, $\norm{v}_2 = ( \sum_{j=1}^p v_j^2 )^{1/2}$ and 
$\norm{v}_\infty = \max_{j} |v_j|$. For a matrix $M \in \real^{n \times p}$, 
we define the element-wise norms by two vertical bars. Specifically,
$\norm{M}_\infty = \max_{jk} |M_{jk}|$ and Frobenius norm
$\norm{M}_F = ( \sum_{j,k} M_{jk}^2 )^{1/2}$.
For $q \geq 1$,
we define the matrix-induced (operator) $q$-norm by three vertical bars:
$\vertiii{M}_q = \max_{\norm{v}_q = 1} \norm{Mv}_q$.
Important special cases include $\vertiii{M}_2$, also known as the spectral norm, which is the largest singular value of $M$, 
as well as
$\vertiii{M}_1 = \max_k \sum_{j=1}^p |M_{jk}|$ and $\vertiii{M}_\infty = \max_j \sum_{k=1}^p |M_{jk}|$.
Note that $\vertiii{M}_1 = \vertiii{M}_\infty$ when $M$ is symmetric.

Given a $p$-vector $v$, a $p\times p$ matrix $M$, and an index set
$T$, let $v_T = ( v_i
)_{i \in T }$ be the $|T|$-subvector and $M_T$
the $p \times |T|$ submatrix with columns selected
from $T$.  Given a second index set $T'$, let $M_{TT'}$ be the $|T|\times|T'|$ submatrix 
with rows and columns of $M$ indexed by $T$ and $T'$, respectively. 
Specifically, we use $L_{r\cdot}$ to denote the $r$-th row of $L$.

\section{Estimator} \label{sec:estimator}
For a given tuning parameter $\lambda \geq 0$, we define our estimator $\hat{L}$
to be a minimizer of the following penalized negative Gaussian log-likelihood 
\begin{align}
  \hat L \in \argmin_{\substack{L: L_{rr}>0 \\ L_{rk}=0\text{ for }r < k }}
  \left\{-2\sum_{r=1}^p
  \log L_{rr} + \frac{1}{n} \sum_{i=1}^n \norm{L {x}_i}_2^2 + 
  \lambda \sum_{r=2}^p P_r\left( L_{r\cdot} \right) \right\}.
  \label{est:estimator}
\end{align}
The penalty $P_r$, which is applied to the $r$-th row, is defined by
\begin{align}
  P_r\left( L_{r\cdot} \right) =  \sum_{\ell = 1}^{r-1} \norm{W^{(\ell)} \ast L_{g_{r,\ell}}}_2 = 
  \sum_{\ell = 1}^{r-1} \left( \sum_{m=1}^\ell w_{\ell m}^2 L_{rm}^2 \right)^{1/2},
  \label{est:penalty}
\end{align}
where $W^{(\ell)} = (w_{\ell 1}, \ldots, w_{\ell \ell}) \in \real^{\ell}$ is a vector of weights, $\ast$ denotes element-wise multiplication,
and $L_{g_{r, \ell}}$ denotes the vector of elements of $L$ from the group $g_{r, \ell}$, which corresponds to the first $\ell$ elements in the $r$-th row
(for $1 \leq \ell \leq r - 1$):
$$g_{r,\ell} = \left\{ (r, \ell'): \ell' \leq \ell \right\}.$$
Since $g_{r,1} \subset g_{r,2} \subset \cdots \subset g_{r, r-1}$,
each row $r$ of $L$ is penalized with a sum of $r-1$ nested, weighted $\ell_2$-norm
penalties. This is a hierarchical group lasso penalty \citep{yuan2007model,zhao2009composite,jenatton2011structured, yan2015hierarchical} with group
structure conveyed in Figure~\ref{fig:group}.

With $w_{\ell m} > 0$, this nested structure always puts more penalty on those
elements that are further away from the diagonal.
Since the group lasso has the effect of setting to zero a subset of groups,
it is apparent that
this choice of groups ensures that whenever the elements in $g_{r, \ell}$ are set to zero, elements in $g_{r, \ell'}$ are also set to zero for all $\ell' \leq \ell$.
In other words, for each row of $\hat L$, the non-zeros are those elements within some
(row-specific) distance of the diagonal.
This is in contrast to the $\ell_1$-penalty as used in \citet{2016arXiv161002436K}, which produces sparsity patterns with no particular structure
(compare the top-left and top-right panels of Figure~\ref{fig:bestL}).

The choice of weights, $w_{\ell m}$, affects both the empirical
and theoretical performance of the estimator. We focus primarily on a quadratically decaying set of weights,
\begin{align}
  w_{\ell m} = \frac{1}{\left( \ell - m + 1 \right)^2},
  \label{est:generalweight}
\end{align}
but also consider the unweighted case (in which $w_{\ell m}=1$).
The decay counteracts the fact that the elements of $L$ appear in
differing numbers of groups (for example $L_{r1}$ appears in $r-1$
groups whereas $L_{r,r-1}$ appears in just one group).  
In a related problem, \citet{bien2015convex} choose weights that
decay more slowly with $\ell-m$ than \eqref{est:generalweight}.
Our choice makes the enforcement of hierarchy weaker so that 
our penalty behaves more closely to the lasso penalty \citep{tibshirani1996regression}. 
The choice of weight sequence in \eqref{est:generalweight} is
more amenable to theoretical analysis; however, in practice the
unweighted case is more efficiently implemented and works well empirically.
\begin{figure}
  \centering
  \includegraphics[width=0.9\textwidth]{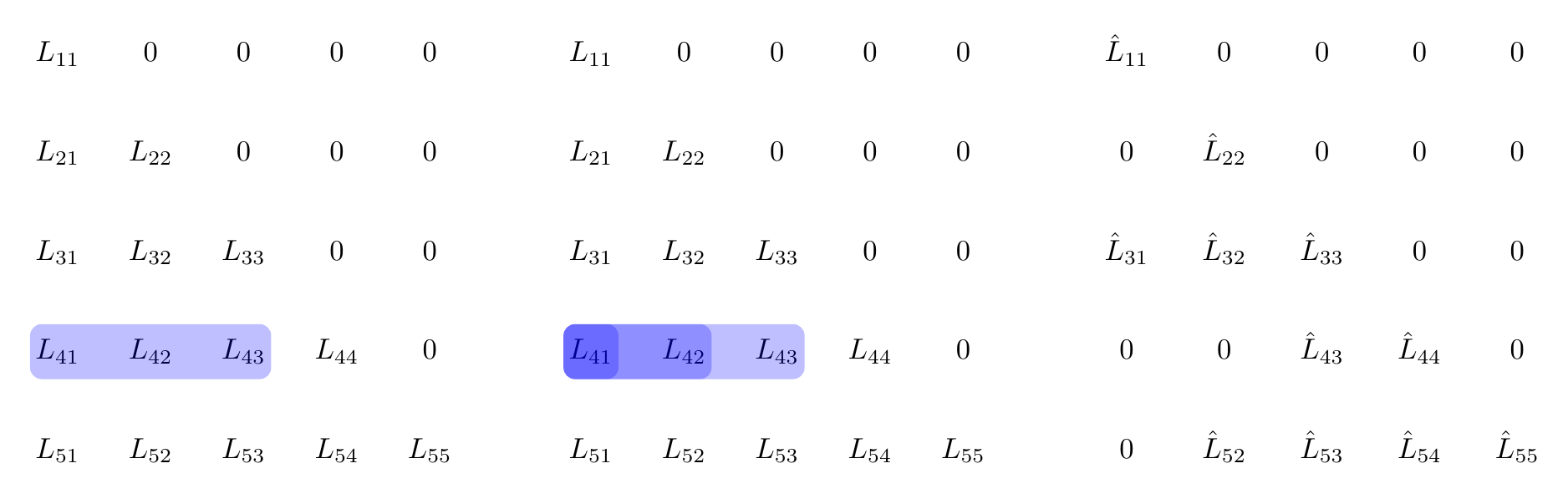}
  \caption{There are $p \choose 2$ groups used in the penalty, with each row $r$ having $r-1$ nested groups $g_{r,1} \subset g_{r,2} \subset \cdots \subset g_{r, r-1}$. 
  Left: the group $g_{4,3}$. Middle: the nested group structure $g_{4,1} \subset g_{4,2} \subset g_{4,3}$.
  Right: A possible sparsity pattern in $\hat{L}$, where elements in $g_{2,1}, g_{4,2}$ (and thus $g_{4,1}$) and $g_{5,1}$ are set to zero.}
  \label{fig:group}
\end{figure}

Problem \eqref{est:estimator} is convex in $L$. 
While $-\log\det(\cdot)$ is strictly convex, $-\sum_{r}\log(L_{rr})$ is
not strictly convex in $L$.  Thus, the $\argmin$ in
\eqref{est:estimator} may not be unique.  In Section \ref{sec:theory},
we provide sufficient conditions to ensure uniqueness with high probability.

In Appendix \ref{online:decouple}, 
we show that \eqref{est:estimator}
decouples into $p$ independent subproblems,
each of which estimates one row of $L$. More specifically,
let $\mathbf{X} \in \real^{n\times p}$ be a sample matrix with independent rows $x_i \sim N(0, \Sigma)$,
$ \hat{L}_{11} = n^{1/2}(\mathbf{X}_1^T \mathbf{X}_1)^{-1/2}$ and for $r = 2,\ldots,p$,
\begin{align}
  \hat{L}_{r,1:r} = \argmin_{\beta \in \real^r: \beta_r >0 }
  \left\{ -2\log \beta_r + \frac{1}{n}\norm{\mathbf{X}_{1:r}\beta}_2^2 
  + \lambda
  \sum_{\ell=1}^{r-1} \left( \sum_{m=1}^\ell w_{\ell m}^2 \beta_m^2 \right)^{1/2}\right\}.
  \label{est:subproblem}
\end{align}
This observation means that the computation can be
easily parallelized,
which potentially can achieve a linear speed up with the number of CPU cores. 
Theoretically, to analyze the properties
of $\hat{L}$ it is easier to start by studying an estimator of each row, i.e.,
a solution to \eqref{est:subproblem}. We will see in Section 
\ref{sec:theory} that problem \eqref{est:subproblem} has connections
to a penalized regression problem, meaning that both the assumptions and results
we can derive are better than if we were working with a penalty based on $\Omega$.

In light of the regression interpretation of \eqref{eq:regression}, $\hat L$ provides an interpretable notion of local
dependence; however, we can of course also use our estimate of $L$ to estimate $\Omega$: $\hat{\Omega} = \hat{L}^T \hat{L}$.
By construction, this estimator is both symmetric and positive definite.
Unlike a lasso penalty, which would induce unstructured sparsity in the estimate of $L$ and thus would not be guaranteed to produce a sparse estimate of $\Omega$,
the adaptively banded structure in our estimator of $L$ can yield a generally banded $\hat\Omega$ 
with sparsity pattern determined by \eqref{eq:sparsity}
(See the top-left and bottom-left panels in Figure~\ref{fig:bestL} for an example). 

\section{Computation} \label{sec:computation}
As observed above, we can compute $\hat L$ by 
solving (in parallel across $r$) problem \eqref{est:subproblem}.
Consider an alternating direction
method of multipliers (ADMM) approach that solves the equivalent problem
\begin{align}
  \min_{\beta, \gamma \in \real^r: \beta_r > 0}
  \left\{ -2 \log \beta_r + \frac{1}{n} \norm{\mathbf{X}_{1:r} \beta}_2^2 + \lambda 
  \sum_{\ell=1}^{r-1} \left( {\sum_{m=1}^\ell w_{\ell m}^2 \gamma_m^2} \right)^{1/2}
\st \beta = \gamma \right\}.
\nonumber
\end{align}
Algorithm \ref{alg:ADMM} presents the ADMM algorithm, which
repeatedly minimizes this problem's augmented Lagrangian 
over $\beta$, then over $\gamma$, and then updates the dual
variable $u \in \real^r$.
\begin{algorithm}
  \caption{ADMM algorithm to solve \eqref{est:subproblem}}
  \begin{algorithmic}[1]
    \Require $\beta^{(0)}$, $\gamma^{(0)}$, $u^{(0)}$, $\rho>0$, $t=1$.
    \Repeat
    \State
    \begin{flalign}
      \beta^{(t)} &\gets  
      \argmin_{\beta \in \real^r: \beta_r >0}\left\{ -2 \log \beta_r + \frac{1}{n} 
      \norm{\mathbf{X}_{1:r}\beta}_2^2 + \left( \beta - \gamma^{(t-1)} \right)^T u^{(t-1)}
      + \frac{\rho}{2} \norm{\beta-\gamma^{(t-1)}}_2^2 \right\} 
      \label{comp:updatebeta} &&
    \end{flalign}
    \State
    \begin{flalign}
      \gamma^{(t)} &\gets 
      \argmin_{\gamma \in \real^r} 
      \left\{ \frac{\rho}{2} \norm{\gamma - \beta^{(t)} - 
      \rho^{-1} u^{(t-1)}}_2^2 + \lambda \sum_{\ell=1}^{r-1} 
      \left( \sum_{m=1}^\ell w_{\ell m}^2 \gamma_m^2 \right)^{1/2}  \right\} 
      \label{comp:updategamma} &&
    \end{flalign}
    \State
    $u^{(t)} \gets u^{(t-1)} + \rho \left( \beta^{(t)} - 
    \gamma^{(t)} \right)$\;
    \State
    $t \gets t+1$\;
    \Until{convergence}
    \State \Return{$\gamma^{(t)}$}
  \end{algorithmic}
  \label{alg:ADMM}
\end{algorithm}
The main computational effort in the algorithm is in solving 
\eqref{comp:updatebeta} and \eqref{comp:updategamma}. 
Note that \eqref{comp:updatebeta} has a smooth objective
function. Straightforward calculus
gives the closed-form solution (see Appendix \ref{online:closeformupdatebeta} for detailed derivation),
\begin{align}
  &\beta^{(t+1)}_r = \frac{-B - \sqrt{B^2 - 8 A}}{2A} > 0
  \nonumber\\
  &\beta^{(t+1)}_{-r} = - \left( 2 S^{(r)}_{-r,-r} + 
  \rho I \right)^{-1} \left( 2 S^{(r)}_{-r,r} \beta^{(t+1)}_r 
  + u^{(t)}_{-r} - \rho \gamma^{(t)}_{-r} \right),
  \nonumber
\end{align}
where
\begin{align}
  &S^{(r)} = \frac{1}{n}\mathbf{X}_{1:r}^T \mathbf{X}_{1:r}
  \nonumber\\
  &A = 4 S^{(r)}_{r, -r} \left( 2 S^{(r)}_{-r,-r} + \rho I \right)^{-1} S^{(r)}_{-r,r} 
  - 2 S^{(r)}_{r,r} - \rho < 0
  \nonumber\\
  &B = 2 S^{(r)}_{r,-r} \left( 2 S^{(r)}_{-r,-r} + \rho I \right)^{-1}\left( u^{(t)}_{-r} 
  - \rho \gamma^{(t)}_{-r} \right) - u^{(t)}_r + \rho \gamma^{(t)}_r.
  \nonumber
\end{align}

The closed-form update above involves matrix inversion. With $\rho>0$,
the matrix $2 S^{(r)}_{-r,-r} + \rho I$ is invertible even when $r>n$. 
Since determining a good choice for the ADMM parameter $\rho$ is in
general difficult, we adapt the dynamic $\rho$ updating scheme described 
in Section 3.4.1 of \citet{boyd2011distributed}.

Solving \eqref{comp:updategamma} requires evaluating the proximal
operator of the hierarchical group lasso with general weights.  We
adopt the strategy developed in \citet{bien2015convex} (based on
a result of \citealt{jenatton2011structured}),
which solves the dual problem of \eqref{comp:updategamma} by
performing Newton's method on at most $r-1$ univariate functions.
The detailed
implementation is given in
Algorithm \ref{online:alg:BCDondual} in Appendix \ref{online:dualproblem}. 
Each application of Newton's method corresponds to performing an elliptical projection,
which is a step of blockwise coordinate ascent on the dual of \eqref{comp:updategamma}
(see Appendix \ref{online:ellipticalprojection} for details).
Finally we observe in Algorithm \ref{alg:updategammanoweight} that for the unweighted case ($w_{\ell m} = 1$), solving 
\eqref{comp:updategamma} is remarkably efficient. 
\begin{algorithm}
  \caption{Algorithm for solving \eqref{comp:updategamma} for unweighted estimator}
  \begin{algorithmic}[1]
    \Require $\beta^{(t)}, u^{(t - 1)} \in \real^r$, $\lambda, \rho > 0$.
    \State Initialize $\gamma^{(t)} = \beta^{(t)} + u^{(t-1)}/\rho$ and $\tau = \lambda/\rho$\;
    \For{$\ell = 1,\ldots,r-1$}
    \[
      \left( \gamma^{(t)} \right)_{1:\ell} \gets
      \left( 1 - \frac{\tau}{\norm{\left( \gamma^{(t)} \right)_{1:\ell}}_2} \right)_+ 
      \left( \gamma^{(t)} \right)_{1:\ell}
    \]
  \EndFor
  \State \Return{$\gamma^{(t)}$.}
\end{algorithmic}
\label{alg:updategammanoweight}
\end{algorithm}

The \texttt{R} package \texttt{varband} provides \texttt{C++} implementations of Algorithms \ref{alg:ADMM} and \ref{alg:updategammanoweight}.

\section{Statistical Properties} \label{sec:theory}
In this section we study the statistical properties of our estimator.
In what follows, we consider a lower triangular matrix $L$ having row-specific bandwidths, $K_r$.
The first $J_r = r-1-K_r$ elements of row $r$ are zero, and the band of
non-zero off-diagonals (of size $K_r$) is denoted $\I_r=\left\{ J_r+1,\dots,r-1\right\}$. 
We also denote $\I_r^c = \left\{ 1, 2, \dots, r \right\} \setminus \I_r$.
See Figure~\ref{fig:sparsityassumption} for a graphical example of $K_5, J_5, \mathcal{I}_4$, and $\mathcal{I}_4^c$.
\begin{figure}[h]
  \centering
  \includegraphics[width=0.4\textwidth]{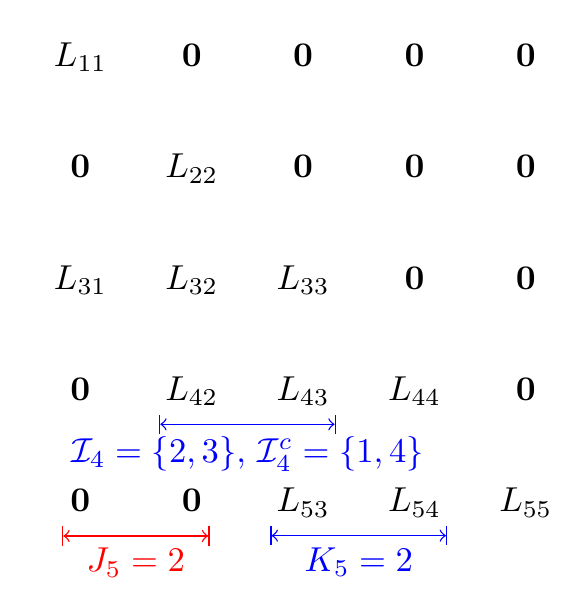}
  \caption{Schematic showing $J_r, K_r, \I_r$, and $\I_r^c$.}
  \label{fig:sparsityassumption}
\end{figure}

Our theoretical analysis is built on the following assumptions:
\begin{enumerate}[label=\textbf{A\arabic*}]
  \item \textit{Gaussian assumption}: \label{agaussian}
    The sample matrix $\mathbf{X} \in \real^{n\times p}$ has $n$ independent rows 
    with each row $x_i$ drawn from $N(\mathbf{0},\Sigma)$.
  \item \textit{Sparsity assumption}: \label{asparsity}
    The true Cholesky factor $L \in \real^{p\times p}$ is the lower triangular matrix 
    with positive diagonal elements
    such that the
    precision matrix $\Omega = \Sigma^{-1} = L^T L$.
    The matrix $L$ has row-specific bandwidths $K_r$ such that $L_{rj} = 0$ for $0 < j < r - K_r$.
  \item \textit{Irrepresentable condition}: \label{airrepre}
  There exists some $\alpha \in (0,1]$ such that
  \begin{align}
    \max_{2 \leq r \leq p} \max_{\ell \in \mathcal{I}^{c}_r } 
    \norm{\Sigma_{\ell\I_r} \left( 
    \Sigma_{\I_r\I_r}\right)^{-1}}_1 
    \leq \frac{6}{\pi^2}\left( 1-\alpha \right)
    \nonumber
  \end{align}
\item \textit{Bounded singular values}:\label{abddsval}
  There exists a constant $\kappa$ such that 
  \[
    0 < \kappa^{-1} \leq \sigma_{\min} \left( L \right) \leq 
    \sigma_{\max} \left( L \right) \leq \kappa
  \]
\end{enumerate}

When $\max_r K_r < n$, the Gaussianity assumption \ref{agaussian} implies that 
$\mathbf{X}_{\I_r}$ has full column rank for all $r$ with
probability one. 
Our analysis applies to the general high-dimensional scaling scheme where  
$K_r = K_r(n)$ and $p = p(n)$ can grow with $n$.

For $r = 2,\ldots,p$ and $\ell \in {\I}_r^c = \left\{ 1,\ldots,J_r,r \right\}$, let
\begin{align}
  \theta^{(\ell)}_{r} := \Var\left( X_\ell | X_{\mathcal{I}_r} \right)  \quad \text{and} \quad
  \theta_r := \max_{\ell \in \mathcal{I}_r^c} \theta^{(\ell)}_{r}.
  \nonumber
\end{align}
By Assumption \ref{agaussian}, 
$\theta_r^{(\ell)} =
\Sigma_{\ell\ell} - \Sigma_{\ell \mathcal{I}_r}
\left( \Sigma_{\mathcal{I}_r\mathcal{I}_r} \right)^{-1} 
\Sigma_{\mathcal{I}_r \ell}$
represents the noise variance when regressing $X_\ell$ on $X_{\mathcal{I}_r}$, i.e.,
for $\ell = 1,\ldots,J_r, r$, 
\begin{align}
  X_\ell = \Sigma_{\ell \mathcal{I}_r}( \Sigma_{\I_r \I_r} )^{-1} X_{\I_r}^T + E_\ell \qquad 
  \text{with} \qquad E_\ell \sim N\left( 0, \theta_r^{(\ell)} \right).
  \label{theory:theta_r}
\end{align}
In words,
$\theta_r^{(\ell)}$ measures the degree to which $X_\ell$ cannot be explained by the variables in the support
and $\theta_r$ is the maximum such value over all $\ell$ outside of the support $\mathcal{I}_r$ in the $r$-th row.
Intuitively, the difficulty of the estimation problem increases with $\theta_r$.
Note that for $r = 1,\ldots,p$, \eqref{eq:regression} implies $\theta^{(r)}_{r} = 1/L_{rr}^{2}$.

Assumption \ref{airrepre} (along with the $\beta_{\min}$ condition) is essentially a necessary and sufficient condition 
for support recovery of lasso-type methods
\citep[see, e.g.,][]{zhao2006model, meinshausen2006high, wainwright2009sharp, Van2009On, ravikumar2011high}.
The constant $\alpha \in (0, 1]$ is usually referred to as the irrepresentable (incoherence) constant \citep{wainwright2009sharp}.
Intuitively, the irrepresentable condition requires low correlations between signal and noise predictors, and thus
a value of $\alpha$ that is close to 1 implies that recovering the support is easier to achieve.
The constant $6\pi^{-2}$ is determined by the choice of weight
\eqref{est:generalweight} and can be eliminated by absorbing its reciprocal into the
definition of the weights $w_{\ell m}$.
Doing so, one finds that our irrepresentable condition is essentially
the same as the one 
found in the regression setting \citep{wainwright2009sharp} despite
the fact that our goal is estimating a precision matrix.

Assumption \ref{abddsval} is a bounded singular value condition.
Recalling that $\Omega = L^TL$,
\begin{align}
  0 < \kappa^{-2} \leq \sigma_{\min} \left( \Sigma \right) \leq 
  \sigma_{\max} \left( \Sigma \right) \leq \kappa^{2},
  \label{theory:reseigenvalue}
\end{align}
which is equivalent to the commonly used bounded eigenvalue condition in 
other literatures. 

\subsection{Row-Specific Results} \label{subsec:row}
We start by analyzing support recovery properties of our estimator for each row,
i.e.,
the solution to the subproblem
\eqref{est:subproblem}.
For $r>n$, the Hessian of the negative log-likelihood is not positive
definite, meaning that the objective function may not be strictly
convex in $\beta$ and 
the solution not necessarily unique. Intuitively, 
if the tuning parameter $\lambda$ is large, the resulting 
row estimate $\hat{L}_{r\cdot}$ is sparse and thus includes most variation in a
small subset of the $r$ variables. More specifically, for large $\lambda$,
$\hat{\I}_r \subseteq \I_r$ and thus by Assumption \ref{agaussian},
$\mathbf{X}_{\hat{\I}_r}$ has full rank, 
which implies that $\hat{L}_{r\cdot}$ is unique. 
The series of
technical lemmas in Appendix \ref{online:sparserow} 
precisely characterizes the solution.

The first part of the theorem below shows that with an appropriately chosen
tuning parameter $\lambda$ the solution to 
\eqref{est:subproblem} is sparse enough to be unique and that we will
not over-estimate the true bandwidth. 
Knowing that the support of the unique row estimator $\hat L_{r\cdot}$
is contained in the true support reduces the dimension of the parameter space, and thus
leads to a reasonable error bound.
Of course, if our goal were simply to establish the uniqueness of $\hat
L_{r\cdot}$ and that $\hat{K}_r \leq K_r$, we could trivially take $\lambda = \infty$ (resulting
in $\hat{K}_r=0$). 
The latter part of the theorem thus goes on to provide a choice of 
$\lambda$ that is sufficiently small to guarantee that $\hat{K}_r=K_r$
(and, furthermore, that the signs of all non-zeros are correctly recovered).

\begin{theorem}
  \label{thm:row}
  Consider the family of tuning parameters 
  \begin{align}
    \lambda = \frac{8}{\alpha}\sqrt{\frac{\theta_r \log r}{n}}
    \label{theory:lambdar}
  \end{align}
  and weights given by \eqref{est:generalweight}.
  Under Assumptions \ref{agaussian}--\ref{abddsval}, if the tuple $\left( n, J_r, K_r \right)$ satisfies
  \begin{align}
    n > \alpha^{-2}
    \left( 3\pi^2 K_r + 8 \right) \theta_r \kappa^2\log J_r,
    \label{theory:samplesizer}
  \end{align}
  then with probability 
  greater than $1- c_1 \exp\left\{ -c_2 
  \min( K_r, \log J_r )\right\} - 
  7 \exp\left( -c_3 n \right)$ for some constants $c_1,c_2,c_3$ independent of 
  $n$ and $J_r$, the following properties hold:
  \begin{enumerate}
    \item The row problem \eqref{est:subproblem} has a unique solution 
      $\hat{L}_{r\cdot}$ and
      $\hat{K}_r \leq K_r$.
    \item The estimate $\hat{L}_{r\cdot}$ satisfies the element-wise $\ell_\infty$
      bound,
      \begin{align}
        \norm{\hat{L}_{r\cdot} - L_{r\cdot}}_\infty \leq \lambda\left(4 
        \vertiii{\left(\Sigma_{\mathcal{I}_r\mathcal{I}_r}\right)^{-1}}_\infty 
        + 5\kappa^2 \right).
        \label{theory:infinityboundrow}
      \end{align}
    \item If in addition,
      \begin{align}
        \min_{j \geq J_r+1}\left|L_{rj}\right| > \lambda\left(4 
        \vertiii{\left(\Sigma_{\mathcal{I}_r\mathcal{I}_r}\right)^{-1}}_\infty 
        + 5\kappa^2 \right),
        \label{theory:betamin}
      \end{align}
      then exact signed support recovery holds: For all $j \leq
      r$, $\mathrm{sign} ( \hat{L}_{rj} ) = \mathrm{sign} ( L_{rj} )$.
  \end{enumerate}
\end{theorem}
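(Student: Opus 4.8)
The plan is to establish all three parts through a single primal–dual witness (PDW) construction tailored to the row subproblem \eqref{est:subproblem}. Write $T=\I_r\cup\{r\}$ for the true support (including the unpenalized diagonal index $r$), $N=\{1,\dots,J_r\}$ for the indices that should vanish, and let $\beta^\star=L_{r,1:r}$ be the true coefficient vector, so that $e:=\mathbf{X}_{1:r}\beta^\star$ is the length-$n$ residual vector. I would first record the KKT system for \eqref{est:subproblem}: stationarity $\frac{2}{n}(\mathbf{X}_{1:r}^T\mathbf{X}_{1:r}\hat\beta)_m+\lambda\hat z_m=0$ for $m\neq r$, the diagonal equation $-2/\hat\beta_r+\frac{2}{n}(\mathbf{X}_{1:r}^T\mathbf{X}_{1:r}\hat\beta)_r=0$, and $\hat z\in\partial P_r(\hat\beta)$. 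I then construct a witness $(\tilde\beta,\tilde z)$ by solving the oracle restricted problem over the coordinates in $T$ with $\tilde\beta_N=0$, and extending $\tilde z$ to $N$ via the stationarity equations. Because the restricted Hessian $\frac{2}{n}\mathbf{X}_T^T\mathbf{X}_T+\diag(0,\dots,0,2/\tilde\beta_r^2)$ is positive definite whenever $\mathbf{X}_T$ has full column rank (guaranteed with probability one once $|T|=K_r+1\le n$ under Assumption \ref{agaussian}), the oracle solution is unique; the remaining task is to certify that this witness is optimal for the full problem.

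The crux is strict dual feasibility on $N$, and this is the step I expect to be the main obstacle. Two features of the hierarchical penalty must be tracked. First, because the nested groups $g_{r,\ell}$ overlap, a support coordinate $j\in\I_r$ belongs to every group $g_{r,\ell}$ with $\ell\ge j$, so its induced subgradient can be as large as $\sum_{\ell\ge j}w_{\ell j}=\sum_{k\ge 1}k^{-2}=\pi^2/6$ in magnitude; hence $\|\tilde z_T\|_\infty\le\pi^2/6$ rather than the usual bound of $1$. Second, on each zero coordinate the active (determined) groups contribute nothing, so the subgradient there is supplied entirely by the inactive groups $g_{r,\ell}$, $\ell\le J_r$, whose freedom can be exploited. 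I would decompose the smooth gradient on $N$ as $\frac{2}{n}\mathbf{X}_N^Te+\frac{2}{n}\mathbf{X}_N^T\mathbf{X}_T\Delta_T$ with $\Delta_T=\tilde\beta_T-\beta^\star_T$, and substitute the expression for $\Delta_T$ obtained by inverting the restricted stationarity equations. This yields a noise term plus an ``irrepresentable'' term of the form $\Sigma_{N\I_r}(\Sigma_{\I_r\I_r})^{-1}\tilde z_{\I_r}$ (up to sample-covariance fluctuations). Bounding the latter by $\vertiii{\Sigma_{N\I_r}(\Sigma_{\I_r\I_r})^{-1}}_\infty\cdot\frac{\pi^2}{6}$ and invoking Assumption \ref{airrepre} is precisely what the factor $6/\pi^2$ is designed to absorb, leaving strict slack of order $\alpha$. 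The difficulty here is twofold: the overlapping-group subgradient bookkeeping, and the fact that the $-2\log\beta_r$ term makes the restricted stationarity nonlinear in $\tilde\beta_r$, so the diagonal coordinate must be profiled out carefully and its contribution controlled through the noise variance $\theta_r^{(r)}=1/L_{rr}^2$ and Assumption \ref{abddsval}.

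Once strict dual feasibility holds, standard PDW reasoning forces every optimizer of \eqref{est:subproblem} to have support in $T$, and the strict convexity of the restricted problem gives $\hat\beta=\tilde\beta$ as the unique solution with $\hat K_r\le K_r$, proving part 1. For part 2 I would return to the restricted stationarity equations, solve for $\Delta_T=\hat\beta_T-\beta^\star_T$ as $(\frac{1}{n}\mathbf{X}_T^T\mathbf{X}_T)^{-1}$ applied to a noise/subgradient vector, and pass the $\ell_\infty$ norm through using $\vertiii{(\Sigma_{\I_r\I_r})^{-1}}_\infty$, $\|\tilde z_T\|_\infty\le\pi^2/6$, and Assumption \ref{abddsval}; the stated bound $\lambda\bigl(4\vertiii{(\Sigma_{\I_r\I_r})^{-1}}_\infty+5\kappa^2\bigr)$ emerges after the diagonal/log-det contribution and sample-covariance fluctuations are folded into the $5\kappa^2$ term. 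Part 3 is then immediate: if every nonzero $|L_{rj}|$ exceeds the $\ell_\infty$ error bound \eqref{theory:betamin}, no true nonzero can be zeroed or sign-flipped, while the already-established $\hat\beta_N=0$ handles the zero coordinates, giving exact signed support recovery.

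Finally, every ``with high probability'' claim reduces to a few concentration events: (i) $\frac{1}{n}\mathbf{X}_T^T\mathbf{X}_T$ close to $\Sigma_{TT}$ in operator norm, so the restricted Hessian is well conditioned and invertible; (ii) $\frac{1}{n}\mathbf{X}_N^T\mathbf{X}_T(\frac{1}{n}\mathbf{X}_T^T\mathbf{X}_T)^{-1}$ close to $\Sigma_{N\I_r}(\Sigma_{\I_r\I_r})^{-1}$; and (iii) a union bound over the $J_r$ noise coordinates for the Gaussian terms $\frac{1}{n}\mathbf{X}_m^Te$, each of order $\sqrt{\theta_r\log r/n}$, which is what motivates the choice $\lambda=\frac{8}{\alpha}\sqrt{\theta_r\log r/n}$ in \eqref{theory:lambdar}. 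Combining these by a union bound, with the sample-size requirement \eqref{theory:samplesizer} ensuring the fluctuations are dominated, yields the stated probability $1-c_1\exp\{-c_2\min(K_r,\log J_r)\}-7\exp(-c_3 n)$.
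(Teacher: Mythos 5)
Your proposal follows essentially the same route as the paper: a primal--dual witness construction in which the oracle restricted problem is solved over the true support, the dual variables are extended to the inactive groups via stationarity, strict dual feasibility is verified by splitting the gradient on $\{1,\dots,J_r\}$ into a deterministic irrepresentable term (bounded using the $\pi^2/6$ overlapping-group subgradient accumulation that Assumption \ref{airrepre} is calibrated to absorb) and a conditionally Gaussian noise term controlled by concentration, with the diagonal coordinate profiled out of the nonlinear stationarity equation and the $\ell_\infty$ bound and signed support recovery following from the restricted system plus the $\beta_{\min}$ condition. This matches the paper's argument in structure and in all the key technical ingredients.
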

\begin{proof}
  See Appendix \ref{online:proofofrow}.
\end{proof}
In the classical setting where the ambient dimension $r$ is fixed and the
sample size $n$ is allowed to go to infinity, $\lambda \rightarrow 0$ and
the above scaling requirement is satisfied. By \eqref{theory:infinityboundrow}
the row estimator $\hat{L}_{r\cdot}$ is consistent as is the classical maximum
likelihood estimator. Moreover, it recovers the true support since 
\eqref{theory:betamin} holds automatically.
In high-dimensional scaling, however, both $n$ and $r$ are allowed to change,
and we are interested in the case where $r$ can grow much faster than $n$. 
Theorem \ref{thm:row} shows that, if 
$\vertiii{\left(\Sigma_{\mathcal{I}_r\mathcal{I}_r}\right)^{-1}}_\infty =
\mathcal{O}(1)$ and if $n$ can grow as fast as $K_r\log J_r$, 
then the row estimator $\hat{L}_{r\cdot}$ still recovers the exact support
of $L_{r\cdot}$ when the signal is at least 
$\mathcal O (\sqrt{\frac{\log r}{n}})$ in size, 
and the estimation error $\max_j |\hat{L}_{rj} - L_{rj}|$ is
$\mathcal{O}(\sqrt{\frac{\log r}{n}})$.
Intuitively, for the row estimator to detect 
the true support, we require that the true signal be sufficiently large. The
condition \eqref{theory:betamin} imposes limitations on how fast the 
signal is allowed to decay, 
which is the analogue to the commonly known
``$\beta_{\min}$ condition'' that is assumed for establishing support recovery of the lasso.

\begin{remark} \label{rem:adaptivity}
  Both the choice of tuning parameter \eqref{theory:lambdar} and the
  error bound \eqref{theory:infinityboundrow} depend on the true covariance matrix
  via $\theta_r$. This quantity can be bounded by $\kappa^2$ as
  in \eqref{theory:reseigenvalue} using the fact that 
  $\left( \Sigma_{\mathcal{I}_r\mathcal{I}_r} \right)^{-1}$ is positive definite:
  \begin{align}
    \theta_r = \max_{\ell \in \I_r^c} \theta_r^{(\ell)} = 
    \max_{\ell \in \I_r^c}
    \left\{ \Sigma_{\ell\ell} - \Sigma_{\ell \mathcal{I}_r}
    \left( \Sigma_{\mathcal{I}_r\mathcal{I}_r} \right)^{-1} 
    \Sigma_{\mathcal{I}_r \ell} \right\}\leq
    \max_{\ell \in \I_r^c} \Sigma_{\ell\ell} \leq \kappa^2.
    \nonumber
  \end{align}
  The proof of Theorem \ref{thm:row} shows that the results in this theorem
  still hold true if we replace $\theta_r$ by $\kappa^2$. 
  This observation leads to the fact that we can select a
  tuning parameter having the properties of the theorem that does not
  depend on the unknown sparsity level $K_r$.
  Therefore, our estimator is
  adaptive to the underlying unknown bandwidths.
\end{remark}

\subsubsection{Connections to the regression setting} \label{subsubsec:regression}
In \eqref{eq:regression} we showed that estimation of the $r$-th row of $L$ can be interpreted as a
regression of $X_r$ on its predecessors.
It is thus very interesting to compare Theorem \ref{thm:row} to the standard high-dimensional regression results.
Consider the following linear model of a vector $\mathbf{y} \in \real^n$ of the form
\begin{align}
  \mathbf{y} = \mathbf{Z} \eta + \mathbf{\omega} \quad \quad \quad \omega \sim N(\mathbf{0}, \sigma^2 I_n)
  \label{est:linearmodel}
\end{align}
where $\eta \in \real^p$ is the unknown but fixed parameter to estimate, $\mathbf{Z} \in \real^{n\times p}$ is the design matrix
with each row an observation of $p$ predictors, $\sigma^2$ is the variance of the zero-mean additive noise $\mathbf{\omega}$.
A standard approach in the high-dimensional setting where $p \gg n$ is the lasso \citep{tibshirani1996regression}, which solves the
convex optimization problem,
\begin{align}
  \min_{\eta \in \real^p} \frac{1}{2n} \norm{\mathbf{y} - \mathbf{Z} \eta}_2^2 + \lambda \norm{\eta}_1,
  \label{est:lasso}
\end{align}
where $\lambda > 0$ is a regularization parameter.
In the setting where $\eta$ is assumed to be sparse, the lasso solution is known to be able to successfully recover
the signed support of the true $\eta$ with high probability when $\lambda$ is of the scale $\sigma\sqrt{\frac{\log p}{n}}$
and certain technical conditions are satisfied \citep{wainwright2009sharp}.

Despite the added complications of working with the $\log$ term in the
objective of \eqref{est:subproblem},
Theorem \ref{thm:row} gives a clear indication that, in terms
of difficulty of support recovery, the row estimate problem
\eqref{est:subproblem} is essentially the same as a lasso problem with random design, i.e., with each row $z_i \sim N(\mathbf{0}, \Sigma)$ \citep[Theorem 3,][]{wainwright2009sharp}. 
Indeed, a comparison shows that the two irrepresentable conditions are equivalent.
Moreover, $\theta_r$ plays the 
same role as \citet{wainwright2009sharp}'s $\max_{i} \left(\Sigma_{S^cS^c} - \Sigma_{S^cS}\left(\Sigma_{SS}\right)^{-1}\Sigma_{SS^c}\right)_{ii}$, a threshold constant of the conditional covariance, where $S$ is the support of the true $\eta$.

\citet{st2010l1} introduce an alternative approach to the lasso, in the context of penalized mixture regression models,
that solves the optimization problem,
\begin{align}
  (\hat{\phi}, \hat{\rho}) = \argmin_{\phi, \rho} \left\{ -2\log \rho + \frac{1}{n} \norm{\rho \mathbf{y} + \mathbf{Z} \phi}_2^2 + \lambda \norm{\phi}_1 \right\},
  \label{est:repa}
\end{align}
where $\hat{\sigma} = \hat{\rho}^{-1}$ and $\hat{\eta} = -\hat{\phi} / \hat{\rho}$.
Note that \eqref{est:repa} basically coincides with \eqref{est:subproblem} except for the penalty.

In \citet{st2010l1}, the authors study the asymptotic and non-asymptotic 
properties of the $\ell_1$-penalized estimator for the general mixture regression models where the loss functions are non-convex.
The theoretical properties of \eqref{est:repa} are studied in \citet{sun2010comments}, which partly motivates the scaled lasso \citep{Sun25092012}.

The theoretical work of \citet{sun2010comments} differs from ours both in that they study the $\ell_1$ penalty (instead of the hierarchical group lasso) and in their assumptions.
The nature of our problem requires the sample matrix to be random (as in \ref{agaussian}), while
\citet{sun2010comments} considers the fixed design setting, which does not apply in our context.
Moreover, they provide prediction consistency and a deviation bound of the regression parameters estimation in $\ell_1$ norm.
We give exact signed support recovery results for the regression parameters as well as estimation deviation bounds in various norm criteria.
Also, they take an asymptotic point of view while we give finite sample results.

\subsection{Matrix Bandwidth Recovery Result} \label{subsec:matrix}
With the properties of the row estimators in place, we are ready to
state results about estimation of the matrix $L$.  
The following
theorem gives an analogue to Theorem \ref{thm:row} in the matrix setting. Under 
similar conditions, with one particular choice of tuning parameter, the estimator
recovers the true bandwidth for all rows adaptively with high probability.
\begin{theorem}
  \label{thm:matrix}
  Let $\theta = \max_{r} \theta_r$ and
  $K = \max_{r} K_r$, and take
  \begin{align}
    \lambda = \frac{8}{\alpha} \sqrt{\frac{2\theta \log p}{n}} 
    \label{theory:lambdaL}
  \end{align}
  and weights given by \eqref{est:generalweight}.
  Under Assumptions \ref{agaussian}--\ref{abddsval}, if $\left( n, p, K \right)$ satisfies
  \begin{align}
    n> \alpha^{-2}
    \theta \kappa^2\left( 12 \pi^2 K + 32 \right)\log p ,
    \label{theory:samplesizeL}
  \end{align}
  then with probability greater than $1- cp^{-1}$
  for some constant $c$ independent of $n$ and $p$, the following properties hold:
  \begin{enumerate}
    \item The estimator $\hat{L}$ is unique,
      and it is at least as sparse as $L$, i.e.,  $\hat{K}_r \leq K_r$ for all $r$.
    \item The estimator $\hat{L}$ satisfies the element-wise $\ell_\infty$
      bound,
      \begin{align}
        \norm{\hat{L} - L}_\infty \leq \lambda\left(4 \max_r 
        \vertiii{\left(\Sigma_{\mathcal{I}_r\mathcal{I}_r}\right)^{-1}}_\infty 
        + 5 \kappa^2 \right).
        \label{theory:infinityboundL}
      \end{align}
    \item If in addition, 
      \begin{align}
        \min_{r} \min_{j \geq J_r+1}\left|L_{rj}\right| > \lambda\left(4 \max_r
        \vertiii{\left(\Sigma_{\mathcal{I}_r\mathcal{I}_r}\right)^{-1}}_\infty 
        + 5 \kappa^2 \right),
        \label{theory:betaminL}
      \end{align}
      then exact signed support recovery holds: $\mathrm{sign}( \hat{L}_{rj}) = 
      \mathrm{sign} ( L_{rj} )$ for all $r$ and $j$.
  \end{enumerate}
\end{theorem}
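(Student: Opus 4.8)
The plan is to reduce Theorem~\ref{thm:matrix} to Theorem~\ref{thm:row} via the decoupling established in Appendix~\ref{online:decouple}. Since \eqref{est:estimator} separates into the $p$ independent row problems \eqref{est:subproblem}, the matrix estimator $\hat L$ is unique exactly when every row estimator $\hat L_{r\cdot}$ is unique, it satisfies $\hat K_r \le K_r$ for all $r$ precisely when each row does, and its element-wise $\ell_\infty$ error and its signed support are simply the rowwise aggregates of the corresponding row quantities. Thus each of the three conclusions of Theorem~\ref{thm:matrix} is obtained by applying the matching conclusion of Theorem~\ref{thm:row} to every row and then combining. In particular, \eqref{theory:infinityboundL} is the maximum over $r$ of the rowwise bounds \eqref{theory:infinityboundrow}, and \eqref{theory:betaminL} is the simultaneous version of \eqref{theory:betamin}; this is exactly why the matrix statement replaces $\vertiii{(\Sigma_{\I_r\I_r})^{-1}}_\infty$ by $\max_r\vertiii{(\Sigma_{\I_r\I_r})^{-1}}_\infty$, $\theta_r$ by $\theta = \max_r\theta_r$, and $K_r$ by $K = \max_r K_r$.

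First I would reconcile the single tuning parameter \eqref{theory:lambdaL} with the rowwise choice \eqref{theory:lambdar}. Because $\theta \ge \theta_r$ and $\log p \ge \log r$ for every $r \le p$, the matrix tuning parameter dominates each rowwise one, $\lambda = \frac{8}{\alpha}\sqrt{\frac{2\theta\log p}{n}} \ge \sqrt 2\,\frac{8}{\alpha}\sqrt{\frac{\theta_r\log r}{n}}$, and likewise \eqref{theory:samplesizeL} (whose constants $12\pi^2 = 4\cdot 3\pi^2$ and $32 = 4\cdot 8$, with $\log p$ in place of $\log J_r$, are the matrix analogues of those in \eqref{theory:samplesizer}) implies the rowwise sample-size requirement \eqref{theory:samplesizer} for every $r$ simultaneously. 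I would then re-run the lemmas of Appendix~\ref{online:sparserow} behind Theorem~\ref{thm:row} with $\theta_r$ and $\log r$ replaced throughout by $\theta$ and $2\log p$: the sparsity-and-uniqueness conclusion only needs $\lambda$ to be large enough and so survives the inflation, the $\ell_\infty$ bound holds verbatim with this larger $\lambda$, and \eqref{theory:betaminL} is precisely the resulting threshold stated uniformly in $r$.

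The crux is the probability accounting. To bring the total failure probability down to $c\,p^{-1}$, a union bound over the $p$ rows requires each row to fail with probability $\mathcal O(p^{-2})$, so that summing over $r$ absorbs exactly one power of $p$. The replacement of $\log r$ by $\log p$ together with the extra factor $\sqrt 2$ is calibrated so that, after the Gaussian maximal inequality over the $\mathcal O(p)$ coordinates inspected in each row, the per-row tails from the proof of Theorem~\ref{thm:row} indeed decay like $p^{-2}$; the inflated sample-size bound \eqref{theory:samplesizeL} forces $n$ to exceed a large multiple of $\log p$, which simultaneously renders the $7\exp(-c_3 n)$ term of Theorem~\ref{thm:row} of order $p^{-2}$.

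I expect the main obstacle to be the term appearing in Theorem~\ref{thm:row} as $c_1\exp\{-c_2\min(K_r,\log J_r)\}$. Since $K_r$ may be small for some rows, $\min(K_r,\log J_r)$ cannot be bounded below by a multiple of $\log p$, so this contribution is not automatically $\mathcal O(p^{-2})$. I would resolve this by re-deriving that concentration step with the $\log p$-scaled tuning parameter, so that its exponent is driven by $\log p$ uniformly in $r$ rather than by the row-specific $\min(K_r,\log J_r)$, and by treating the rows with small $K_r$ directly through \eqref{theory:samplesizeL}. Summing the three $\mathcal O(p^{-2})$ contributions over the $p$ rows then yields the stated $1 - c\,p^{-1}$ probability and completes the proof.
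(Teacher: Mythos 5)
Your proposal follows essentially the same route as the paper's proof in Appendix \ref{online:proofofmatrix}: the argument there re-runs the primal--dual witness construction of Theorem \ref{thm:row} for every row with the single tuning parameter \eqref{theory:lambdaL}, uses Assumption \ref{airrepre} to bound the deterministic parts $F^{(r\ell)}$ uniformly by $1-\alpha$, and takes a union bound over the $O(p^2)$ dual coordinates, with the extra factor of $2$ under the square root in \eqref{theory:lambdaL} and the enlarged constants in \eqref{theory:samplesizeL} calibrated exactly as you describe so that each tail decays fast enough in $p$.

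One caveat concerns the obstacle you single out. You are right that the term $c_1\exp\{-c_2\min(K_r,\log J_r)\}$ from Theorem \ref{thm:row} does not union-bound to $O(p^{-1})$ when some $K_r$ are small. However, your proposed remedy --- re-deriving that step with the $\log p$-scaled $\lambda$ so that ``its exponent is driven by $\log p$ uniformly in $r$'' --- does not work as stated: that term arises, in the proof of the $\ell_\infty$ bound, from Lemma 5 of \citet{wainwright2009sharp} applied to the $K_r$-dimensional Wishart deviation $(n^{-1}W_{\I_r}^TW_{\I_r})^{-1}-I$, and its exponent $\min\{K_r,\log J_r\}$ is a property of that random matrix alone; it is completely insensitive to the size of the tuning parameter. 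The paper's own Appendix \ref{online:proofofmatrix} carries out the union bound explicitly only for Property 1 and is silent about this term, so your instinct that the $\ell_\infty$ part needs separate treatment is sound, but the repair would have to come from a different concentration inequality (for instance one whose exponent scales with $n$), not from rescaling $\lambda$.
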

\begin{proof}
  See Appendix \ref{online:proofofmatrix}.
\end{proof}
As discussed in Remark \ref{rem:adaptivity}, we can replace $\theta$ with its upper
bound $\kappa^2$, and the results remain true. This theorem shows that
one can properly estimate
the sparsity pattern across all rows exactly using only one tuning parameter chosen
without any prior knowledge of the true bandwidths. 
In Section \ref{subsubsec:regression}, we noted that
the conditions required for support recovery and
the element-wise $\ell_\infty$ error bound for estimating a row of $L$ is
similar to those of the lasso in the regression
setting. A union bound argument allows us to translate this into exact
bandwidth recovery in the matrix setting and to derive a reasonable convergence rate 
under conditions as mild as that of a lasso problem
with random design. This technique is similar in spirit to
neighborhood selection \citep{meinshausen2006high}, 
though our approach is likelihood-based.

Comparing \eqref{theory:samplesizeL} to \eqref{theory:samplesizer}, 
we see that the sample size
requirement for recovering $L$ is determined by the least sparse row.
While intuitively one would expect the matrix problem to be harder than any single row problem,
we see that in fact the
two problems are basically of the same difficulty (up to a multiplicative constant).

In the setting where variables exhibit a natural ordering, \citet{shojaie2010penalized} proposed a penalized
likelihood framework like ours to estimate the structure of directed acyclic graphs (DAGs). Their
method focuses on variables which are standardized to have unit variance.
In this special case, penalized likelihood does not involve the log-determinant term and under similar assumptions to ours, they
proved support recovery consistency. However, they use lasso and adaptive lasso \citep{zou2006adaptive} penalties,
which do not have the built-in notion of local dependence. Since these $\ell_1$-type penalties do not induce structured sparsity in the Cholesky factor, the resulting
precision matrix estimate is not necessarily sparse. 
By contrast, our method does not assume unit variances and learns an adaptively banded structure for $\hat L$ that leads 
to a sparse $\hat\Omega$ (thereby encoding conditional dependencies).

To study the difference between the ordered and non-ordered problems,
we compare our method
with \cite{ravikumar2011high}, who studied the graphical lasso estimator 
in a general setting where variables are not necessarily ordered. 
Let $\mathcal S$ index the edges of the graph specified by the sparsity pattern of $\Omega = \Sigma^{-1}$. 
The sparsity recovery result and convergence rate are established under an
irrepresentable condition imposed on $\Gamma = \Sigma \otimes \Sigma \in \real^{p^2 \times p^2}$:
\begin{align}
  \max_{e \in \mathcal S^{c}} \norm{\Gamma_{e \mathcal S} 
  \left( \Gamma_{\mathcal S \mathcal S} \right)^{-1}}_1 \leq \left(
  1-\alpha \right)
  \label{eq:glasso-irrep}
\end{align}
for some $\alpha \in (0,1]$. Our Assumption \ref{airrepre} is on each
variable through the entries of the true covariance $\Sigma$ while
\eqref{eq:glasso-irrep} imposes  
such a condition on the edge variables 
$Y_{(j,k)} = X_j X_k - \E\left( X_j X_k \right)$, resulting in a vector $\ell_1$-norm 
restriction on a much larger matrix $\Gamma$, which can be 
more restrictive for large $p$. 
More specifically, condition \eqref{eq:glasso-irrep} 
arises in \citet{ravikumar2011high} 
to tackle the analysis of 
the $\log \det \Omega$ term in the graphical lasso problem.
By contrast, in our setting the parameterization in terms of $L$ means
that the $\log\det$ term is simply a sum of $\log$ terms 
on diagonal 
elements and is thus easier to deal with, leading to the milder irrepresentable
assumption. 
Another difference is 
that they require the sample size 
$n > c \kappa_{\Gamma}^2 d^2 \log p$
for some constant $c$. The quantity $d$ measures the maximum 
number of non-zero elements in each row of the true $\Sigma$, 
which in our case is $2K+1$, and 
$\kappa_{\Gamma} = \vertiii{\left(\Gamma_{\mathcal S \mathcal S} \right)^{-1}}_\infty$
can be much larger than $\kappa^2$. Thus, comparing to 
\eqref{theory:samplesizeL}, one finds that their sample size requirement is much more restrictive.
A similar comparison could also be made with the lasso penalized D-trace estimator \citep{zhang2014sparse}, whose
irrepresentable condition involves $\Gamma = (\Sigma \otimes I + I \otimes \Sigma)/2 \in \real^{p^2 \times p^2}$.
Of course, the results in both \citet{ravikumar2011high} and \citet{zhang2014sparse}
apply to estimators invariant to permutation of variables;
additionally, the random vector only needs to satisfy an exponential-type tail condition. 

\subsection{Precision Matrix Estimation Consistency} \label{subsec:consistency}
Although our primary target of interest is $L$, the parameterization $\Omega = L^TL$
makes it natural for us to try to connect our results of estimating $L$ with the vast literature in directly estimating $\Omega$,
which is the standard estimation target when the known ordering is not available.
In this section, we consider the estimation consistency of $\Omega$ using
the results we obtained for $L$.
The following theorem gives results of
how well $\hat{\Omega} = \hat{L}^T\hat{L}$
performs in estimating the true precision matrix $\Omega=L^TL$ 
in terms of various matrix norm criteria. 
\begin{theorem} \label{thm:Omegabounds}
  Let $\theta = \max_r \theta_r$, $K = \max_r K_r$ and $s = \sum_r K_r$ 
  denote the total number of non-zero off-diagonal elements in $L$.
  Define $\zeta_\Sigma = \frac{8\sqrt{2\theta}}{\alpha}\left( 4 \max_r \vertiii{\left( \Sigma_{\mathcal I_r \mathcal I_r} \right)^{-1}}_\infty + 5\kappa^2\right)$.
  Under the assumptions in Theorem \ref{thm:matrix},
  the following deviation bounds hold with probability greater than $1 - cp^{-1}$ for some constant $c$ independent of $n$ and $p$:
  \begin{align}
    \norm{\hat{\Omega} - \Omega}_\infty &\leq
    2\zeta_\Sigma \vertiii{L}_\infty \sqrt{\frac{\log p}{n}} + \zeta_\Sigma^2 \left( K + 1 \right) \frac{\log p}{n},
    \nonumber\\
    \vertiii{\hat{\Omega} - \Omega}_\infty &\leq
    2\zeta_\Sigma \vertiii{L}_\infty \left( K + 1 \right) \sqrt{\frac{\log p}{n}} + \zeta_\Sigma^2\left( K + 1 \right)^2  \frac{\log p}{n},
    \nonumber\\
    \vertiii{\hat{\Omega} - \Omega}_2 &\leq
    2 \zeta_\Sigma \vertiii{L}_\infty \left( K + 1 \right) \sqrt{\frac{\log p}{n}} + \zeta_\Sigma^2 \left( K + 1 \right)^2 \frac{\log p}{n},
    \nonumber\\
    \norm{\hat{\Omega} - \Omega}_F &\leq
    2 \kappa \zeta_\Sigma \sqrt{\frac{\left( s + p \right)\log p}{n}} + \zeta_\Sigma^2 \left( K + 1 \right)\sqrt{s + p} \frac{\log p}{n}.
    \nonumber
  \end{align}
\end{theorem}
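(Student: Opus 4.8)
The plan is to reduce everything to the single elementwise bound on $\hat L$ already supplied by Theorem \ref{thm:matrix}, together with the banded support of the Cholesky factor. Write $\Delta = \hat L - L$ and set $\delta = \zeta_\Sigma\sqrt{\log p / n}$, so that \eqref{theory:infinityboundL} reads $\norm{\Delta}_\infty \le \delta$ on the high-probability event of Theorem \ref{thm:matrix}. The whole argument then rests on the exact identity
\begin{align}
  \hat\Omega - \Omega = \hat L^T\hat L - L^T L = L^T\Delta + \Delta^T L + \Delta^T\Delta,
  \nonumber
\end{align}
after which each of the four norms is obtained by bounding the two mutually transpose cross terms and the quadratic term separately and applying the triangle inequality.

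Second, I would record the structural facts that drive every bound. On the event of Theorem \ref{thm:matrix} we have $\hat K_r \le K_r$, so both $L$ and $\Delta$ are lower triangular with row $r$ supported on $\I_r\cup\{r\}$; hence each row of $\Delta$ has at most $K+1$ nonzeros, each column likewise, and $\Delta$ has at most $s+p$ nonzeros in total. Combined with $\norm{\Delta}_\infty\le\delta$ this gives $\vertiii{\Delta}_1,\vertiii{\Delta}_\infty \le (K+1)\delta$, hence $\vertiii{\Delta}_2\le\sqrt{\vertiii{\Delta}_1\vertiii{\Delta}_\infty}\le(K+1)\delta$, and $\norm{\Delta}_F\le\sqrt{s+p}\,\delta$. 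I would also use $\vertiii{L}_2=\sigma_{\max}(L)\le\kappa$ from Assumption \ref{abddsval}, and the submultiplicative inequalities $\norm{AB}_\infty\le\vertiii{A}_\infty\norm{B}_\infty$, $\norm{AB}_F\le\vertiii{A}_2\norm{B}_F$, together with submultiplicativity of the induced operator norms.

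The four bounds then fall out from the same template. The Frobenius bound is cleanest: $\norm{L^T\Delta}_F\le\vertiii{L}_2\norm{\Delta}_F\le\kappa\sqrt{s+p}\,\delta$ for each cross term and $\norm{\Delta^T\Delta}_F\le\vertiii{\Delta}_2\norm{\Delta}_F\le(K+1)\sqrt{s+p}\,\delta^2$ for the quadratic term, reproducing the stated expression after substituting $\delta$. For the operator $\ell_\infty$ bound I would bound the quadratic term by $\vertiii{\Delta^T\Delta}_\infty\le\vertiii{\Delta}_1\vertiii{\Delta}_\infty\le(K+1)^2\delta^2$ and each cross term by submultiplicativity and the per-row/column nonzero counts, at the $\vertiii{L}_\infty(K+1)\delta$ scale; here I would exploit that $\hat\Omega-\Omega$ is symmetric, so its $\vertiii{\cdot}_1$ and $\vertiii{\cdot}_\infty$ norms coincide and the two cross terms can be matched. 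The spectral bound is then immediate via $\vertiii{M}_2\le\sqrt{\vertiii{M}_1\vertiii{M}_\infty}=\vertiii{M}_\infty$ for symmetric $M$, which explains why the second and third displayed bounds are identical. Finally the elementwise $\ell_\infty$ bound follows the same route with sharper counting: the quadratic term contributes $\norm{\Delta^T\Delta}_\infty\le(K+1)\delta^2$ (each inner product runs over the at-most-$K+1$ common nonzero rows), and the cross terms contribute at the $\vertiii{L}_\infty\delta$ scale.

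The main obstacle is the bookkeeping in the cross terms rather than any deep inequality: one must track exactly which operator norm of $L$ (row-sum versus column-sum) is produced by each of $L^T\Delta$ and $\Delta^T L$, and use the banded support of $\Delta$ to pass from an elementwise bound to an operator-norm bound without losing more than a factor of $K+1$. The symmetry of $\hat\Omega-\Omega$ is the essential device that lets the two cross terms combine into a clean $2\vertiii{L}_\infty$ factor and that collapses the spectral bound onto the operator-$\infty$ bound; getting the quadratic term's nonzero count right (per-entry versus per-row) is what separates the elementwise constants from the operator-norm ones.
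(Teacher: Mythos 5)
Your proposal is correct and follows essentially the same route as the paper: the identity $\hat\Omega-\Omega=L^T\Delta+\Delta^T L+\Delta^T\Delta$, intermediate bounds $\vertiii{\Delta}_1,\vertiii{\Delta}_\infty\le(K+1)\delta$, $\vertiii{\Delta}_2\le\sqrt{\vertiii{\Delta}_1\vertiii{\Delta}_\infty}$, and $\norm{\Delta}_F\le\sqrt{s+p}\,\delta$ derived from the support containment and elementwise bound of Theorem \ref{thm:matrix}, followed by the triangle inequality and submultiplicativity (the paper packages the intermediate bounds as Lemma \ref{cor:L2Fbounds} and handles the Frobenius cross term via the Kronecker/vec identity, which is equivalent to your $\norm{L^T\Delta}_F\le\vertiii{L}_2\norm{\Delta}_F$). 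The row-sum versus column-sum bookkeeping you flag in the cross terms is handled no more carefully in the paper itself, so there is no gap relative to its argument.
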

When the quantities $\zeta_\Gamma$, $\vertiii{L}_\infty$, and $\kappa$ are treated as constants, 
these bounds can be summarized more succinctly as follows:
\begin{proof}
  See Appendix \ref{online:proofofrates}.
\end{proof}

\begin{corollary} \label{cor:Omegabounds}
  Using the notation and conditions in Theorem \ref{thm:Omegabounds},
  if $\zeta_\Gamma$, $\vertiii{L}_\infty$, and $\kappa$ remain constant, then the scaling $(K + 1)^2 \log p = o(n)$ is
  sufficient to guarantee the following estimation error bounds:
  \begin{align}
    \norm{\hat{\Omega} - \Omega}_\infty &=
    \mathcal O_P \left(\sqrt{\frac{\log p}{n}} \right),
    \nonumber\\
    \vertiii{\hat{\Omega} - \Omega}_\infty &=
    \mathcal O_P\left( (K + 1) \sqrt{\frac{\log p}{n}} \right),
    \nonumber\\
    \vertiii{\hat{\Omega} - \Omega}_2 &=
    \mathcal O_P\left( (K + 1) \sqrt{\frac{\log p}{n}} \right),
    \nonumber\\
    \norm{\hat{\Omega} - \Omega}_F &=
    \mathcal O_P\left(
    \sqrt{\frac{	\left( s+p \right) \log p}{n}} \right).
    \nonumber
  \end{align}
\end{corollary}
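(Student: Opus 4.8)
The plan is to treat Theorem~\ref{thm:Omegabounds} as an essentially \emph{deterministic} consequence of Theorem~\ref{thm:matrix}: I would work on the same event of probability at least $1-cp^{-1}$ on which Theorem~\ref{thm:matrix} already guarantees that $\hat L$ is unique, that $\hat K_r\le K_r$ for every $r$, and that the element-wise bound \eqref{theory:infinityboundL} holds. Rewriting the constant in \eqref{theory:infinityboundL} through the definition of $\zeta_\Sigma$, the two inputs I would extract are: (i) the error matrix $\Delta:=\hat L-L$ is lower triangular with its $r$-th row supported on $\I_r$, so $\Delta$ inherits the banded structure of $L$ and each of its rows and columns has at most $K+1$ nonzeros; and (ii) $\norm{\Delta}_\infty\le\zeta_\Sigma\sqrt{\log p/n}$. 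After this, no further probabilistic analysis is needed and everything reduces to matrix-norm bookkeeping.

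The algebraic core is the telescoping identity
\[
  \hat\Omega-\Omega=\hat L^T\hat L-L^TL=L^T\Delta+\Delta^TL+\Delta^T\Delta,
\]
which splits the error into a linear part $L^T\Delta+\Delta^TL$ (order $\sqrt{\log p/n}$) and a quadratic part $\Delta^T\Delta$ (order $\log p/n$). Applying the triangle inequality in each of the four norms, and using that transposition is an isometry for the element-wise $\ell_\infty$, Frobenius, and spectral norms — together with the symmetry of $\hat\Omega-\Omega$, which gives $\vertiii{\cdot}_\infty=\vertiii{\cdot}_1$ in the operator-$\infty$ bound — produces the factor of $2$ on the linear part, so it remains to bound $\norm{L^T\Delta}$ and $\norm{\Delta^T\Delta}$ in each norm. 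To convert the element-wise control of $\Delta$ into operator and Frobenius norms I would use submultiplicativity together with the band: since every row and column of $\Delta$ has at most $K+1$ nonzeros, $\vertiii{\Delta}_\infty,\vertiii{\Delta}_1\le(K+1)\norm{\Delta}_\infty$ and hence $\vertiii{\Delta}_2\le\sqrt{\vertiii{\Delta}_1\vertiii{\Delta}_\infty}\le(K+1)\norm{\Delta}_\infty$, while $\norm{\Delta}_F\le\sqrt{s+p}\,\norm{\Delta}_\infty$ because $\Delta$ has at most $s+p=\sum_r(K_r+1)$ nonzeros in total. The matching factor contributed by $L$ is an operator norm of $L$: $\vertiii{L}_\infty$ for the element-wise and operator bounds, and $\vertiii{L}_2=\sigma_{\max}(L)\le\kappa$ (Assumption~\ref{abddsval}) for the Frobenius bound.

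Carrying this out norm by norm yields the stated displays. For the element-wise $\ell_\infty$ bound the only subtlety is the quadratic term: an entry $(\Delta^T\Delta)_{jk}=\sum_m\Delta_{mj}\Delta_{mk}$ has at most $K+1$ nonzero summands by the band, so $\norm{\Delta^T\Delta}_\infty\le(K+1)\norm{\Delta}_\infty^2$, which is exactly where the $(K+1)$ appears, whereas the linear part carries no bandwidth factor. For the operator-$\infty$ bound the bandwidth enters linearly in both pieces through $\vertiii{\Delta}_1,\vertiii{\Delta}_\infty\le(K+1)\norm{\Delta}_\infty$, giving $(K+1)$ on the linear term and $(K+1)^2$ on the quadratic term. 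The spectral bound is then immediate and identical, since $\hat\Omega-\Omega$ is symmetric and therefore $\vertiii{\hat\Omega-\Omega}_2\le\sqrt{\vertiii{\hat\Omega-\Omega}_1\vertiii{\hat\Omega-\Omega}_\infty}=\vertiii{\hat\Omega-\Omega}_\infty$. The Frobenius bound is handled separately, using $\norm{L^T\Delta}_F\le\vertiii{L}_2\norm{\Delta}_F\le\kappa\sqrt{s+p}\,\norm{\Delta}_\infty$ and $\norm{\Delta^T\Delta}_F\le\vertiii{\Delta}_2\norm{\Delta}_F\le(K+1)\sqrt{s+p}\,\norm{\Delta}_\infty^2$. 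Substituting $\norm{\Delta}_\infty\le\zeta_\Sigma\sqrt{\log p/n}$ everywhere finishes the argument.

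I expect the main obstacle to be organizational rather than conceptual: keeping the bandwidth and total-sparsity factors honest across the four norms so that the powers of $(K+1)$ and the factor $\sqrt{s+p}$ come out exactly as claimed, and making sure each norm of $L$ is taken dual to the one used for $\Delta$ via submultiplicativity $\vertiii{AB}_q\le\vertiii{A}_q\vertiii{B}_q$ and the interpolation $\vertiii{\cdot}_2\le(\vertiii{\cdot}_1\vertiii{\cdot}_\infty)^{1/2}$. The genuinely essential input — and the reason the proof is clean — is the exact support containment $\hat K_r\le K_r$ from Theorem~\ref{thm:matrix}: it is what guarantees that $\Delta$ is banded and thereby licenses every nonzero-counting step above. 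Without it, $\hat L$ could place mass outside the band and the conversions from $\norm{\Delta}_\infty$ to the operator and Frobenius norms would fail.
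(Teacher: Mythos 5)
Your proposal is correct and follows essentially the same route as the paper: the same event from Theorem~\ref{thm:matrix} (support containment plus the element-wise bound), the same expansion $\hat L^T\hat L-L^TL=\Delta^T\Delta+\Delta^TL+L^T\Delta$, the same band-counting conversions $\vertiii{\Delta}_1,\vertiii{\Delta}_\infty\leq(K+1)\norm{\Delta}_\infty$, $\norm{\Delta}_F\leq\sqrt{s+p}\,\norm{\Delta}_\infty$, H\"{o}lder for the spectral norm, and the Kronecker/vec argument with $\vertiii{L}_2\leq\kappa$ for the Frobenius norm. The only piece left implicit is the final observation that the scaling $(K+1)^2\log p=o(n)$ makes each quadratic term of lower order than the corresponding linear term, which is immediate.
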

The conditions for these deviation bounds to hold are those required for support recovery as in Theorem \ref{thm:matrix}.
In many cases where estimation consistency is more of interest than support recovery, we can still deliver 
the desired error rate in Frobenius norm, matching the rate derived in \cite{rothman2008sparse}.
In particular, we can drop the strong
irrepresentable assumption (\ref{airrepre}) and weaken the Gaussian assumption (\ref{agaussian}) to the following marginal sub-Gaussian
assumption:
\begin{enumerate}[resume,label=\textbf{A\arabic*}]
  \item \textit{Marginal sub-Gaussian assumption}: \label{asubgaussian}
    The sample matrix $\mathbf{X} \in \real^{n\times p}$ has $n$ independent rows 
    with each row drawn from the distribution of a zero-mean random vector $X = (X_1, \cdots, X_p)^T$ with covariance $\Sigma$ and sub-Gaussian
    marginals, i.e., 
    \begin{align}
      \mathrm{E} \exp\left( tX_j/ \sqrt{\Sigma_{jj}} \right) \leq \exp \left( C t^2 \right)
      \nonumber
    \end{align} for all $j = 1,\ldots,p$, $t \geq 0$ and for some constant $C>0$ that does not depend on $j$.
\end{enumerate}
\begin{theorem} \label{thm:Fbound}
  Under Assumption \ref{asparsity}, \ref{abddsval} and \ref{asubgaussian},
  with tuning parameter $\lambda$ of scale $\sqrt{\frac{\log p}{n}}$ and weights as in \eqref{est:generalweight}, 
  the scaling $\left( s + p \right) \log p = o (n)$ is sufficient for the following estimation error bounds in Frobenius norm
  to hold:
  \begin{align}
    \norm{\hat{L} - L}_F &=
    \mathcal O_P\left(
    \sqrt{\frac{	\left( s+p \right) \log p}{n}} \right), \nonumber\\
    \norm{\hat{\Omega} - \Omega}_F &=
    \mathcal O_P\left(
    \sqrt{\frac{\left( s+p \right) \log p}{n}} \right).\nonumber
  \end{align}
\end{theorem}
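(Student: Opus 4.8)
The plan is to exploit the fact that \eqref{est:estimator} decouples into the $p$ row subproblems \eqref{est:subproblem}, to prove an $\ell_2$ estimation bound for each row at the rate governed by that row's number of nonzeros $K_r+1$, and then to assemble these into the Frobenius bound for $\hat L$ through the Pythagorean identity $\norm{\hat L - L}_F^2 = \sum_r \norm{\hat L_{r,1:r} - L_{r,1:r}}_2^2$, which holds because $\hat L - L$ is lower triangular. Writing $\delta_r = \hat L_{r,1:r} - L_{r,1:r}$ and $S^{(r)} = \tfrac1n \mathbf X_{1:r}^T \mathbf X_{1:r}$, the first step is the basic inequality obtained by comparing the objective of \eqref{est:subproblem} at $\hat L_{r,1:r}$ and at the truth $L_{r,1:r}$. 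Using the concavity bound $-2\log(1 + \delta_{r,r}/L_{rr}) \ge -2\delta_{r,r}/L_{rr}$ for the log term and the population stationarity identity $\Sigma_{1:r,1:r} L_{r,1:r}^T = L_{rr}^{-1} e_r$ (which follows from $L\Sigma = L^{-T}$ being upper triangular), this reduces to
\begin{align}
  \delta_r^T S^{(r)} \delta_r \le 2\left|\left\langle \left(S^{(r)} - \Sigma_{1:r,1:r}\right)L_{r,1:r}^T,\ \delta_r \right\rangle\right| + \lambda\left[ P_r(L_{r,1:r}) - P_r(\hat L_{r,1:r}) \right],
  \nonumber
\end{align}
the standard starting point for a lasso-type analysis, with the ``score'' being the residual--predictor cross term.

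Second, I would control the score in the norm dual to $P_r$ and choose $\lambda$ of order $\sqrt{\log p / n}$ so that it dominates twice the score with high probability; the comparison of the weighted hierarchical group penalty with a weighted $\ell_1$ norm (already used to derive the scaling in Theorem~\ref{thm:row}) then yields a cone condition controlling the off-support part of $\delta_r$ by its on-support part, which in turn gives $\norm{\delta_r}_1 \le c'\sqrt{K_r+1}\,\norm{\delta_r}_2$ for a constant $c'$. Third, and crucially, I would invoke a restricted strong convexity (restricted eigenvalue) lower bound $\delta_r^T S^{(r)} \delta_r \ge c\,\kappa^{-2}\norm{\delta_r}_2^2$ valid on this cone; combined with the cone condition and Cauchy--Schwarz on the right-hand side, this produces the per-row rate $\norm{\delta_r}_2^2 = \mathcal O_P\!\left((K_r+1)\tfrac{\log p}{n}\right)$. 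Summing over $r$ and using $\sum_r (K_r+1) = s+p$ gives $\norm{\hat L - L}_F = \mathcal O_P(\sqrt{(s+p)\log p/n})$, with a union bound over the $p$ rows costing only an extra $\log p$ already absorbed into $\lambda$.

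The main obstacle is the combination of the second and third steps under the weakened hypotheses of this theorem. Having dropped the joint Gaussian assumption \ref{agaussian} in favor of the marginal sub-Gaussian assumption \ref{asubgaussian}, the regression residual $\varepsilon_r = (Lx)_r$ is no longer guaranteed to be sub-Gaussian, so the sharp $\sqrt{\log p/n}$ concentration of the score and the restricted eigenvalue lower bound for $S^{(r)}$ must be established directly rather than quoted from the Gaussian-design lasso literature. The one deterministic input that does survive under \ref{asubgaussian} is the element-wise control $\norm{S - \Sigma}_\infty = \mathcal O_P(\sqrt{\log p/n})$, since each product $X_j X_k$ is sub-exponential; the delicate part is propagating this through the factor $L$ without incurring a spurious $\sqrt{K+1}$ factor, and showing that the restricted eigenvalue of $S^{(r)}$ stays bounded below over the relevant sparse cone once $n$ exceeds a constant multiple of $(s+p)\log p$. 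This is precisely where the scaling $(s+p)\log p = o(n)$ enters. It is the analogue of the curvature supplied ``for free'' by the $\log\det$ term in the graphical-lasso analysis of \cite{rothman2008sparse}; in the Cholesky parametrization the curvature is instead carried by the sample Gram matrices $S^{(r)}$, so restricted strong convexity must be argued by hand.

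Finally, the bound on $\hat\Omega = \hat L^T \hat L$ follows without further probabilistic work. Writing $\hat\Omega - \Omega = \hat L^T(\hat L - L) + (\hat L - L)^T L$ and using $\norm{AB}_F \le \vertiii{A}_2 \norm{B}_F$ gives $\norm{\hat\Omega - \Omega}_F \le \left(\vertiii{\hat L}_2 + \vertiii{L}_2\right)\norm{\hat L - L}_F$. Since \ref{abddsval} bounds $\vertiii{L}_2 \le \kappa$ and $\vertiii{\hat L}_2 \le \kappa + \norm{\hat L - L}_F = \kappa + o_P(1)$ under the stated scaling, the operator-norm prefactor is $\mathcal O_P(1)$ and the Frobenius rate for $\hat L$ transfers verbatim to $\hat\Omega$. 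An entirely equivalent route would replace the row-by-row argument with a single global convexity argument on $G(\Delta) = \mathcal L\!\left((L+\Delta)^T(L+\Delta)\right) - \mathcal L(L^T L) + \lambda\sum_r\left[ P_r\!\left((L+\Delta)_{r\cdot}\right) - P_r(L_{r\cdot})\right]$, showing $G > 0$ on the Frobenius sphere of radius $M\sqrt{(s+p)\log p/n}$ and then invoking convexity together with $G(0) = 0$ and $G(\hat\Delta) \le 0$; the analytic content of the two approaches, including the restricted-eigenvalue obstacle above, is identical.
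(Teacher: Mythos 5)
Your primary, row-by-row plan is not the route the paper takes; the paper's proof is the ``entirely equivalent route'' you mention only in your final sentence, namely a global Rothman-style argument on $G(\Delta)$ over the Frobenius sphere $\norm{\Delta}_F = M\sqrt{(s+p)\log p/n}$, followed by convexity and $G(\hat\Delta)\le G(0)=0$. The two routes are not analytically identical, and the difference is precisely the point you call the main obstacle. In the paper's argument the curvature term is extracted from the Taylor remainder of $-2\log\det(L+\Delta)$ and lower-bounded by a constant multiple of $\norm{\Delta}_F^2$ using only Assumption \ref{abddsval}; the quadratic term $\tr(S\Delta^T\Delta)\ge 0$ is simply discarded, so no restricted eigenvalue property of the sample Gram matrices is ever invoked. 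The only probabilistic ingredient is the element-wise bound $\norm{SL^T-L^{-1}}_\infty = \mathcal O_P(\sqrt{\log p/n})$ of Lemma \ref{lem:concentration} (which is where the marginal sub-Gaussianity and the propagation through $L$ are handled), and the penalty is controlled by an on/off-support split together with $2\lambda b \le a\lambda^2+b^2/a$ rather than by a cone condition. (Your observation that for lower-triangular matrices the log-determinant is curved only in the diagonal directions, since $-2\log\det(L+\Delta)=-2\sum_r\log(L_{rr}+\Delta_{rr})$, is a fair concern to raise even about this step of the paper's own argument; but the paper's written proof proceeds via the Kronecker-product Hessian bound and does not route the off-diagonal curvature through the Gram matrices.)

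As a self-contained proof, your proposal has a genuine gap at the step you yourself flag: the restricted strong convexity bound $\delta_r^T S^{(r)}\delta_r \ge c\,\kappa^{-2}\norm{\delta_r}_2^2$ on the cone is declared necessary and difficult but never established. The one mechanism you offer --- propagating $\norm{S-\Sigma}_\infty=\mathcal O_P(\sqrt{\log p/n})$ through $\left|\delta_r^T\left(S^{(r)}-\Sigma_{1:r,1:r}\right)\delta_r\right| \le \norm{S-\Sigma}_\infty\norm{\delta_r}_1^2 \le C K_r\sqrt{\log p/n}\,\norm{\delta_r}_2^2$ on the cone --- requires $K^2\log p = o(n)$, which is strictly stronger than the stated scaling $(s+p)\log p = o(n)$ (a single row with bandwidth of order $p$ gives $s+p$ of order $p$ but $K^2$ of order $p^2$), and under Assumption \ref{asubgaussian} alone (marginal, not joint, sub-Gaussianity) the sharper restricted-eigenvalue results for random designs cannot simply be quoted. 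The cone condition for the weighted hierarchical penalty and the dual-norm control of the score are likewise only sketched. The final transfer to $\hat\Omega$ via $\hat\Omega-\Omega=\hat L^T(\hat L-L)+(\hat L-L)^TL$ and Assumption \ref{abddsval} is correct and matches the paper.
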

\begin{proof}
  See Appendix \ref{proof:Fbound}.
\end{proof}

The rates in Corollary \ref{cor:Omegabounds} (and Theorem \ref{thm:Fbound}) essentially match the rates obtained in methods that directly
estimate $\Omega$ (e.g., the graphical lasso estimator, studied in \citealt{rothman2008sparse}, \citealt{ravikumar2011high}, and the column-by-column
methods as in \citealt{cai2011constrained}, \citealt{liu2012tiger}, and \citealt{sun2013sparse}).
However, the exact comparison in rates with these methods is not straightforward.
First, the targets of interest are different. In the setting where the variables have a known ordering,
we are more interested in the structural information among variables that is expressed in $L$, and thus 
accurate estimation of $L$ is more important.
When such ordering is not available as considered in \citet{rothman2008sparse, cai2011constrained, liu2012tiger} and so on, 
however, the conditional dependence structure encoded by the sparsity pattern
in $\Omega$ is more of interest, and the accuracy of directly estimating $\Omega$ is the focus.
Moreover, deviation bounds of different methods are built upon assumptions that treat different quantities as constants.
Quantities that are assumed to remain constant in the analysis of one method might actually be allowed to scale with ambient dimension in a 
nontrivial manner in another method, which makes direct rate comparison among different methods complicated and less illuminating.

Our analysis can be extended
to the unweighted version
of our estimator, i.e., with weight $w_{\ell m} = 1$, but under more
restrictive conditions and with slower rates of convergence. 
Specifically, Assumption \ref{airrepre} becomes
$\max_{\ell \in \mathcal{I}^{c}_r } 
\norm{\Sigma_{\ell\I_r} \left( 
\Sigma_{\I_r\I_r}\right)^{-1}}_1 
\leq \left( 1-\alpha \right)/K_r$ for each $r = 2,\ldots,p$. With the same tuning parameter choice \eqref{theory:lambdar} 
and \eqref{theory:lambdaL}, the terms of $K_r$ and $K$ in sample size requirements \eqref{theory:samplesizer} and \eqref{theory:samplesizeL}
are replaced with $K_r^2$ and $K^2$, respectively. The estimation error bounds in all norms are multiplied by an extra factor of $K$.
All of the above indicates that in highly sparse situations (in which $K$ is very small), the
unweighted estimator has very similar theoretical performance to the weighted estimator.

\section{Simulation Study} \label{sec:simulation}
In this section we study the empirical performance of our estimators 
(both with weights as in \eqref{est:generalweight} and with no weights,
i.e., $w_{\ell m}=1$) on simulated data.
For comparison, we include
two other sparse precision matrix estimators designed for 
the ordered-variable case:
\begin{itemize}
  \item \textbf{Non-Adaptive Banding}~\citep{bickel2008regularized}:
    This method estimates $L$ as a
    lower-triangular matrix with a fixed bandwidth $K$ applying across all rows. The regularization parameter used in this method
    is the fixed bandwidth $K$.
  \item \textbf{Nested Lasso}~\citep{levina2008sparse}: This method yields an adaptive banded structure 
    by solving a set of penalized least-squares problems (both the loss function and the nested-lasso penalty are non-convex).
    The regularization parameter controls the amount of penalty and thus the sparsity level of the resulting estimate.
\end{itemize}

All simulations are run at a sample size of $n = 100$, where each sample is drawn independently from the $p$-dimensional normal distribution
$N(\mathbf{0}, (L^TL)^{-1})$.
We compare the performance of our estimators with the methods above both in terms of support recovery (in Section \ref{subsec:supportrecovery})
and in terms of how well $\hat L$ estimates $L$ (in Section \ref{subsec:dev}).
For support recovery, we consider $p = 200$ and for estimation accuracy, we consider $p = 50, 100, 200$, which corresponds to settings
where $p < n$, $p = n$, and $p > n$, respectively.

We simulate under the following models for $L$. We adapt the parameterization $L = D^{-1}T$ as in \citet{2016arXiv161002436K},
where $D$ is a diagonal matrix with diagonal elements drawn randomly from a uniform distribution on the interval $[2, 5]$,
and $T$ is a lower-triangular matrix with ones on its diagonal and off-diagonal elements defined as follows:
\begin{itemize}
  \item \textbf{Model 1}: \label{sim:model1}
    Model 1 is at one extreme of bandedness of the Cholesky factor $L$, in which we take the lower triangular matrix $L \in \real^{p \times p}$ 
    to have a strictly banded structure, with each row having the same bandwidth 
    $K_r = K = 1$ for all $r$. Specifically, we take $T_{r,r} = 1$, $T_{r,r-1} = 0.8$ and $T_{r,j} = 0$ for $j<r-1$.

  \item \textbf{Model 2}: \label{sim:model2}
    Model 2 is at the other extreme, in which we allow $K_r$ to vary with $r$. We take $T$ to be a block diagonal matrix with
    5 blocks, each of size $p / 5$. Within each block, with probability 0.5 each row $r$ is assigned with a non-zero bandwidth
    that is randomly drawn from a uniform distribution on $\left\{ 1,\ldots, r-1 \right\}$ (for $r > 1$). 
    Each non-zero element in $T$ is then drawn independently
    from a uniform distribution on the interval $[0.1, 0.4]$, and is assigned with a positive/negative sign with probability 0.5.

  \item \textbf{Model 3}: \label{sim:model3}
    Model 3 is a denser and thus more challenging version of Model 2, with $T$ a block diagonal matrix with only 2 blocks. 
    Each of the blocks is of size $p/2$ but is otherwise generated as in Model 2.

  \item \textbf{Model 4}: \label{sim:model4}
    Model 4 is a dense block diagonal model. The matrix $T$ has a completely dense lower-triangular block from the $p/4$-th row to the $3p/4$-th row
    and is zero everywhere else.
    Within this block, all off-diagonal elements are drawn uniformly from $[0.1, 0.2]$, and positive/negative signs are then assigned with probability 0.5.
\end{itemize}

Model 1 is a stationary autoregressive model of order 1. By the regression interpretation \eqref{eq:regression},
for each $r$, it can be verified that the autoregressive polynomial of the $r$-th row of Models 2, 3, and 4
has all roots outside the unit circle, which
characterizes stationary autoregressive models of orders equal to the corresponding row-wise bandwidths.
See Figure \ref{fig:truemodel} for examples of the four sparsity patterns for $p = 100$.
The non-adaptive banding method should benefit from Model 1 while the nested
lasso and our estimators are expected to perform better in the other three models where each row has its own bandwidth.
\begin{figure}
  \centering
  \includegraphics[width=\textwidth]{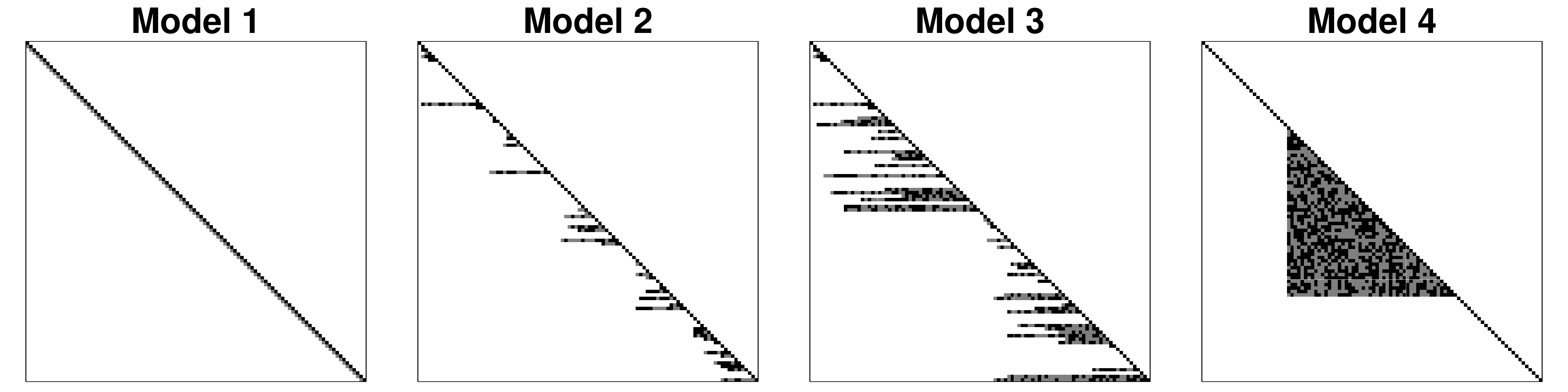}
  \caption{Schematic of four simulation scenarios with $p=100$: 
    (from left to right) Model 1 is strictly banded, Model 2 has small variable bandwidth,
    Model 3 has large variable bandwidth, and Model 4 is block-diagonal.
    Black, gray, and white stand for positive, negative, and zero entries, respectively.
  The proportion of elements that are non-zero is 4\%, 6\%, 15\%, and 26\%, respectively.}
  \label{fig:truemodel}
\end{figure}

For all four models and every value of $p$ considered,
we verified that Assumptions \ref{airrepre} and \ref{abddsval} hold and then
simulated $n = 100$ observations according to each of the four models based on Assumption \ref{agaussian}.

\subsection{Support Recovery} \label{subsec:supportrecovery}
We first study how well the different estimators identify 
zeros in the four models above.
We generate $n=100$ random samples from each model with $p = 200$.
The tuning parameter $\lambda \geq 0$ in 
\eqref{est:estimator} measures the amount of regularization and
determines the sparsity level of the estimator.  We use 100 tuning parameter values for each estimator and 
repeat the simulation 10 times.

Figure \ref{fig:ROC}
shows the sensitivity (fraction of true non-zeros that are correctly recovered) 
and specificity (fraction of true zeros that are correctly set to zero)
of each method parameterized by its tuning parameter (in
the case of non-adaptive banding, the parameter is the bandwidth
itself, ranging from $0$ to $p-1$).
Each set of 10 curves of the same color corresponds to the results of one estimator, 
and each curve within the set corresponds to the result of one draw from 10 simulations.
Curves closer to the upper-right corner indicate better classification
performance (the $x+y=1$ line corresponds to random guessing).

The sparsity level of the non-adaptive banding estimator depends only on the pre-specified bandwidth 
(which is the method's tuning parameter) and not on the data itself.
Consequently, the sensitivity-specificity curves for the non-adaptive banding do not vary across replications when 
simulating from a particular underlying model.  The sparsity levels of the nested lasso and our methods, by contrast,
hinge on the data, thus giving a different curve for each replication.

\begin{figure}[h]
  \centering
  \includegraphics[width=0.8\textwidth]{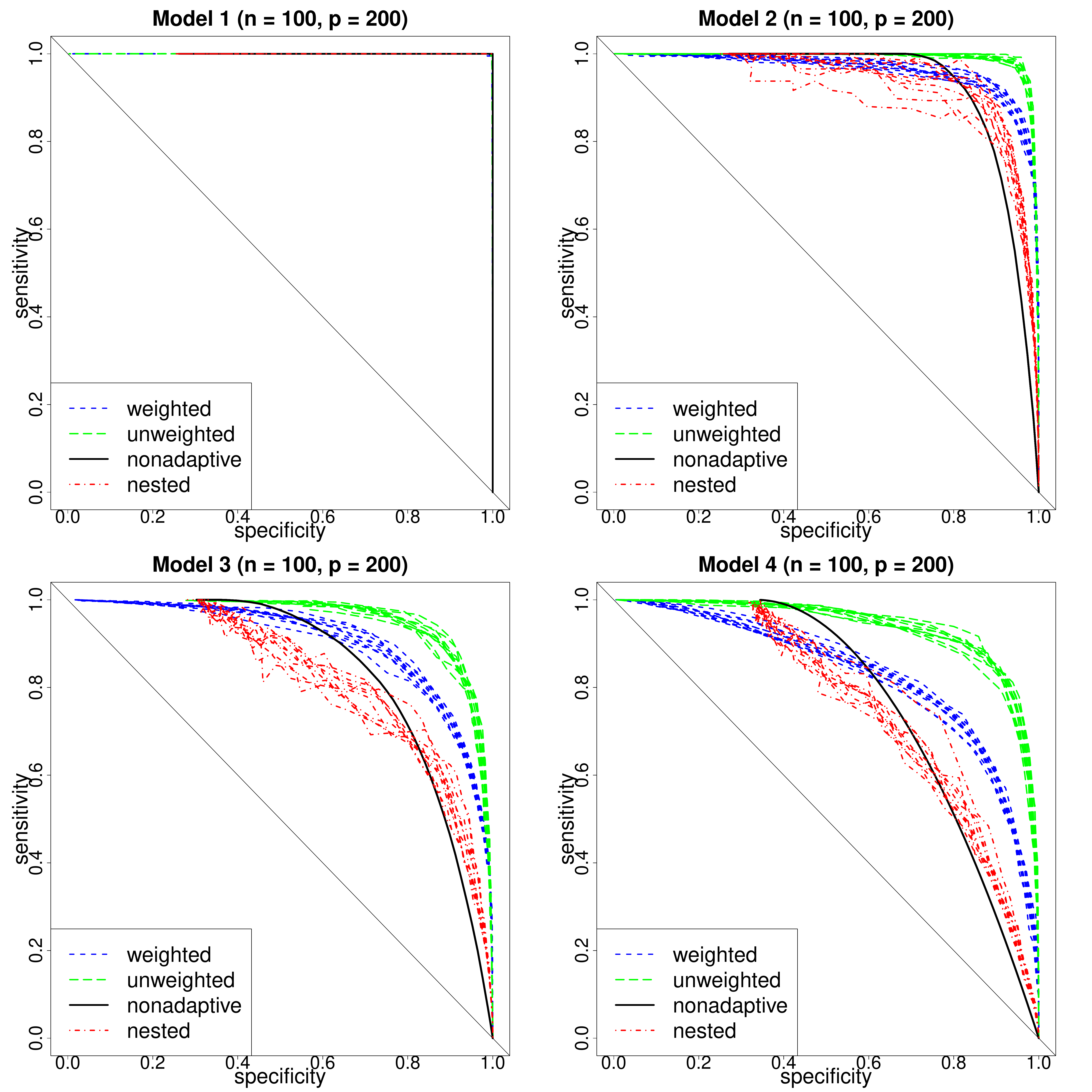}
  \caption{
    ROC curves showing support recovery when the true $L$ (top-left) is strictly banded, (top-right) has small variable bandwidth,
  (bottom-left) has large variable bandwidth, and (bottom-right) is block-diagonal, over 10 replications.}
  \label{fig:ROC}
\end{figure}

In practice, we find that our methods and the nested lasso sometimes
produce entries with very small, but non-zero, absolute values. 
To study support recovery, we set all estimates whose absolute values are
below $10^{-10}$ to zero, both in our estimators and the nested lasso.

In Model 1, we observe that all methods considered attain perfect classification
accuracy for some value of their tuning parameter. While the non-adaptive approach is guaranteed to do so in this scenario,
it is reassuring to see that the more flexible methods can still perfectly recover this sparsity pattern.

In Model 2, we observe that our two methods outperform the
nested lasso, which itself, as expected, outperforms the non-adaptive banding method.
As the model becomes more challenging (from Model 2 to Model 4), the performances of
all four methods start deteriorating. Interestingly, the nested lasso no longer retains its advantage over 
non-adaptive banding in Models 3 and 4, while the performance advantage of our methods become even more substantial.

The fact that the unweighted version of our method outperforms the
weighted version stems from the fact that all models are comparatively sparse for $p = 200$,
and so the heavier penalty on each row delivered by the unweighted approach recovers
the support more easily than the weighted version. 

\subsection{Estimation Accuracy} \label{subsec:dev}
We proceed by comparing the estimators in terms of how far $\hat L$
is from $L$. To this end, we generate $n = 100$ random samples from 
the four models with $p = 50$, $p = 100$, and $p=200$.
Each method is computed with
its tuning parameter selected to maximize the Gaussian 
likelihood on the validation data in a 5-fold cross-validation. 
For comparison, we report the estimation accuracy of each estimate in terms of
the scaled Frobenius norm $\frac{1}{p}\norm{\hat{L} - L}_F^2$, the matrix infinity norm $\vertiii{\hat{L} - L}_\infty$, the spectral norm $\vertiii{\hat{L} - L}_2$,
and the (scaled) Kullback-Leibler loss $\frac1{p}\left[\text{tr}(\Omega^{-1}\hat\Omega)-\log\det(\Omega^{-1}\hat\Omega)-p\right]$ \citep{levina2008sparse}.

The simulation is repeated 50 times, and the results are summarized in Figure~\ref{fig:Model_1} through Figure~\ref{fig:Model_4}.
Each figure corresponds to a model, and consists of a 4-by-3 panel layout. Each row corresponds to an error measure, and each column corresponds to a value of $p$.

\begin{figure}[p]
  \centering
  \includegraphics[width=0.99\linewidth]{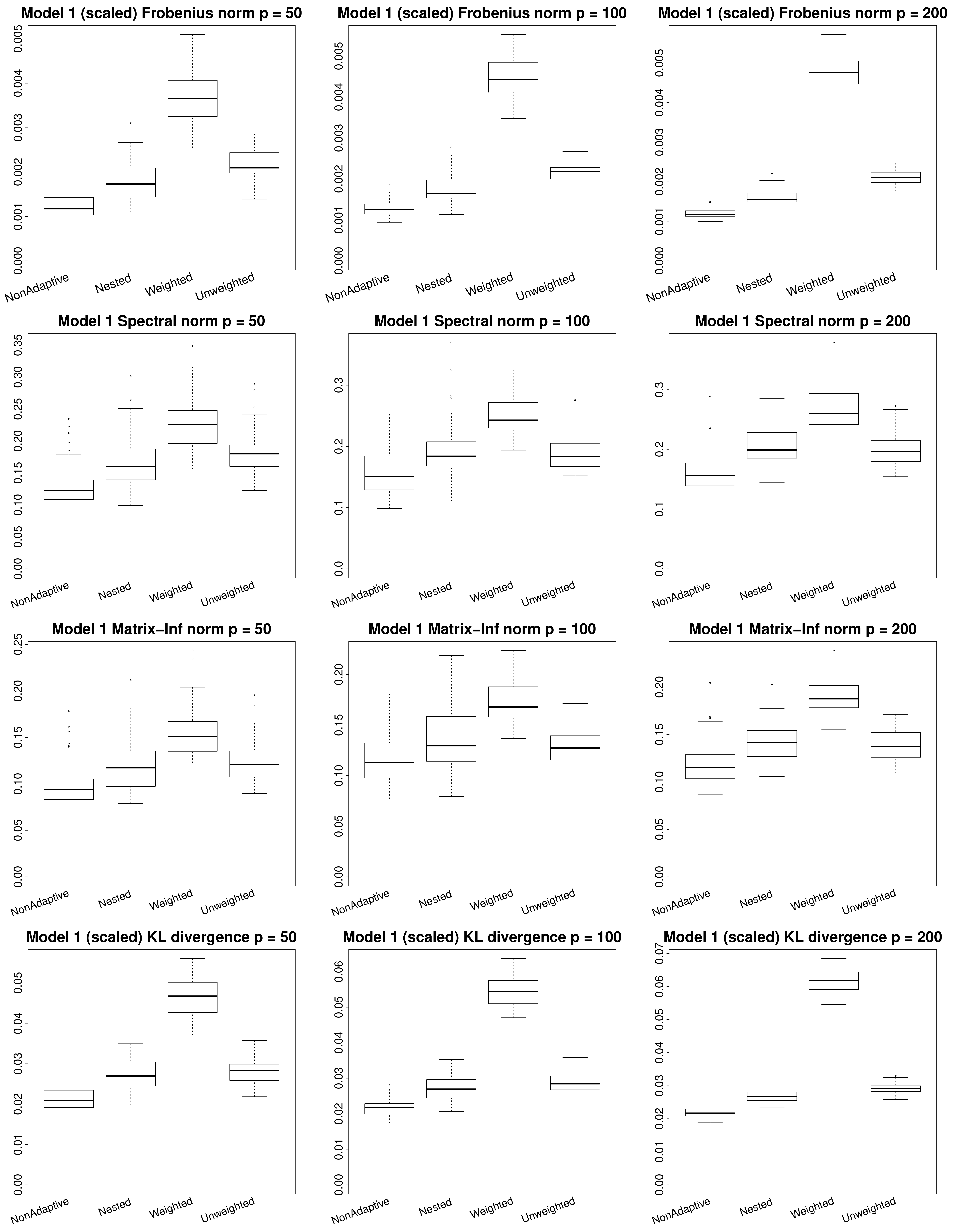}
  \caption{Estimation accuracy when data are generated from Model 1, which is strictly banded.}
  \label{fig:Model_1}
\end{figure}
\begin{figure}[p]
  \centering
  \includegraphics[width=0.99\linewidth]{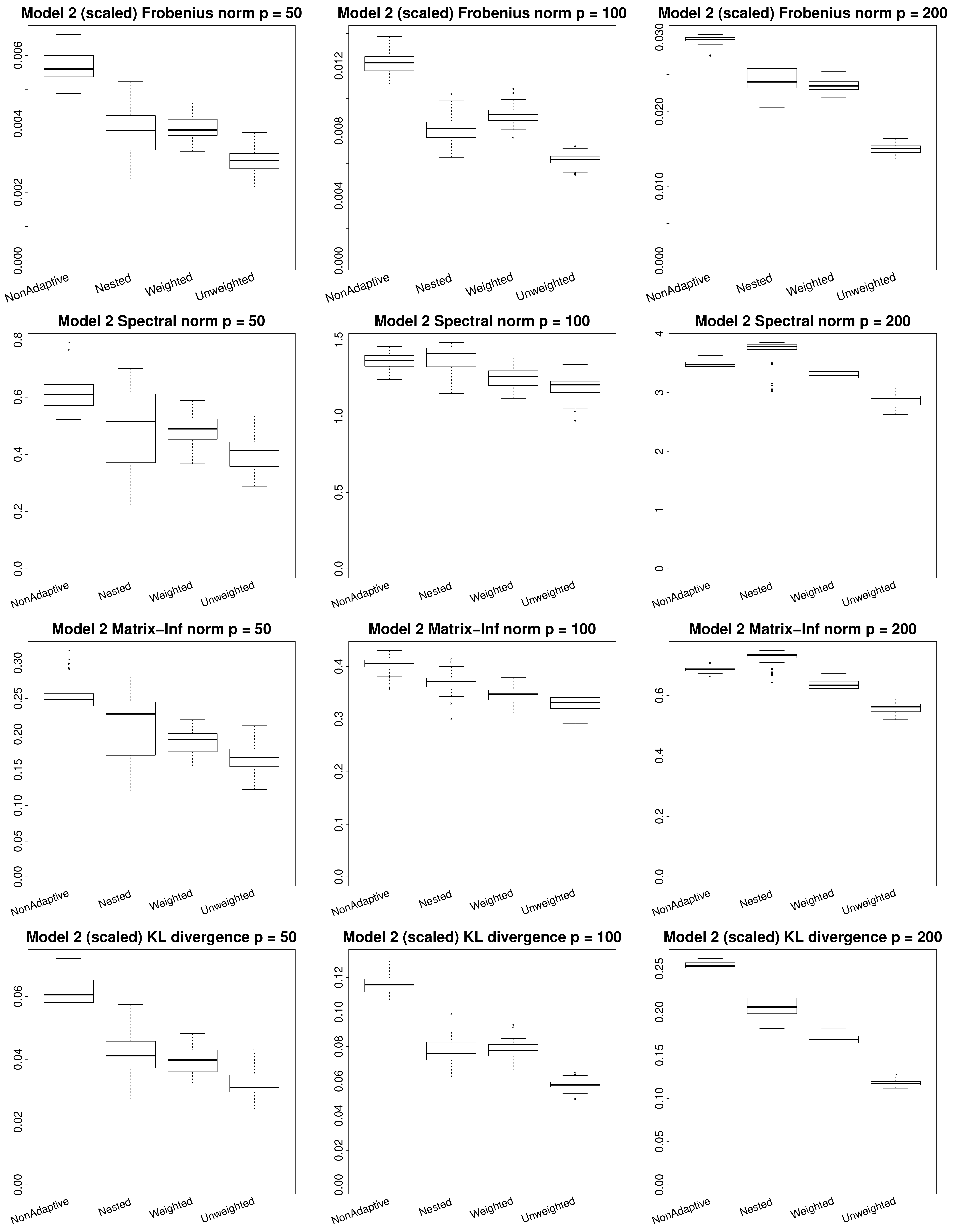}
  \caption{Estimation accuracy when data are generated from Model 2, which has small variable bandwidth.}
  \label{fig:Model_2}
\end{figure}
\begin{figure}[p]
  \centering
  \includegraphics[width=0.99\linewidth]{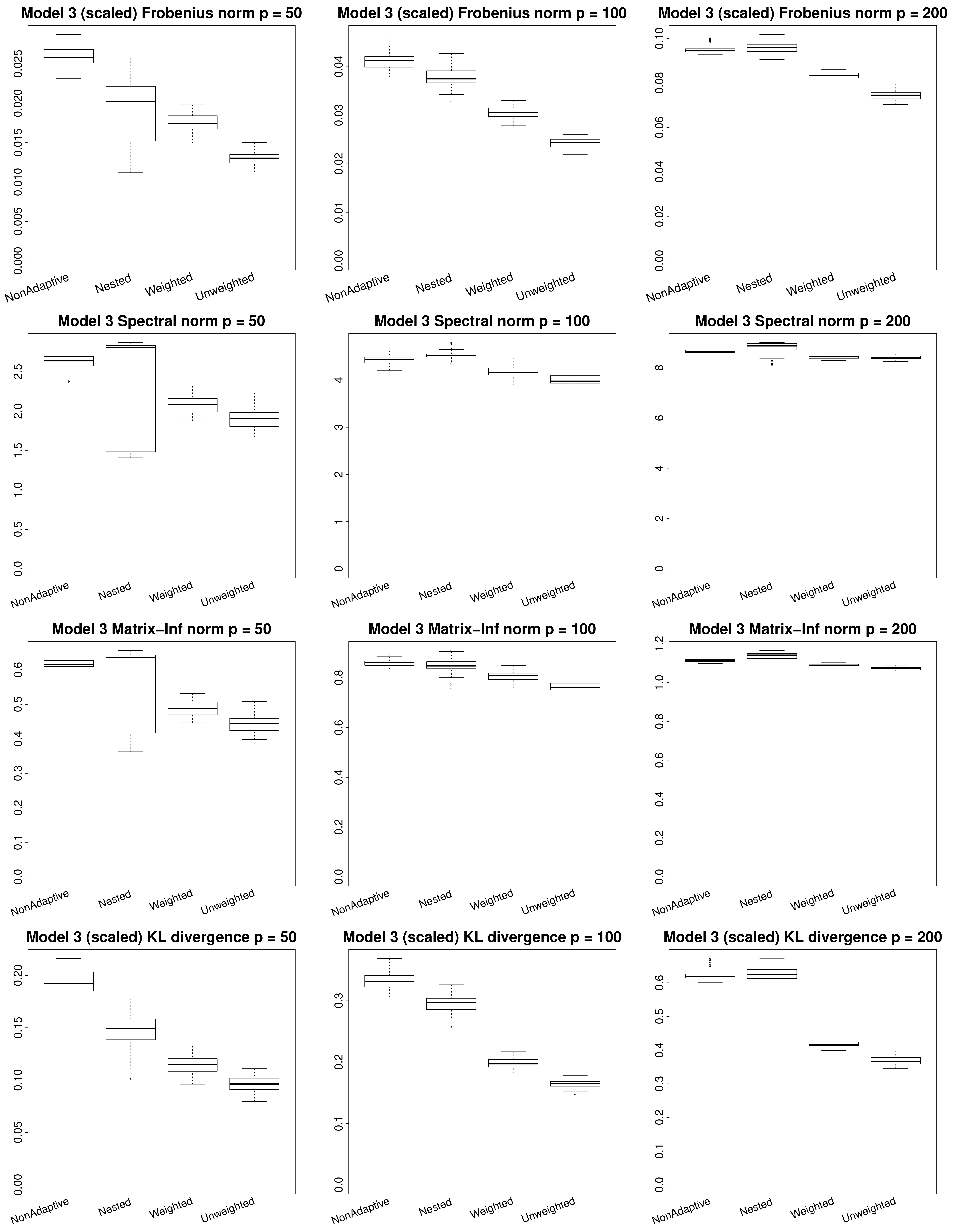}
  \caption{Estimation accuracy when data are generated from Model 3, which has large variable bandwidth.}
  \label{fig:Model_3}
\end{figure}
\begin{figure}[p]
  \centering
  \includegraphics[width=0.99\linewidth]{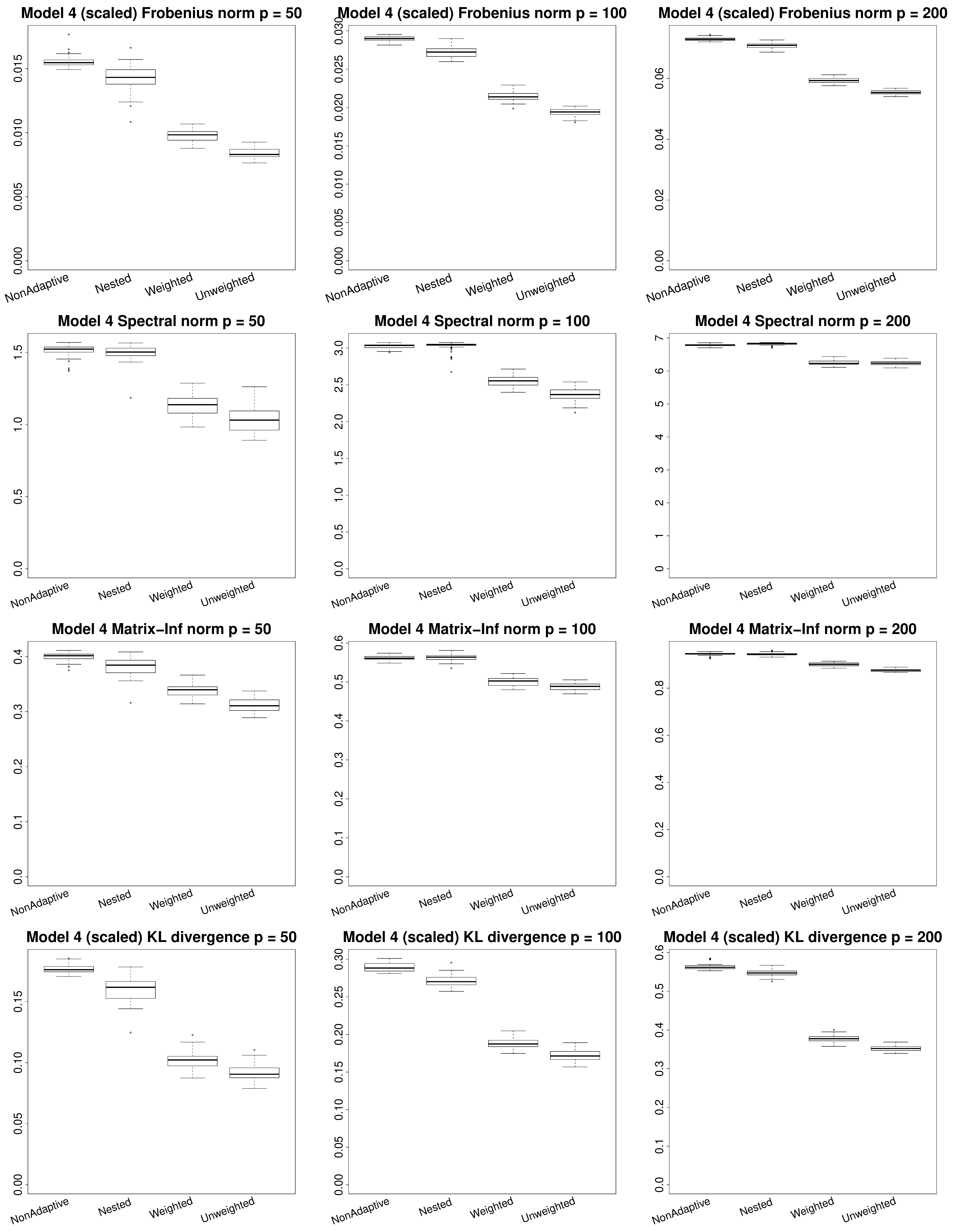}
  \caption{Estimation accuracy when data are generated from Model 4, which is block-diagonal.}
  \label{fig:Model_4}
\end{figure}

As expected, the non-adaptive banding estimator does better than the other
estimators in Model 1.  In Models 2, 3, and 4, where bandwidths vary with row, 
our estimators and the nested lasso outperform non-adaptive banding.

A similar pattern is observed as in support recovery.
As the model becomes more complex and $p$ gets larger, the performance of the nested lasso
degrades and gradually becomes worse than non-adaptive banding. By contrast,
as the estimation problem becomes more difficult, the advantage in performance of our methods
becomes more obvious.

We again observe that the unweighted estimator performs better than the weighted one.
As shown in Section \ref{sec:theory}, 
the overall performance of our method hinges on the underlying model complexity (measured in terms of $\max_r K_r$) as well as 
the relative size of $n$ and $p$. 
When $n$ is relatively small, usually a more constrained method (like the unweighted estimator)
is preferred over a more flexible method (like the weighted estimator). So in our simulation setting, it is reasonable to observe
that the unweighted method works better.
Note that as the underlying $L$ becomes denser (from Model 1 to Model 4), the performance difference between
the weighted and the unweighted estimator diminishes. This corroborates our discussion in the end of Section \ref{sec:theory} that the 
performance of the unweighted estimator becomes worse when the underlying model is dense.

\section{Applications to Data Examples} \label{sec:realdata}
In this section, we illustrate the practical merits of our proposed method by applying it to two data examples.
We start with an application to genomic data where our method can help model the local
correlations along the genome. In Section \ref{sec:phoneme} we compare our method
with other estimators within the context of a sound recording classification problem.
\subsection{An Application to Genomic Data} \label{sec:snp}
We consider an application of our estimator to
modeling correlation along the genome.
Genetic mutations that occur close together on a chromosome are more likely to
be co-inherited than mutations that are located far apart (or on
separate chromosomes).  This leads to local correlations between genetic
variants in a population.  Biologists refer to this local dependence
as {\em linkage disequilibrium} (LD).  The width of this dependence is
known to vary along the genome due to the variable locations of recombination hotspots, 
which suggests that adaptively
banded estimators may be quite suitable in these contexts.

We study HapMap phase 3 data from the International HapMap project
\citep{international2010integrating}.
The data consist of $n = 167$ humans from the YRI (Yoruba in Ibadan,
Nigeria) population,
and we focus on $p = 201$ consecutive tag SNPs 
on chromosome 22 (after filtering out infrequent sites with minor allele frequency $\leq 10 \%$).

While tag SNP data, which take discrete values $\{0, 1, 2\}$, are non-Gaussian, we argue that
our estimator is still sensible to use in this case. First, the parameterization $\Omega = L^T L$ does not depend on the Gaussian assumption.
Moreover the estimator corresponds to minimizing a penalized 
Bregman divergence of the log-determinant function \citep{ravikumar2011high}. 
Furthermore, the least-squares term in \eqref{est:estimator} can be interpreted as minimizing the prediction error in the linear models \eqref{eq:regression}
while the log terms act as log-barrier functions to impose
positive diagonal entries (which ensures that the resulting $\hat{L}$ is a valid Cholesky factor).

To gauge the performance of our estimator on modeling LD,
we randomly split the $167$ samples into training and testing sets of
sizes $84$ and $83$, respectively. Along a path
of tuning parameters with decreasing values, estimators $\hat{L}$ are computed
on the training data.
To evaluate $\hat{L}$ on a vector $\tilde{x}$ from the test data set, we can compute the error in predicting
$\hat{L}_{rr} \tilde{x}_r$ using $- \sum_{k = 1}^{r - 1}\hat{L}_{r,k} \tilde{x}_k$ via \eqref{eq:regression} for each $r$, giving
the error
\begin{align}
  \mathrm{err}(\tilde{x}) = \frac{1}{p - 1} \sum_{r = 2}^p\left( \hat{L}_{rr}\tilde{x}_r +  \sum_{k = 1}^{r - 1} \hat{L}_{rj} \tilde{x}_k \right)^2.
  \label{snp:predictionerror}
\end{align}

This quantity (with mean and the standard deviation over test samples) is reported in Figure~\ref{fig:prederr} for our estimator
under the two weighting schemes. Recall that the quadratically decaying weights \eqref{est:generalweight} act essentially like the $\ell_1$ penalty.
For numerical comparison, we also include the result of the estimator with $\ell_1$ penalty, which is the \textit{CSCS (Convex Sparse Cholesky Selection)} method proposed in \cite{2016arXiv161002436K}.
For both the non-adaptive banding and the nested lasso methods, we found that their implementations fail to work due to the collinearity of the columns of $\mathbf{X}$.
\begin{figure}
  \centering
  \includegraphics[width=\linewidth]{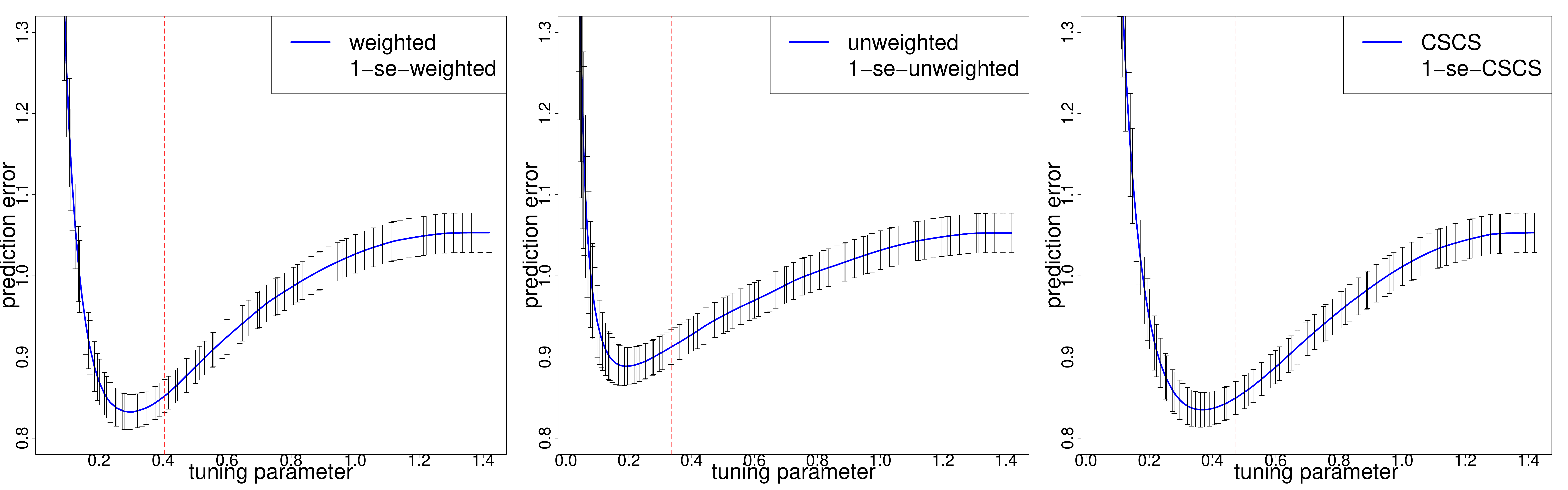}
  \caption{Prediction error (computed on an independent test set) of the weighted (left), unweighted (middle), and CSCS (right) estimators.}
  \label{fig:prederr}
\end{figure}

Figure~\ref{fig:prederr} shows that our estimators are effective in improving modeling
performance over
a diagonal estimator (attained when $\lambda$ is sufficiently large) and strongly outperform the plain MLE (as
evidenced by the sharp increase in prediction error as $\lambda \to 0$).
As expected, the weighted estimator performs very similarly to the CSCS estimator, which uses the $\ell_1$ penalty.
Both of these perform better than the unweighted one.
However, the sparsity pattern obtained by the two penalties are different (as shown in Figure~\ref{fig:bestL}).
\begin{figure}
  \centering
  \includegraphics[width=\linewidth]{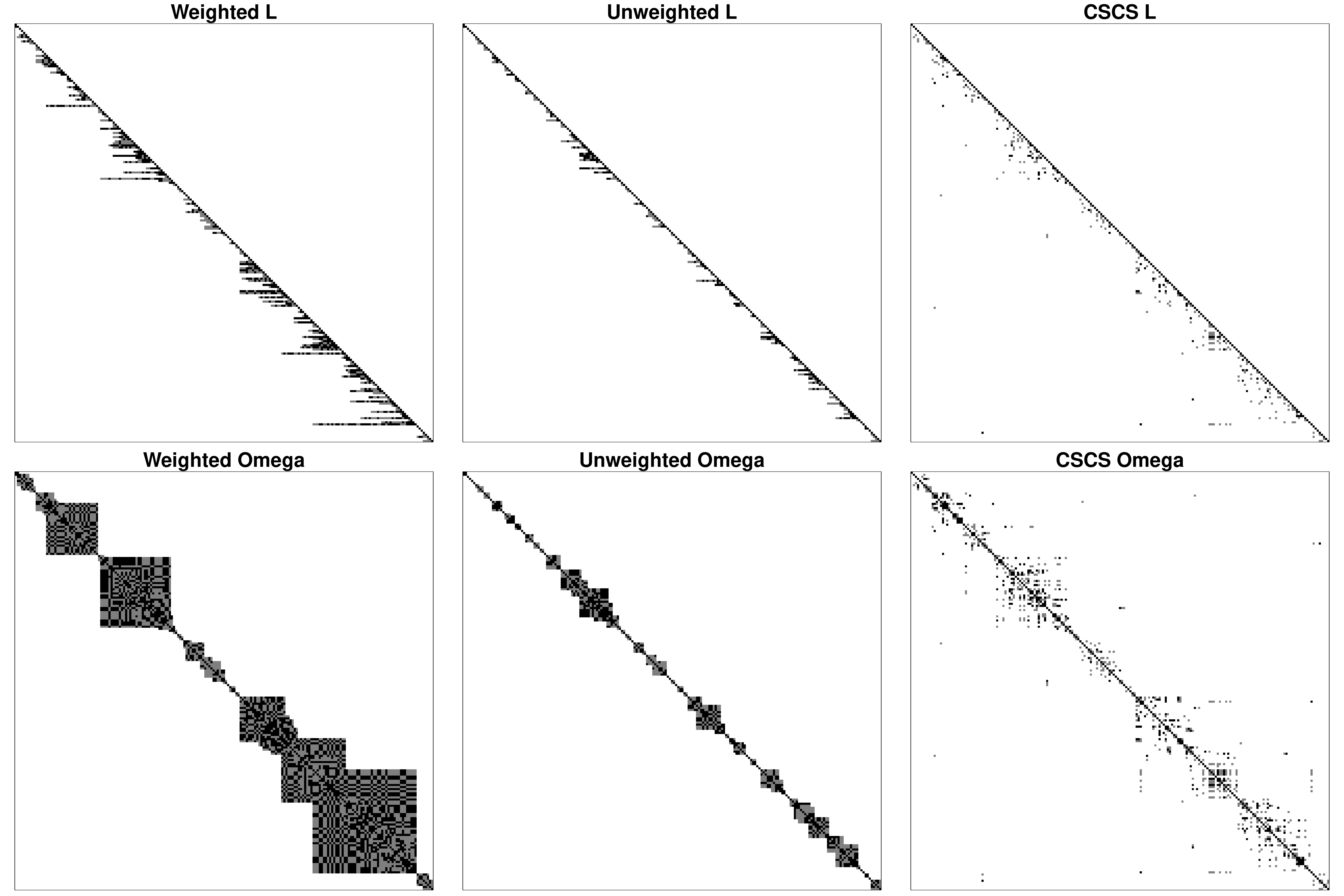}
  \caption{Estimates of linkage disequilibrium with tuning parameters
  selected by the one-standard-error rule and their corresponding precision matrix estimates.}
  \label{fig:bestL}
\end{figure}

In Figure~\ref{fig:bestL} we show the 
recovered signed support of the weighted, unweighted, and CSCS estimators and their corresponding precision matrices.
Black, gray, and white stand for positive, negative, and zero entries, respectively.
Tuning parameters
are chosen using the one-standard-error rule \citep[see, e.g.,][]{ESL}.
The $r$-th row of the estimated matrix $\hat L$ reveals the number of neighboring
SNPs necessary for reliably predicting the state of the $r$-th SNP.
Interestingly, 
we see some evidence of small block-like structures in $\hat{L}$,
consistent with the hotspot model of recombination as previously described.
This regression-based perspective to modeling LD may be a useful
complement to the more standard approach, which focuses on raw marginal correlations.
Finally, the sparsity recovered by the CSCS estimator, which uses the $\ell_1$ penalty, is less easily interpretable, since some entries far from the diagonal
are non-zero, losing the notion of `local'.

\subsection{An Application to Phoneme Classification} \label{sec:phoneme}
In this section, we develop an application of our method to a classification problem described in \citet{ESL}.
The data contain $n = 1717$ continuous speech recordings, which are
categorized 
into two vowel sounds: `aa' ($n_1 = 695$) and `ao' ($n_2 = 1022$).
Each observation $(x_i, y_i)$ has a predictor $x_i \in \real^p$ representing the (log) intensity of the sound across $p = 256$ frequencies
and a class label $y_i \in \{1, -1\}$. 
It may be reasonable to apply our method in this problem since the
features are frequencies, which come with a natural ordering 

In linear discriminant analysis (LDA), one models the features
as multivariate Gaussian conditional on the class: $x_i | y_i = k \sim N_p(\mu^{(k)}, \Sigma)$ for $k \in \{1, -1\}$; in quadratic discriminant analysis (QDA), one allows each class to
have its own covariance matrix: $x_i | y_i = k \sim N_p(\mu^{(k)}, \Sigma^{(k)})$.
The LDA/QDA classification rules assign an observation $x \in \real^p$ to class $k$ that 
maximizes $\hat{P}(y = k | x) \propto \hat{P}(x |y = k) \hat{P}(y = k)$, where the estimated probability $\hat{P}(x |y = k)$ is calculated
using maximum likelihood estimates $\hat{\mu}^{(k)}$, $\hat{\Sigma}$, and $\hat{\Sigma}^{(k)}$.
More precisely, in the ordered case, the resulting class $k$ maximizes the LDA/QDA scores:
\begin{align}
  \delta^{(k)}_{\mathrm{LDA}}(x) &= x^T \hat{\Omega} \hat{\mu}^{(k)} - \frac{1}{2} (\hat{\mu}^{(k)})^T \hat{\Omega} \hat{\mu}^{(k)} + \log \hat{\pi}^{(k)}
  \nonumber \\
  &= (\hat{L}x)^T \hat{L}\hat{\mu}^{(k)} - \frac{1}{2} \norm{\hat{L}\hat{\mu}^{(k)}}_2^2 + \log \hat{\pi}^{(k)}
  \label{eq:LDA} \\
  \delta^{(k)}_{\mathrm{QDA}}(x) &= x^T \hat{\Omega}^{(k)} \hat{\mu}^{(k)} - \frac{1}{2} (\hat{\mu}^{(k)})^T \hat{\Omega}^{(k)} \hat{\mu}^{(k)} + \log \hat{\pi}^{(k)}
  \nonumber\\
  &= (\hat{L}^{(k)} x)^T \hat{L}^{(k)} \hat{\mu}^{(k)} - \frac{1}{2} \norm{\hat{L}^{(k)}\hat{\mu}^{(k)}}_2^2 + \log \hat{\pi}^{(k)}.
  \label{eq:QDA}
\end{align}
Note that it is the precision matrix, not the covariance matrix, that is used in the above scores.
In the setting where $p > n$, the MLE of $\Omega$ or $\Omega^{(k)}$ does not exist. A regularized estimate of precision matrix
that exploits the natural ordering information can be helpful in this setting.

To demonstrate the use of our estimator in the high-dimensional setting, we randomly split the data into two parts, 
with $10\%$ of the data assigned to the training set and the remaining $90\%$ of the data assigned to the test set.
On the training set,
we use 5-fold cross-validation to select the tuning parameter minimizing misclassification error on the validation data.
The estimates $\hat{L}$ and $\hat{L}^{(k)}$ are then
plugged into 
\eqref{eq:LDA} and \eqref{eq:QDA}
along with 
$\hat{\mu}^{(k)} = \sum_{i \in {\mathrm{class}\, k}} x_i / n^{(k)}$ and $\hat{\pi}^{(k)} = n^{(k)} / n_{\mathrm{train}}$
to calculate the misclassification error in the test set.
For comparison, we also include non-adaptive banding, the nested lasso, and CSCS.
We compute the classification error (summarized in
Table~\ref{tab:phoneme}), averaged over 10 random train-test splits.

We first observe that, in general, the adaptive methods perform better than the non-adaptive one (which assumes a fixed bandwidth).
It is again found that the performance of the weighted estimator is very similar to the one using $\ell_1$ penalty (i.e., the CSCS method).
And our results are comparable to the nested lasso both in LDA and QDA.
Interestingly, we find that the weighted estimator does better in LDA while the unweighted estimator performs better in QDA.
The reason, we suspect, is that QDA requires the estimation of more parameters than LDA and
therefore favors more constrained methods like the unweighted estimator, which more strongly discourages non-zeros from being far from the diagonal
than the weighted one.

\begin{table}
  \centering
  \begin{tabular}{rrrrrr}
    \hline
    & Unweighted & Weighted & Nested Lasso & Non-adaptive & CSCS \\ 
    \hline
    LDA & 0.271 & 0.246 & 0.250 & 0.268 & 0.245 \\ 
    QDA & 0.232 & 0.256 & 0.221 & 0.246 & 0.267 \\ 
    \hline
  \end{tabular}
  \caption{Average test data classification error rate of discriminant analysis of phoneme data}
  \label{tab:phoneme}
\end{table}


An \texttt{R} \citep{citeR} package, named \texttt{varband}, is available on \texttt{CRAN}, implementing our estimator.
The estimation is very fast with core functions coded in {\tt C++},
allowing us to solve large-scale problems in substantially less time than is possible with the \texttt{R}-based implementation of the nested lasso.

\section{Conclusion} \label{sec:conclusion}
We have presented a new flexible method for learning local dependence
in the setting where the elements of a random vector have a known ordering.
The model amounts to sparse estimation of the inverse of the Cholesky factor of the covariance matrix
with variable bandwidth. 
Our method is based
on a convex formulation that allows it to simultaneously
yield a flexible adaptively-banded sparsity pattern, enjoy efficient
computational algorithms, and be studied theoretically.  
To our knowledge, no previous method has all these properties.
We show how the
matrix estimation problem can be decomposed into 
independent row estimation problems, each of which can be solved via
an ADMM algorithm having efficient updates. 
We prove that our method recovers the
signed support of the true Cholesky factor and attains estimation consistency rates 
in several matrix norms under assumptions as mild as those in linear regression problems.
Simulation studies show that our method compares favorably to two pre-existing
estimators in the ordered setting, both in terms of support recovery
and in terms of estimation accuracy. Through a genetic data example, we illustrate how our method
may be applied to model the local dependence of genetic variations in
genes along a chromosome. Finally, we illustrate that our method has favorable
performance in a sound recording classification problem.

\section*{Acknowledgement}
We thank Kshitij Khare for a useful discussion in which he
pointed us to the parametrization in terms of $L$.
We thank Adam Rothman for providing \texttt{R} code for the non-adaptive
banding and the nested lasso methods and Amy Williams for useful
discussions about linkage disequilibrium.
We also thank three referees and an action editor for helpful comments on an earlier manuscript.
This work was supported by NSF DMS-1405746.


\newpage

\appendix
\section{Decoupling Property} \label{online:decouple}
Let $S = \frac{1}{n} \mathbf{X}^T \mathbf{X} \in \real^{p \times p}$ be the sample covariance matrix.
Then the estimator \eqref{est:estimator}
is the solution to the following minimization problem:
\begin{align}
  \min_{\substack{L: L_{rr}>0 \\ L_{rk}=0\text{ for }r < k }}
  \left\{-2\sum_{r=1}^p
  \log L_{rr} + \tr(SL^TL) + \lambda 
  \sum_{r=2}^p \sum_{\ell = 1}^ {r-1} \sqrt{\sum_{m=1}^\ell w_{\ell m}^2 L_{rm}^2} 
\right\}.
\nonumber
\end{align}
First note that under the lower-triangular constraint 
\begin{align}
  \tr \left( S L^T L \right) =
  \frac{1}{n} \sum_{r=1}^p \tr\left( \mathbf{X} 
  L_{\cdot r}^T L_{r\cdot} \mathbf{X}^T \right)
  = \frac{1}{n} \sum_{r=1}^p \norm{\mathbf{X} L_{\cdot r}^T}_2^2
  = \frac{1}{n} \sum_{r=1}^p \norm{\mathbf{X}_{1:r} L_{1:r,r}^T}_2^2,
  \nonumber
\end{align}
where $\mathbf{X}_{1:r}$ is a matrix of the first $r$ columns of $\mathbf{X}$. Thus
\begin{align}
  & -2\sum_{r=1}^p
  \log L_{rr} + \tr(SL^TL) + \lambda \sum_{r=2}^p \sum_{\ell = 1}^{r-1} 
  \sqrt{\sum_{m=1}^\ell w_{\ell m}^2 L_{rm}^2} 
  \nonumber\\
  = & -2 \log L_{11} +
  \frac{1}{n} \norm{\mathbf{X}_1 L_{11}}_2^2 + \sum_{r=2}^p \left( 
  -2 \log L_{rr} + \frac{1}{n} \norm{\mathbf{X}_{1:r}L_{1:r,r}^T}_2^2 + 
  \lambda \sum_{\ell = 1}^{r-1} 
  \sqrt{\sum_{m=1}^\ell w_{\ell m}^2 L_{rm}^2} \right).
  \nonumber 
\end{align}

Therefore the original problem can be decoupled into $p$ separate problems.
In particular, a solution $\hat{L}$ can be written in a row-wise form with
\[
  \hat{L}_{11} = \argmin_{L_{11}>0} \left\{ -2\log L_{11} + \frac{1}{n} 
  \norm{\mathbf{X}_1L_{11}}_2^2 \right\} = \frac{1}{\sqrt{S_{11}}},
\]
and for $r=2,\ldots,p$,
\begin{align}
  \hat{L}^T_{1:r,r} = \argmin_{\beta \in \real^r: \beta_r>0} 
  \left\{ -2\log \beta_r + \frac{1}{n} \norm{\mathbf{X}_{1:r} \beta }_2^2 + \lambda \sum_{\ell = 1}^{r-1} \sqrt{\sum_{m=1}^\ell w_{\ell m}^2 \beta_{m}^2}\right\}.
  \nonumber
\end{align}

\section{A Closed-Form Solution to \eqref{comp:updatebeta}} \label{online:closeformupdatebeta}
The objective function in \eqref{comp:updatebeta} is a smooth function. 
Taking the derivative with respect to $\beta$ and setting to zero gives the following system of equations:
\begin{align}
  -2 \frac{1}{\beta_r} \mathbf{e}_r + \frac{2}{n} \mathbf{X}_{1:r}^T 
  \mathbf{X}_{1:r} \beta
  + u^{(t-1)} + \rho \left( \beta - \gamma^{(t-1)} \right) = \mathbf{0}.
  \nonumber
\end{align}
Letting $S^{(r)} = \frac{1}{n} \mathbf{X}_{1:r}^T \mathbf{X}_{1:r}$, then the equations above can be further decomposed into
\begin{align}
  &- \frac{2}{\beta_r} + \left( 2 S^{(r)}_{rr} + \rho \right) \beta_r + 
  2 S^{(r)}_{r,-r} \beta_{-r} +  u^{(t-1)}_r - \rho \gamma^{(t-1)} _r = 0,
  \nonumber\\
  & \left( 2 S^{(r)}_{-r,-r} + \rho I \right) \beta_{-r} + 
  2 S^{(r)}_{-r,r} \beta_r +  u^{(t-1)}_{-r} - \rho \gamma^{(t-1)} _{-r}  = 
  \mathbf{0}.
  \nonumber
\end{align}
Solving for $\beta_{-r}$ in the second system of equations gives
\begin{align}
  \beta_{-r} = - \left( 2 S^{(r)}_{-r,-r} + \rho I \right)^{-1} \left( 2 S^{(r)}_{-r,r} \beta_r
  + u^{(t-1)}_{-r} - \rho \gamma^{(t-1)}_{-r}\right),
  \nonumber
\end{align}
which is then plugged back in the first equation to give 
\begin{align}
  2 \frac{1}{ \beta_r } + A \beta_r + B = 0,
  \nonumber
\end{align}
where
\begin{align}
  &A = 4 S^{(r)}_{r, -r} \left( 2 S^{(r)}_{-r,-r} + \rho I \right)^{-1} S^{(r)}_{-r,r} 
  - 2 S^{(r)}_{r,r} - \rho,
  \nonumber\\
  &B = 2 S^{(r)}_{r,-r} \left( 2 S^{(r)}_{-r,-r} + \rho I \right)^{-1}\left( u^{(t-1)}_{-r} 
  - \rho \gamma^{(t-1)}_{-r} \right) - u^{(t-1)}_r + \rho \gamma^{(t-1)}_r.
  \nonumber
\end{align}
Solving for $\beta_r$ gives the closed-form update.

\section{Dual Problem of \eqref{comp:updategamma}} \label{online:dualproblem}
\begin{lemma}   
  A dual problem of \eqref{comp:updategamma} is
  \begin{align}
    \min_{a^{(\ell)}\in \real^r} \left\{ \norm{ y^{(t)} - \frac{\lambda}{\rho} 
    \sum_{\ell=1}^{r-1} W^{(\ell)} \ast a^{(\ell)}}_2^2 
    \st \norm{ \left( a^{(\ell)} \right)_{g_{r,\ell}} }_2 \leq 1,
    \quad\left( a^{(\ell)} \right)_{g_{r,\ell}^c} = 0  \right\},
    \label{comp:dual}
  \end{align}
  where $y^{(t)} = \beta^{(t)} + \frac{1}{\rho}u^{(t-1)}$.
  Also, given a solution $\hat{a}^{(1)},\dots,\hat{a}^{(r-1)}$, the solution
  to \eqref{comp:updategamma} can be written as
  \begin{align}
    \gamma^{(t)} = y^{(t)} -
    \frac{\lambda}{\rho} \sum_{\ell=1}^{r-1} W^{(\ell)} \ast 
    \hat{a}^{(\ell)}.
    \label{comp:primaldualrelation}
  \end{align}
\end{lemma}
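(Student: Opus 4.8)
The plan is to derive \eqref{comp:dual} by dualizing the group-norm penalty through the variational characterization of the $\ell_2$ norm and then exchanging the order of minimization and maximization. Writing $y^{(t)} = \beta^{(t)} + \rho^{-1} u^{(t-1)}$, the objective in \eqref{comp:updategamma} becomes $\frac{\rho}{2} \norm{\gamma - y^{(t)}}_2^2 + \lambda \sum_{\ell=1}^{r-1} \norm{W^{(\ell)} \ast \gamma_{g_{r,\ell}}}_2$. Since the $\ell_2$ norm is self-dual, I would rewrite each penalty term as $\norm{W^{(\ell)} \ast \gamma_{g_{r,\ell}}}_2 = \max \{ \langle a^{(\ell)}, W^{(\ell)} \ast \gamma \rangle : \norm{(a^{(\ell)})_{g_{r,\ell}}}_2 \leq 1,\ (a^{(\ell)})_{g_{r,\ell}^c} = 0 \}$, where each $a^{(\ell)} \in \real^r$ is a dual variable supported on $g_{r,\ell}$. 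Because element-wise multiplication is symmetric under the inner product, $\langle a^{(\ell)}, W^{(\ell)} \ast \gamma \rangle = \langle W^{(\ell)} \ast a^{(\ell)}, \gamma \rangle$, so \eqref{comp:updategamma} turns into the saddle-point problem $\min_{\gamma} \max_{\{a^{(\ell)}\}} \{ \frac{\rho}{2} \norm{\gamma - y^{(t)}}_2^2 + \lambda \langle \sum_{\ell} W^{(\ell)} \ast a^{(\ell)}, \gamma \rangle \}$ over the compact feasible set for the $a^{(\ell)}$.

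Next I would justify exchanging $\min_{\gamma}$ and $\max_{a}$. The objective is strictly convex (indeed coercive) in $\gamma$ and linear, hence concave, in the collection $\{a^{(\ell)}\}$, and the feasible set for $\{a^{(\ell)}\}$ is convex and compact; Sion's minimax theorem therefore applies and strong duality holds with attainment on both sides. After the exchange, I would solve the now-unconstrained inner minimization over $\gamma$: the gradient condition $\rho(\gamma - y^{(t)}) + \lambda \sum_{\ell} W^{(\ell)} \ast a^{(\ell)} = 0$ gives $\gamma^{\star} = y^{(t)} - \frac{\lambda}{\rho} \sum_{\ell} W^{(\ell)} \ast a^{(\ell)}$, which is precisely the primal-dual relation \eqref{comp:primaldualrelation}.

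Finally I would substitute $\gamma^{\star}$ back. Writing $d = \sum_{\ell} W^{(\ell)} \ast a^{(\ell)}$ so that $\gamma^{\star} - y^{(t)} = -\frac{\lambda}{\rho} d$, a short computation yields the concave objective $\lambda \langle d, y^{(t)} \rangle - \frac{\lambda^2}{2\rho} \norm{d}_2^2$. Maximizing this over the constraints is, after scaling by the positive constant $2/\rho$ and adding the $\{a^{(\ell)}\}$-independent term $\norm{y^{(t)}}_2^2$, the same as minimizing $\norm{y^{(t)} - \frac{\lambda}{\rho} d}_2^2$ subject to the same constraints, which is exactly \eqref{comp:dual}. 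I expect the only delicate point to be the justification of the min-max exchange (equivalently, verifying strong duality and attainment); once that is in place the remaining steps are routine quadratic algebra, with only the element-wise-multiplication bookkeeping requiring care. As an alternative that sidesteps Sion's theorem, one could form the Lagrangian of \eqref{comp:updategamma} directly after introducing epigraph variables for each group norm, but the self-duality route is cleaner and makes the primal-dual map \eqref{comp:primaldualrelation} transparent.
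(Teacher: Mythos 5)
Your proposal is correct and follows essentially the same route as the paper: dualize each group norm via the self-duality of the $\ell_2$ norm, exchange $\min$ and $\max$, solve the inner quadratic in $\gamma$ to obtain the primal-dual relation \eqref{comp:primaldualrelation}, and substitute back to arrive at \eqref{comp:dual}. The only difference is that you explicitly justify the min--max exchange via Sion's theorem, a step the paper performs without comment.
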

\begin{proof}
  Note that 
  \begin{align}
    \sqrt{\sum_{m=1}^\ell w_{\ell m}^2 \gamma_m^2} &= 
    \norm{ \left( W^{(\ell)}\ast \gamma  \right)_{g_{r,\ell}} }_2 \nonumber\\
    &= \max \left\{ \left< W^{(\ell)} \ast a^{(\ell)}, \gamma \right> ,
    \st \norm{ \left( a^{(\ell)} \right)_{g_{r,\ell}} }_2 \leq 1, 
    \quad \left( a^{(\ell)} \right)_{g_{r,\ell}^c} = 0   \right\}.
    \nonumber
  \end{align}

  Thus, the minimization problem in \eqref{comp:updategamma} becomes 
  \begin{align}
    &\min_{\gamma} \left\{ \half \norm{ \gamma -  y^{(t)} }_2^2 + 
    \frac{\lambda}{\rho} \sum_{\ell=1}^{r-1} 
    \norm{ \left( W^{(\ell)}\ast \gamma  \right)_{g_{r,\ell}}  }_2 \right\} 
    \nonumber\\
    =&\min_{\gamma} \left\{ \max_{a^{(\ell)}} \left\{ \half 
      \norm{ \gamma - y^{(t)} }_2^2 + \frac{\lambda}{\rho} \sum_{\ell=1}^{r-1}  
      \left< W^{(\ell)} \ast a^{(\ell)}, \gamma \right> ,
      \norm{ \left( a^{(\ell)} \right)_{g_{r,\ell}} }_2 \leq 1, 
      \left( a^{(\ell)} \right)_{g_{r,\ell}^c} = 0 \right\} \right\} 
      \nonumber\\
      =&\max_{a^{(\ell)}}\left\{ \min_{\gamma} \left\{ \half\norm{ \gamma - 
        y^{(t)} }_2^2 + \frac{\lambda}{\rho} \sum_{\ell=1}^{r-1}  
        \left< W^{(\ell)} \ast a^{(\ell)}, \gamma \right> ,
        \norm{ \left( a^{(\ell)} \right)_{g_{r,\ell}} }_2 \leq 1, 
        \left( a^{(\ell)} \right)_{g_{r,\ell}^c} = 0 \right\} \right\},
        \nonumber
      \end{align}
      where $y^{(t)} = \beta^{(t)} + \frac{1}{\rho}u^{(t-1)}$.
      We solve the inner minimization problem by setting the derivative to zero,
      \[
        \gamma - y^{(t)} + \frac{\lambda}{\rho} \sum_{\ell=1}^{r-1}  
        W^{(\ell)} \ast a^{(\ell)} = 0,
      \]
      which gives the primal-dual relation,
      \[
        \gamma = -\frac{\lambda}{\rho} \sum_{\ell=1}^{r-1} W^{(\ell)} 
        \ast a^{(\ell)} + y^{(t)}.
      \]

      Using this gives
      \begin{align}
        \min_{\gamma} &\left\{ \half\norm{ \gamma - y^{(t)} }_2^2 + 
        \frac{\lambda}{\rho} \sum_{\ell=1}^{r-1} 
        \norm{ \left( W^{(\ell)}\ast \gamma  \right)_{g_{r,\ell}}  }_2 \right\} 
        \nonumber\\
        =\max_{a^{(\ell)}} &\left\{ \half \norm{ - \frac{\lambda}{\rho} 
        \sum_{\ell=1}^{r-1} W^{(\ell)} \ast a^{(\ell)} }_2^2 + 
        \frac{\lambda}{\rho} \sum_{\ell=1}^{r-1}  \left< W^{(\ell)} 
        \ast a^{(\ell)}, - \frac{\lambda}{\rho} \sum_{\ell=1}^{r-1} 
        W^{(\ell)} \ast a^{(\ell)} + y^{(t)} \right> \right.
        \nonumber\\
        &\left. \st \norm{ \left( a^{(\ell)} \right)_{g_{r,\ell}} }_2 \leq 1,
        \quad\left( a^{(\ell)} \right)_{g_{r,\ell}^c} = 0 \right\}
        \nonumber\\
        =\min_{a^{(\ell)}} & \left\{\norm{ y^{(t)} -
        \frac{\lambda}{\rho} \sum_{\ell=1}^{r-1} 
        W^{(\ell)} \ast a^{(\ell)}}_2^2 \st 
        \norm{ \left( a^{(\ell)} \right)_{g_{r,\ell}} }_2 \leq 1,
        \quad\left( a^{(\ell)} \right)_{g_{r,\ell}^c} = 0 \right\}.
        \nonumber
      \end{align}
    \end{proof}
    \begin{algorithm}
      \caption{BCD on the dual problem \eqref{comp:dual}}
      \begin{algorithmic}[1]
        \State Let $y^{(t)} = \beta^{(t)} + \frac{1}{\rho}u^{(t-1)}$ 
        \State Initialize $\hat{a}^{(\ell)} \gets 0$ for all $\ell = 1,\cdots,r-1$ \;
        \For{$\ell = 1,\cdots,r-1$}
        \State
        $\hat{z}^{(\ell)} \gets y^{(t)} - \frac{\lambda}{\rho}\sum_{k=1}^{r-1} 
        W^{(k)} \ast \hat{a}^{(k)}$\;
        Find a root $\hat{\nu}_\ell$ that satisfies
        \begin{align}
          h_\ell(\nu) := \sum_{m=1}^\ell \frac{w_{\ell m}^2}
          {\left( w_{\ell m}^2 + \nu \right)^2} \left( \hat{z}^{(\ell)}_m \right)^2
          =\frac{\lambda^2}{\rho^2} 
          \label{comp:rootfunction}
        \end{align}
        \For{$m = 1,\cdots,\ell$}
        \State
        $\hat{a}^{(\ell)}_m \gets \frac{w_{\ell m}}{\frac{\lambda}{\rho}\left( w_{\ell m}^2
        + \left[ \hat{\nu}_\ell \right]_+\right)} \hat{z}^{(\ell)}_m$ \;
      \EndFor
    \EndFor
    \State \Return{$\left\{ \hat{a}^{(\ell)} \right\}$ as a solution 
    to \eqref{comp:dual}} \;
    \State \Return{$ \gamma^{(t)}
    = y^{(t)}-\frac{\lambda}{\rho}\sum_{\ell=1}^{r-1} W^{(\ell)} \ast \hat{a}^{(\ell)}$
    as a solution to \eqref{comp:updategamma}} \;
  \end{algorithmic}
  \label{online:alg:BCDondual}
\end{algorithm}

\section{Elliptical Projection} \label{online:ellipticalprojection}
We adapt the same procedure as in Appendix B of \citet{bien2015convex} to update one $a^{(\ell)}$ in Algorithm \eqref{online:alg:BCDondual}. 
By \eqref{comp:updategamma} we need to solve a problem of the form
\begin{align}
  \min_{a \in \real^{\ell}} \norm{\hat{z}^{(\ell)} - \tau D a}_2^2 
  \st \norm{a}_2 \leq 1,
  \nonumber
\end{align}
where $\tau = \frac{\lambda}{\rho}$ and $D = \mathrm{diag}
(w_{\ell m})_{ m\leq \ell} \in \real^{\ell\times \ell}$.
If $\norm{D^{-1} \hat{z}^{(\ell)}}_2 \leq \tau$, then clearly
$\hat{a} = \frac{1}{\tau}D^{-1} \hat{z}^{(\ell)}$.
Otherwise, we use the Lagrangian multiplier method to solve the constrained
minimization problem above. Specifically, we find a stationary point of 
\begin{align}
  \mathcal{L}\left( a,\nu \right) = \norm{\hat{z}^{(\ell)} - \tau D a}_2^2 + 
  \nu \tau^2 \left( \norm{a}_2^2 -1 \right).
  \nonumber
\end{align}
Taking the derivative with respect to $a$ and set it equal to zero, we have
\begin{align}
  \hat{a}_m = \frac{w_{\ell m}}{\tau(w_{\ell m}^2 + \hat{\nu})} \hat{z}^{(\ell)}_m,
  \nonumber
\end{align}
for each $m \leq \ell$, and $\hat{\nu}$ is such that $\norm{\hat{a}}_2 = 1$, which means it 
satisfies \eqref{comp:rootfunction}.
By observing that $h_\ell(\nu)$ is a decreasing function of $\nu$ and 
$w_{\ell\ell} = \max_{m \leq \ell}w_{\ell m}$, following Appendix B of \citet{bien2015convex},
we obtain lower and upper bounds for $\hat{\nu}$:
\begin{align}
  \left[ \frac{1}{\tau} \norm{D\hat{z}^{(\ell)}}_2 - w_{\ell\ell}^2 \right]_+
  \leq \hat{\nu} \leq \frac{1}{\tau} \norm{D\hat{z}^{(\ell)}}_2,
  \nonumber
\end{align}
which can be used as an initial interval for finding $\hat{\nu}$ using Newton's method.
In practice, we usually find $\hat{\nu}$ from the equation
$\frac{1}{h(\nu)} = \tau^{-2}$ for better numerical stability.

We end this section with a characterization of the solution to \eqref{comp:updategamma},
which says that the solution can be written as 
$\gamma^{(t)} = y^{(t)} \ast \hat{t}$,
where $\hat{t}$ is some data-dependent vector in $\real^{r}$.

\begin{theorem} \label{thm:comp:tapering}
  A solution to \eqref{comp:updategamma} can be written as
  $ \gamma^{(t)}= y^{(t)} \ast \hat{g}$, 
  where the data-dependent vector $\hat{g} \in \real^{r}$ is given by
  \begin{align}
    \hat{g}_m = \prod_{\ell = m}^{r-1} \frac{\left[ \hat{\nu}_\ell \right]_+}
    {w_{\ell m}^2 + \left[ \hat{\nu}_\ell \right]_+} 
    \nonumber
  \end{align}
  and $\hat{g}_r = 1$,
  where $\hat{\nu}_\ell$ satisfies $\tau^2 
  = \sum_{m=1}^\ell \frac{w_{\ell m}^2}
  {\left( w_{\ell m}^2 + \nu \right)^2} \left( \hat{z}^{(\ell)}_m \right)^2$.
\end{theorem}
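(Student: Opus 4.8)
The plan is to work from the primal--dual relation \eqref{comp:primaldualrelation} and to track how the running residual $\hat{z}^{(\ell)}$ evolves as Algorithm \ref{online:alg:BCDondual} sweeps once through the groups $\ell = 1, \dots, r-1$. By \eqref{comp:primaldualrelation}, any optimal dual point gives $\gamma^{(t)} = y^{(t)} - \tau\sum_{\ell=1}^{r-1} W^{(\ell)}\ast\hat{a}^{(\ell)}$ with $\tau = \lambda/\rho$, so it suffices to understand the cumulative effect of the $\hat{a}^{(\ell)}$ produced by the algorithm. I would introduce the running primal iterate $\gamma^{[\ell]} := y^{(t)} - \tau\sum_{k \le \ell} W^{(k)}\ast\hat{a}^{(k)}$, with $\gamma^{[0]} = y^{(t)}$ and $\gamma^{[r-1]} = \gamma^{(t)}$, and observe that, because $\hat{a}^{(k)} = 0$ for the not-yet-processed groups $k > \ell$, the residual used in the algorithm coincides with this iterate, i.e. $\hat{z}^{(\ell)} = \gamma^{[\ell-1]}$.

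The first key step is to show that a single group update is a coordinatewise multiplicative shrinkage. Substituting the elliptical-projection formula $\hat{a}^{(\ell)}_m = \frac{w_{\ell m}}{\tau(w_{\ell m}^2 + [\hat{\nu}_\ell]_+)}\hat{z}^{(\ell)}_m$ (for $m \le \ell$) into $\gamma^{[\ell]} = \gamma^{[\ell-1]} - \tau W^{(\ell)}\ast\hat{a}^{(\ell)}$, and using $\hat{z}^{(\ell)}_m = \gamma^{[\ell-1]}_m$, gives for each $m \le \ell$
\begin{align}
  \gamma^{[\ell]}_m = \gamma^{[\ell-1]}_m\left(1 - \frac{w_{\ell m}^2}{w_{\ell m}^2 + [\hat{\nu}_\ell]_+}\right) = \frac{[\hat{\nu}_\ell]_+}{w_{\ell m}^2 + [\hat{\nu}_\ell]_+}\,\gamma^{[\ell-1]}_m,
  \nonumber
\end{align}
while for $m > \ell$ the update leaves the coordinate untouched, since $g_{r,\ell} = \{1,\dots,\ell\}$ and $\hat{a}^{(\ell)}$ is supported there. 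The nesting $g_{r,1}\subset\cdots\subset g_{r,r-1}$ is exactly what makes this bookkeeping clean: coordinate $m$ is modified precisely by the groups $\ell = m, m+1, \dots, r-1$, and in that order. Chaining the single-step identities by induction on $\ell$ then yields $\gamma^{(t)}_m = y^{(t)}_m\prod_{\ell=m}^{r-1}\frac{[\hat{\nu}_\ell]_+}{w_{\ell m}^2 + [\hat{\nu}_\ell]_+}$ for $m \le r-1$, and $\gamma^{(t)}_r = y^{(t)}_r$ since no group contains index $r$ (empty product). This is precisely $\gamma^{(t)} = y^{(t)}\ast\hat{g}$ with $\hat{g}$ as stated, and with $\hat{\nu}_\ell$ solving \eqref{comp:rootfunction}, which is the displayed fixed-point equation.

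The main obstacle is justifying that the single forward sweep of Algorithm \ref{online:alg:BCDondual} actually returns an optimal dual point, so that \eqref{comp:primaldualrelation} applies to the algorithm's output and $\gamma^{(t)}$ is genuinely a solution of \eqref{comp:updategamma}. For a general group structure, block coordinate ascent on the dual would require many sweeps; what rescues us is that the groups form a totally ordered chain, a special case of a tree-structured penalty. I would invoke the result of \citet{jenatton2011structured}, in the weighted form adapted by \citet{bien2015convex}, which guarantees that for tree-structured groups the proximal operator equals the composition of the per-group proximal operators applied from the smallest (innermost) group to the largest; here that ordering is exactly $\ell = 1,\dots,r-1$. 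Granting this exactness, each elliptical projection (one application of Newton's method solving \eqref{comp:rootfunction}) is the per-group prox, and their composition is what the running-residual recursion above computes, completing the proof. The only remaining checks are routine: that $h_\ell$ is strictly decreasing so that $\hat{\nu}_\ell$ is well defined, and that $[\,\cdot\,]_+$ correctly encodes the inactive-constraint case $\norm{D^{-1}\hat{z}^{(\ell)}}_2 \le \tau$ discussed in the elliptical-projection derivation (in which the factor reduces to $\gamma^{[\ell]}_m = 0$ on the relevant coordinates).
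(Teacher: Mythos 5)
Your proposal is correct and follows essentially the same route as the paper's proof: both track the running residual $\hat{z}^{(\ell)}$ through the single forward sweep, reduce each group update to the multiplicative factor $[\hat{\nu}_\ell]_+/(w_{\ell m}^2+[\hat{\nu}_\ell]_+)$ via the elliptical-projection formula, chain these into the product, and appeal to \citet{jenatton2011structured} for the exactness of one pass over the nested (chain-structured) groups. Your version merely spells out the bookkeeping and the one-pass justification in somewhat more detail than the paper does.
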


\begin{proof} \label{proof:comp:tapering}
  By \cite{jenatton2011structured}, we can get a solution to 
  \eqref{comp:updategamma} in a single pass as described in Algorithm 
  \ref{online:alg:BCDondual}. If we start from
  $\hat{z}^{(1)} = y^{(t)}$, then for $\ell = 1,\cdots,r-1$ and each $m \leq \ell$, 
  \begin{align}
    \hat{z}^{(\ell+1)}_m = \hat{z}^{(\ell)}_m - \tau 
    w_{\ell m} \hat{a}^{(\ell)}_m  = \frac{\left[ \hat{\nu}_\ell \right]_+}
    {w_{\ell m}^2 + \left[ \hat{\nu}_\ell \right]_+} \hat{z}^{(\ell)}_m.
    \nonumber
  \end{align}
  By \eqref{comp:primaldualrelation}, $ \gamma^{(t)} = \hat{z}^{(r-1)}$,
  and the result follows.
\end{proof}

A key observation from this characterization is that a banded sparsity pattern is induced
in solving \eqref{comp:updategamma}, which in turn implies the same property of the output of Algorithm \ref{alg:ADMM}.
\begin{corollary}
  A solution $\gamma^{(t)}$ to \eqref{comp:updategamma} has banded sparsity, i.e.,
  $\left( \gamma^{(t)} \right)_{1:\hat{J}} = 0$ for $\hat{J} = 
  \max\left\{ \ell:\hat{\nu}_\ell \leq 0 \right\}$.
\end{corollary}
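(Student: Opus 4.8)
The plan is to read the result directly off the closed-form expression for the tapering vector supplied by Theorem \ref{thm:comp:tapering}. First I would invoke that theorem to write any solution as $\gamma^{(t)} = y^{(t)} \ast \hat{g}$, where for $m \leq r-1$
\[
  \hat{g}_m = \prod_{\ell = m}^{r-1} \frac{[\hat{\nu}_\ell]_+}{w_{\ell m}^2 + [\hat{\nu}_\ell]_+}
\]
and $\hat{g}_r = 1$. Since the element-wise product gives $(\gamma^{(t)})_m = y^{(t)}_m \hat{g}_m$, it suffices to show that $\hat{g}_m = 0$ for every $m \leq \hat{J}$.

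The key observation is that each factor in the product is well-defined and nonnegative: because the weights satisfy $w_{\ell m} > 0$, the denominator $w_{\ell m}^2 + [\hat{\nu}_\ell]_+$ is strictly positive, so the $\ell$-th factor equals zero precisely when $[\hat{\nu}_\ell]_+ = 0$, i.e. exactly when $\hat{\nu}_\ell \leq 0$. Next I would fix any index $m \leq \hat{J}$ and note that the product defining $\hat{g}_m$ ranges over $\ell \in \{m, m+1, \dots, r-1\}$, a set that contains $\hat{J}$ since $m \leq \hat{J} \leq r-1$. By the definition $\hat{J} = \max\{\ell : \hat{\nu}_\ell \leq 0\}$ we have $\hat{\nu}_{\hat{J}} \leq 0$, so the factor at $\ell = \hat{J}$ vanishes and therefore $\hat{g}_m = 0$. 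This forces $(\gamma^{(t)})_m = 0$ for every $m \leq \hat{J}$, which is exactly the claimed banded sparsity $(\gamma^{(t)})_{1:\hat{J}} = 0$.

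There is essentially no analytical obstacle here; the corollary is an immediate consequence of the product structure of $\hat{g}$, with the positive-part operation doing all of the work. The only point requiring mild care is the degenerate case in which the set $\{\ell : \hat{\nu}_\ell \leq 0\}$ is empty: there one should adopt the convention $\hat{J} = 0$, corresponding to no forced zeros, so that the statement holds vacuously.
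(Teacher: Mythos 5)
Your proof is correct and follows exactly the route the paper intends: the corollary is an immediate consequence of the product formula for $\hat{g}$ in Theorem \ref{thm:comp:tapering}, since for any $m \leq \hat{J}$ the factor at $\ell = \hat{J}$ has numerator $[\hat{\nu}_{\hat{J}}]_+ = 0$ and strictly positive denominator, forcing $\hat{g}_m = 0$ and hence $(\gamma^{(t)})_m = 0$. Nothing further is needed.
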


\section{Uniqueness of the Sparse Row Estimator} \label{online:sparserow}
\begin{lemma}\label{lem:theory:opt_cond}(Optimality condition)
  For any $\lambda>0$ and a $n$-by-$p$ sample matrix $\mathbf{X}$, 
  $\hat{\beta}$ is a solution to the problem
  \[ 
    \min_{\beta \in \real^r} \left\{ -2\log \beta_r + 
      \frac{1}{n} \norm{\mathbf{X}_{1:r} \beta }_2^2 + \lambda \sum_{\ell = 1}^{r-1} 
      \sqrt{\sum_{m=1}^\ell w_{\ell m}^2 \beta_{m}^2}\right\}
    \]
    if and only if there 
    exist $\hat{a}^{(\ell)} \in \real^{r}$ for $\ell = 1,\dots,r-1$ such that
    \begin{align}
      -\frac{2}{\hat{\beta}_r} \mathbf{e}_r + \frac{2}{n} \mathbf{X}_{1:r}^T
      \mathbf{X}_{1:r} \hat{\beta} + 
      \lambda \sum_{\ell = 1}^{r-1} W^{(\ell)} \ast \hat{a}^{(\ell)} = 0 
      \label{theory:opt_cond}
    \end{align}
    with $\left( \hat{a}^{(\ell)} \right)_{g_{r,\ell}^c}=0$, 
    $\left( \hat{a}^{(\ell)} \right)_{g_{r,\ell}} = 
    \frac{\left( W^{(\ell)}\ast \hat \beta  
    \right)_{g_{r,\ell}}}{\left\| \left( W^{(\ell)}\ast \hat \beta \right)_{g_{r,\ell}}
  \right\|_2}$ for $\hat \beta _{g_{r,\ell}} \neq 0 $ and 
  $\left\| \left( \hat{a}^{(\ell)} \right)_{g_{r,\ell}} \right\|_2 \leq 1$ 
  for $\hat{\beta}_{g_{r,\ell}} = 0$.
\end{lemma}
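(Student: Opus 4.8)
The plan is to recognize that the objective in the displayed minimization problem is a closed proper convex function of $\beta$: the term $-2\log\beta_r$ equals $+\infty$ whenever $\beta_r\le 0$, so its effective domain is the open convex set $\{\beta_r>0\}$, and the remaining two terms are finite and convex on all of $\real^r$. For such a function, $\hat\beta$ is a global minimizer if and only if $\mathbf{0}$ belongs to the subdifferential of the objective at $\hat\beta$. This single fact supplies both directions of the ``if and only if'' at once, so the whole proof reduces to computing this subdifferential explicitly and checking that the zero-inclusion is precisely condition \eqref{theory:opt_cond}.

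First I would split the objective into a smooth part $f_1(\beta)=-2\log\beta_r+\frac1n\norm{\mathbf{X}_{1:r}\beta}_2^2$ and the nonsmooth penalty $f_2(\beta)=\lambda\sum_{\ell=1}^{r-1}\norm{(W^{(\ell)}\ast\beta)_{g_{r,\ell}}}_2$. On the open domain $\{\beta_r>0\}$ the function $f_1$ is differentiable, with $\nabla f_1(\beta)=-\tfrac{2}{\beta_r}\mathbf{e}_r+\tfrac2n\mathbf{X}_{1:r}^T\mathbf{X}_{1:r}\beta$, which accounts for the first two terms of \eqref{theory:opt_cond}. Because $f_2$ is finite everywhere, the Moreau--Rockafellar sum rule applies with no constraint qualification needed and gives $\partial(f_1+f_2)(\beta)=\nabla f_1(\beta)+\partial f_2(\beta)$, so it remains only to describe $\partial f_2$.

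The next step is to compute the subdifferential of a single group term $h_\ell(\beta)=\norm{(W^{(\ell)}\ast\beta)_{g_{r,\ell}}}_2$. Writing $h_\ell=\phi\circ D_\ell$, where $\phi(\cdot)=\norm{\cdot}_2$ is the Euclidean norm on $\real^r$ and $D_\ell=\diag(w_{\ell 1},\dots,w_{\ell\ell},0,\dots,0)$ is symmetric, the chain rule for the composition of a convex function with a linear map yields $\partial h_\ell(\beta)=D_\ell\,\partial\phi(D_\ell\beta)$. Using the standard subdifferential of the Euclidean norm --- the singleton $\{u/\norm{u}_2\}$ when $u\ne\mathbf{0}$ and the unit ball $\{z:\norm{z}_2\le1\}$ when $u=\mathbf{0}$ --- and setting $W^{(\ell)}\ast a^{(\ell)}:=D_\ell z$ for the chosen $z\in\partial\phi(D_\ell\beta)$ (with $a^{(\ell)}$ supported on $g_{r,\ell}$), one recovers exactly the two stated cases: when $\beta_{g_{r,\ell}}\ne\mathbf{0}$ the element $D_\ell z$ equals $W^{(\ell)}\ast a^{(\ell)}$ with $(a^{(\ell)})_{g_{r,\ell}}=(W^{(\ell)}\ast\beta)_{g_{r,\ell}}/\norm{(W^{(\ell)}\ast\beta)_{g_{r,\ell}}}_2$, and when $\beta_{g_{r,\ell}}=\mathbf{0}$ one obtains the ball constraint $\norm{(a^{(\ell)})_{g_{r,\ell}}}_2\le1$; in both cases the zeroed trailing block of $D_\ell$ forces $(a^{(\ell)})_{g_{r,\ell}^c}=\mathbf{0}$. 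Summing these over $\ell$, scaling by $\lambda$, adding $\nabla f_1$, and setting the total to $\mathbf{0}$ reproduces \eqref{theory:opt_cond} verbatim.

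The main obstacle is the careful bookkeeping in the penalty's subgradient: one must verify that an arbitrary element of $D_\ell\,\partial\phi(D_\ell\beta)$ can indeed be written in the weighted form $W^{(\ell)}\ast a^{(\ell)}$ and is normalized correctly (note the weights $w_{\ell m}$ appear squared after applying $D_\ell$ twice, and dividing by $\norm{D_\ell\beta}_2=\norm{(W^{(\ell)}\ast\beta)_{g_{r,\ell}}}_2$ restores a single power), and that the support restriction $(a^{(\ell)})_{g_{r,\ell}^c}=\mathbf{0}$ is harmless to impose rather than an added constraint --- it holds because only the $g_{r,\ell}$-coordinates of any $z$ survive multiplication by $D_\ell$, so those components are free. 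The remaining ingredients (convexity, the sum rule, and differentiability of $f_1$ on the open domain) are routine.
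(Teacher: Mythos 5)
Your proof is correct, but it follows a genuinely different route from the paper's. You argue directly via subdifferential calculus: the objective is a closed proper convex function whose effective domain forces $\hat\beta_r>0$, the smooth part is differentiable there, the Moreau--Rockafellar sum rule needs no constraint qualification because the penalty is finite everywhere, and the subdifferential of each group term $\norm{D_\ell\beta}_2$ with $D_\ell=\diag(w_{\ell 1},\dots,w_{\ell\ell},0,\dots,0)$ is $D_\ell\,\partial\norm{\cdot}_2(D_\ell\beta)$, which after the reparametrization $W^{(\ell)}\ast a^{(\ell)}=D_\ell z$ yields exactly the two cases in the lemma (your bookkeeping on the squared weights and the support restriction is right). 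The paper instead introduces auxiliary variables $\tau=\beta_r$ and $z=\mathbf{X}_{1:r}\beta$, forms a Lagrangian, derives the dual problem and the primal--dual relations, and then pins down the normalization of $\hat a^{(\ell)}$ on active groups through a Cauchy--Schwarz equality argument. Your approach is shorter and more self-contained for establishing \eqref{theory:opt_cond} itself; the paper's duality detour buys extra structure that is reused downstream --- in particular, the strict convexity of the Lagrangian in $\tau$ and $z$ is what later forces $\hat\beta_r$ and $\mathbf{X}_{1:r}\hat\beta$ to agree across all minimizers in the proofs of Lemmas \ref{lem:theory:moresparse} and \ref{lem:theory:uniqueness}, facts your argument does not produce by itself (though they can be recovered separately). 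As a proof of this lemma alone, yours is complete and arguably cleaner.
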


\begin{lemma} \label{lem:theory:moresparse}
  Take $\hat{\beta}$ and $\hat{a}^{(\ell)}$ as in the previous lemma. 
  Suppose that
  \[
    \left\| \left(\hat{a}^{(\ell)}\right)_{g_{r,\ell}} \right\|_2 < 1 \quad 
    \quad \text{ for } \quad \ell = 1,\dots,J( \hat{\beta} )
  \]
  then for any other solution $\tilde{\beta}$ to \eqref{est:subproblem}, 
  it is as sparse as $\hat{\beta}$ if not more. 
  In other words,
  \[
    K (\tilde{\beta}) \leq \hat{K}_r.
  \]
\end{lemma}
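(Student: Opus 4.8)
The plan is to exploit convexity of the solution set of \eqref{est:subproblem} together with the dual certificate $\{\hat{a}^{(\ell)}\}$ supplied by Lemma \ref{lem:theory:opt_cond}. Write the objective as $F = f + P$, where $f(\beta) = -2\log\beta_r + \frac1n\norm{\mathbf{X}_{1:r}\beta}_2^2$ is smooth and $P(\beta) = \lambda\sum_{\ell=1}^{r-1}\norm{(W^{(\ell)}\ast\beta)_{g_{r,\ell}}}_2$ is a seminorm (positively homogeneous, convex, vanishing only on the $\beta_r$-axis). Let $\tilde\beta$ be any other minimizer. Since the feasible set and $F$ are convex, the midpoint $\bar\beta = \half(\hat\beta+\tilde\beta)$ is again a minimizer with $\bar\beta_r>0$.

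First I would show that every minimizer shares the same value of $\beta_r$ and the same fitted vector $\mathbf{X}_{1:r}\beta$. Applying the midpoint identity to $F$ and using that each of its three pieces is convex, the equality $F(\bar\beta)=\half F(\hat\beta)+\half F(\tilde\beta)$ forces equality in each piece separately. Strict convexity of $\beta_r\mapsto -2\log\beta_r$ then gives $\hat\beta_r=\tilde\beta_r$, while the quadratic identity $\norm{\mathbf{X}_{1:r}\bar\beta}_2^2 = \half\norm{\mathbf{X}_{1:r}\hat\beta}_2^2+\half\norm{\mathbf{X}_{1:r}\tilde\beta}_2^2-\tfrac14\norm{\mathbf{X}_{1:r}(\hat\beta-\tilde\beta)}_2^2$ forces $\mathbf{X}_{1:r}\hat\beta = \mathbf{X}_{1:r}\tilde\beta$. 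Consequently $\nabla f$ takes the identical value $-v^\ast$ at both solutions, where $v^\ast = \lambda\sum_{\ell}W^{(\ell)}\ast\hat{a}^{(\ell)}$ is the aggregate subgradient read off from \eqref{theory:opt_cond}; applying the optimality condition to $\tilde\beta$ then yields $v^\ast\in\partial P(\tilde\beta)$.

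The heart of the argument is a complementary-slackness / equality-in-Cauchy--Schwarz chain. Because $P$ is a seminorm and $v^\ast\in\partial P(\tilde\beta)$, we have $\langle v^\ast,\tilde\beta\rangle = P(\tilde\beta)$. Expanding $v^\ast$ through its certificate, using that each $\hat{a}^{(\ell)}$ is supported on $g_{r,\ell}$, and then invoking Cauchy--Schwarz together with the bounds $\norm{(\hat{a}^{(\ell)})_{g_{r,\ell}}}_2\le 1$, I obtain
\[ P(\tilde\beta)=\langle v^\ast,\tilde\beta\rangle=\lambda\sum_\ell\langle(\hat{a}^{(\ell)})_{g_{r,\ell}},(W^{(\ell)}\ast\tilde\beta)_{g_{r,\ell}}\rangle\le\lambda\sum_\ell\norm{(\hat{a}^{(\ell)})_{g_{r,\ell}}}_2\norm{(W^{(\ell)}\ast\tilde\beta)_{g_{r,\ell}}}_2\le P(\tilde\beta). \]
Hence every inequality is tight, and in particular the last one is tight term by term: for each $\ell$ either $\norm{(\hat{a}^{(\ell)})_{g_{r,\ell}}}_2=1$ or $\norm{(W^{(\ell)}\ast\tilde\beta)_{g_{r,\ell}}}_2=0$. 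Invoking the hypothesis $\norm{(\hat{a}^{(\ell)})_{g_{r,\ell}}}_2<1$ for $\ell=1,\dots,J(\hat\beta)$ forces $\norm{(W^{(\ell)}\ast\tilde\beta)_{g_{r,\ell}}}_2=0$ for those $\ell$; taking $\ell=J(\hat\beta)$ and using $w_{\ell m}>0$ gives $\tilde\beta_1=\dots=\tilde\beta_{J(\hat\beta)}=0$. Thus $\tilde\beta$ has at least as many leading zeros as $\hat\beta$, i.e.\ $K(\tilde\beta)\le \hat{K}_r$.

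I expect the main obstacle to be this first step rather than the certificate manipulation: one must argue carefully that in the rank-deficient regime $r>n$ (where $f$ is not strictly convex in $\beta$ and the minimizer need not be unique) the fitted values and $\beta_r$ are nonetheless invariant across the entire solution set. The midpoint-equality device handles this cleanly, but it is the one place requiring genuine care; once $v^\ast$ is identified as a subgradient common to all solutions, the equality chain and the strict-inequality hypothesis finish the argument essentially mechanically.
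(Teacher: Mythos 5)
Your proof is correct, and its second half (the Cauchy--Schwarz equality chain forcing $(W^{(\ell)}\ast\tilde\beta)_{g_{r,\ell}}=0$ whenever $\|(\hat a^{(\ell)})_{g_{r,\ell}}\|_2<1$) is exactly the paper's argument. Where you diverge is in how you establish the two preliminary invariances $\hat\beta_r=\tilde\beta_r$ and $\mathbf X_{1:r}\hat\beta=\mathbf X_{1:r}\tilde\beta$ and in how you then reach the key identity $\langle v^\ast,\tilde\beta\rangle=P(\tilde\beta)$. The paper works through the Lagrangian $\mathcal L(\tau,z,\beta;\nu,\phi,a^{(\ell)})$ built in the proof of Lemma \ref{lem:theory:opt_cond}: strict convexity of $\mathcal L$ in the auxiliary variables $\tau$ and $z$ makes $\hat\tau$ and $\hat z$ unique, the primal--dual relations transfer this to $\beta_r$ and the fitted values, and the identity $\sum_\ell\langle W^{(\ell)}\ast\hat a^{(\ell)},\tilde\beta\rangle=P(\tilde\beta)$ is extracted by sandwiching Lagrangian values at the two solutions. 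You instead stay entirely in the primal, using the midpoint/parallelogram device on the convex solution set to force equality in each convex piece of the objective, and then invoke the positive homogeneity of the penalty ($v^\ast\in\partial P(\tilde\beta)\Rightarrow\langle v^\ast,\tilde\beta\rangle=P(\tilde\beta)$) rather than any duality bookkeeping. Your route is more self-contained and arguably more transparent, since it does not require the reader to carry the reformulation in $(\tau,z)$ and the primal--dual relations from the previous appendix; the paper's route has the advantage of reusing machinery it has already set up and needs again for Lemma \ref{lem:theory:uniqueness}. Both are sound; there is no gap in your argument.
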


\begin{lemma} \label{lem:theory:uniqueness}(Uniqueness)
  Under the conditions of the previous lemma, 
  let $\hat{\mathcal S} = \left\{ i: \hat{\beta}_{i} \neq 0 \right\}$.
  If $\mathbf{X}_{\hat{\mathcal S}}$ 
  has full column rank (i.e., 
  $ \rank\left( \mathbf{X}_{\hat{\mathcal S}} \right) =  
  |\hat{\mathcal{S}}|$) 
  then $\hat{\beta}$ is unique. 
\end{lemma}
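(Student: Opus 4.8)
The plan is to reduce the uniqueness question to strict convexity of the objective over the coordinate subspace determined by the support $\hat{\mathcal S}$, where strict convexity is inherited from the full column rank of $\mathbf{X}_{\hat{\mathcal S}}$.

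First I would take an arbitrary second solution $\tilde\beta$ to \eqref{est:subproblem} and certify that its support is contained in $\hat{\mathcal S}$. By Lemma \ref{lem:theory:moresparse}, the hypotheses in force guarantee $K(\tilde\beta)\le \hat K_r$. Because the hierarchical group-lasso penalty forces the nonzeros of any solution to form a contiguous band ending at the diagonal index $r$ (as noted in Section \ref{sec:estimator}), the support of $\tilde\beta$ is the suffix $\{r-K(\tilde\beta),\dots,r\}$ of length $K(\tilde\beta)+1$, while $\hat{\mathcal S}=\{r-\hat K_r,\dots,r\}$. Hence $K(\tilde\beta)\le \hat K_r$ yields $\mathrm{supp}(\tilde\beta)\subseteq\hat{\mathcal S}$, so both $\hat\beta$ and $\tilde\beta$ lie in the convex set $V\cap\{\beta_r>0\}$, where $V=\{\beta\in\real^r:\beta_{\hat{\mathcal S}^c}=0\}$.

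Next I would show that the objective $f$ in \eqref{est:subproblem} is strictly convex on $V\cap\{\beta_r>0\}$. On $V$ the smooth part reads $-2\log\beta_r+\frac1n\norm{\mathbf{X}_{\hat{\mathcal S}}\beta_{\hat{\mathcal S}}}_2^2$; here $-2\log\beta_r$ is convex, the penalty is convex, and the quadratic data-fitting term has Hessian $\frac2n\mathbf{X}_{\hat{\mathcal S}}^T\mathbf{X}_{\hat{\mathcal S}}$, which is positive definite precisely because $\mathbf{X}_{\hat{\mathcal S}}$ has full column rank. Since a strictly convex function plus a convex function is strictly convex, $f$ is strictly convex on $V\cap\{\beta_r>0\}$. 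A strictly convex function has at most one minimizer on a convex set; as $\hat\beta$ and $\tilde\beta$ are global minimizers of $f$ both lying in $V\cap\{\beta_r>0\}$, they minimize $f$ there and therefore coincide, giving $\hat\beta=\tilde\beta$.

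The main obstacle is the first step, namely certifying $\mathrm{supp}(\tilde\beta)\subseteq\hat{\mathcal S}$; this is exactly what Lemma \ref{lem:theory:moresparse} (together with the suffix-band structure of the penalized solutions) is designed to deliver, and once it is in place the strict-convexity argument is routine. An equivalent route, still using this support containment, avoids the explicit restriction to $V$: if $\hat\beta$ and $\tilde\beta$ were distinct minimizers then $f$ would be affine along the segment joining them, forcing each convex summand to be affine there; strict convexity of $-2\log\beta_r$ gives $\hat\beta_r=\tilde\beta_r$, and affineness of the quadratic term forces $\mathbf{X}_{1:r}(\tilde\beta-\hat\beta)=0$, i.e. $\mathbf{X}_{\hat{\mathcal S}}(\tilde\beta-\hat\beta)_{\hat{\mathcal S}}=0$, which the full-rank hypothesis turns into $\tilde\beta=\hat\beta$.
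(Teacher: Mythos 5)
Your proposal is correct and follows essentially the same route as the paper: invoke Lemma \ref{lem:theory:moresparse} to confine any other solution to the coordinate subspace indexed by $\hat{\mathcal S}$, then note that on this subspace the quadratic term has Hessian $\frac{2}{n}\mathbf{X}_{\hat{\mathcal S}}^T\mathbf{X}_{\hat{\mathcal S}}$, which is positive definite by the full-column-rank hypothesis, so the restricted objective is strictly convex and the minimizer is unique. Your closing variant (affineness along the segment joining two putative minimizers) is an equivalent packaging of the same argument.
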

\begin{proof}
  See Appendices \ref{proof:opt_cond}, \ref{proof:moresparse}, and \ref{proof:uniqueness}.
\end{proof}

\section{Proof of Theorem \ref{thm:row}} \label{online:proofofrow}
We start with introducing notation. 
\textbf{From now on we suppress the dependence on $r$ in notation for simplicity}. 
We denote the group structure
$g_\ell = \left\{ 1,\cdots,\ell \right\}$ for $\ell \leq r$ for each $r = 1,\dots,p$. For any vector $\beta \in \real^r$,
we let $\beta_{g_\ell} \in \real^\ell$ be the vector with elements 
$\left\{ \beta_m: m \leq \ell \right\}$.
We also introduce the weight vector $W^{(\ell)} \in \real^p$ with
$\left( W^{(\ell)} \right)_m = w_{\ell m}$ where $w_{\ell m}$ can be defined
as in \eqref{est:generalweight} or $w_{\ell m} = 1$.
Finally recalling from Section \ref{sec:theory} the definition of
$\mathcal I$, we denote $\mathcal S = \mathcal I \cup \{r\} = 
\left\{ J+1,\dots,r \right\}$ and $\mathcal S^c = \left\{ 1, 2, \ldots, J \right\}$.

The general idea of the proof depends on the primal-dual witness procedure in 
\citet{wainwright2009sharp} and \citet{ravikumar2011high}. 
Considering the original problem \eqref{est:subproblem} for any
$r=2,\dots,p$, we construct the primal-dual 
witness solution pairs 
$\left( \tilde{ \beta }, \sum_{\ell=1}^{r-1} 
W^{(\ell)} \ast \tilde{a}^{(\ell)} \right)$ 
as follows: 

\begin{enumerate} \label{online:Primal-Dual}
  \item[(a)] 
    Solve the restricted subproblem with the true bandwidth
    $K = r-1- J$:
    \[
      \tilde{ \beta } = \argmin_{\substack {\beta_{r} > 0\\\beta _{\mathcal S^c}=0}}
      \left\{  -2 \log \beta_{r} + \frac{1}{n} \norm{\mathbf{X}_{1:r} \beta}_2^2 + 
      \lambda \sum_{\ell=1}^{r-1} 
      \left\| \left(W^{(\ell)} \ast \beta \right)_{g_{\ell}}  \right\|_2 \right\}.
    \]
    The solution above can be written as
    \[
      \tilde{\beta} = \begin{pmatrix}
        \mathbf{0}_{J}\\
        \tilde{\gamma}
      \end{pmatrix},
    \]
    where 
    \[
      \tilde{\gamma} = \argmin_{\gamma \in \real^{K+1}}
      \left\{ -2 \log \gamma_{K+1} + \frac{1}{n} 
      \norm{ \mathbf{X}_{\mathcal{S}}
    \gamma}_2^2 + \lambda \sum_{\ell = 1}^{K} 
    \norm{\left( \tilde{W}^{(\ell)} \ast \gamma \right)_{g_\ell}}_2\right\},
  \]
  with
  \[
    \tilde{W}^{(\ell)} = \left( W^{(\ell+ J)} \right)_{\mathcal{S}}
    \quad \iff \quad \sum_{\ell = 1}^{K} 
    \norm{\left( \tilde{W}^{(\ell)} \ast \gamma \right)_{g_\ell}}_2 =
    \sum_{\ell = J+1}^{r-1} \sqrt{\sum_{m=J+1}^{r-1} 
    w_{\ell m}^2 \gamma_{m-J}^2}.
  \]
\item [(b)] 
  By Lemma \ref{lem:theory:opt_cond}, there exist 
  $\tilde{b}^{(\ell)} \in \real^{K+1}$ for $\ell =1,\dots,K$,
  such that 
  $\left( \tilde{b}^{(\ell)} \right)_{g_{\ell}^c} = 0$ and 
  \[
    \left(\tilde{b}^{(\ell)}\right)_{g_{r\ell}} = 
    \frac{\left( \tilde{W}^{(\ell)} \ast \tilde{\gamma} \right)_{g_{\ell}} }
    {\left\| \left( \tilde{W}^{(\ell)} \ast \tilde{\gamma} \right)_{g_{\ell}} 
  \right\|_2},
\]
satisfying 
\[
  -\frac{2}{\tilde{\gamma}_{K+1}} \mathbf{e}_{K+1} + 
  \frac{2}{n} \mathbf{X}_{\mathcal{S}}^T \mathbf{X}_{\mathcal{S}}
  \tilde{\gamma} + 
  \lambda \sum_{\ell = 1}^{K} \tilde{W}^{(\ell)} \ast 
  \tilde{b}^{(\ell)} = 0.
\]

\item [(c)]
  For $\ell = J+1,\dots,r-1$, we let
  \[
    \tilde{a}^{(\ell)} = \begin{pmatrix}
      \mathbf{0}_{J} \\
      \tilde{b}^{(\ell-J)}
    \end{pmatrix}.
  \]
  Then we have $\left(\tilde{a}^{(\ell)}\right)_{g_{\ell}^c} = 0$,
  $\left\| \left(\tilde{a}^{(\ell)}\right)_{g_{\ell}} \right\|_2 \leq 1$,
  $\left(\tilde{a}^{(\ell)}\right)_{g_{\ell}} = 
  \frac{\left( W^{(\ell)} \ast \tilde{\beta} \right)_{g_{\ell}}}
  {\left\| \left( W^{(\ell)}\ast \tilde{\beta} \right)_{g_{\ell}} \right\|_2}$ 
  for $\tilde{\beta}_{g_{\ell}} \neq 0$.
\item[(d)]
  For each $\ell = 1,..., J$, we choose $\tilde{a}^{(\ell)} \in \real^{r}$ satisfying 
  \[
    \left(\tilde{a}^{(\ell)}\right)_{\ell'}= 0 \quad \text{ for any }
    \ell' \neq \ell \quad \text {and} \quad 
    \left(\tilde{a}^{(\ell)}\right)_{\ell} = 
    - \frac{2}{\lambda w_{\ell \ell}} 
    \left( S \tilde{\beta} \right)_{\ell} 
    = -\frac{2}{n \lambda} 
    \mathbf{X}^T_{\ell} \mathbf{X}_{\mathcal{S}} \tilde{\beta}_{\mathcal{S}} .
  \]

  By construction and the fact that $w_{\ell\ell} = 1$,
  \[
    \lambda \left( W^{(\ell)} \ast \tilde{a}^{(\ell)} \right)_{\ell} =
    \lambda w_{\ell \ell} \left( \tilde{a}^{(\ell)}\right)_{\ell} = 
    -2 \left( S \tilde{\beta} \right)_{\ell}.
  \]
  By Lemma \ref{lem:theory:opt_cond},
  $\left\{ \tilde{a}^{(\ell)} \right\}$ satisfies the optimality condition 
  \eqref{theory:opt_cond}:
  \begin{align}	
    -\frac{2}{\tilde{\beta}_{r}} \mathbf{e}_{r} + 
    \frac{2}{n} \mathbf{X}_{1:r}^T \mathbf{X}_{1:r} \tilde{\beta} + 
    \lambda \sum_{\ell = 1}^{r-1} W^{(\ell)} \ast 
    \tilde{a}^{(\ell)} = 0 
    \label{eq:optimality_pd}
  \end{align}

\item [(e)] 
  Verify the strict dual feasibility condition for  
  $\ell = 1,...,J$
  \begin{align}
    \left| \frac{2}{n \lambda} 
    \mathbf{X}^T_{\ell} \mathbf{X}_{\mathcal{S}} \tilde{\beta}_{\mathcal{S}}
    \right| = 
    \left| \left(\tilde{a}^{(\ell)}\right)_{\ell} \right| = 
    \left\| \left(\tilde{a}^{(\ell)}\right)_{g_{\ell}} \right\|_2 < 1.
    \label{SDF} 
  \end{align}
\end{enumerate}

At a high level, steps (a) through (d) construct a pair 
$\left( \tilde{ \beta }, \left\{ \tilde{a}^{(\ell)} \right\} \right)$ 
that satisfies the optimality condition \eqref{theory:opt_cond},
but the $ \left\{ \tilde{a}^{(\ell)} \right\}$ is not necessarily
guaranteed to be a member of 
$\partial \left( P ( \tilde{ \beta } ) \right)$. 
Step (e) does more than verifying the necessary conditions for it to belong to 
$\partial \left( P ( \tilde{ \beta })\right)$. The strict dual feasibility condition, once verified, 
ensures the uniqueness of the solution.
Note that by construction in Step (b), 
$\left\{ \tilde{a}^{(\ell)} \right\}$ satisfies dual feasibility conditions for 
$\ell = J+1,...,r-1$ since 
$\left\{ \tilde{b}^{(\ell)} \right\}$ does,
so it remains to verify for $\ell = 1,...,J$ (see Step (c)).

For each $\ell = 1,...,J$, by the construction in Step (d), 
$\left( \tilde{a}^{(\ell)}\right)_{g_{\ell}^c} = 0$. 
Note that $\tilde{\beta}_{g_{J}} = 0$ implies $\tilde{\beta}_{g_{\ell}} = 0$.
Thus, for $\tilde{a}^{(\ell)}$ to satisfy conditions in Lemma \ref{lem:theory:opt_cond},
it suffices to show \eqref{SDF}.

If the primal-dual witness procedure succeeds, 
then by construction, the solution $\tilde{\beta}$,
whose support is contained in the support of the true $L_{r\cdot}$, 
is a solution to \eqref{est:subproblem}. 
Moreover, by strict dual feasibility and Lemma \ref{lem:theory:uniqueness}, we know 
that $\tilde{\beta}$ is the unique solution $\hat{\beta}$ to the 
unconstrained problem \eqref{est:subproblem}. 
Therefore, the support of $\hat{\beta}$ is contained in the 
support of $L_{r\cdot}$. 

In the following we adapt the same proof technique as \cite{wainwright2009sharp}
to show that the primal-dual witness succeeds with high probability, 
from which we first conclude that $K(\hat{\beta}) \leq K$.

\subsection{Proof of Property 1 in Theorem \ref{thm:row}}
\begin{proof}
  We need to verify the strict dual feasibility \eqref{SDF}.
  By \eqref{eq:optimality_pd},
  \begin{align}
    & - \frac{2}{ \tilde{\beta}_r } + \frac{2}{n} \mathbf{X}_r^T \mathbf{X}_r
    \tilde{\beta}_r + \frac{2}{n} \mathbf{X}_r^T \mathbf{X}_{\mathcal{I}}
    \tilde{\beta}_{\mathcal{I}} = 0,
    \label{eq:i} \\
    & \frac{2}{n} \mathbf{X}_{\mathcal{I}}^T \mathbf{X}_r \tilde{\beta}_r + 
    \frac{2}{n} \mathbf{X}_{\mathcal{I}}^T \mathbf{X}_{\mathcal{I}}
    \tilde{\beta}_{\mathcal{I}} + \lambda \left( \sum_{\ell=1}^{r-1} W^{(\ell)} \ast
    \tilde{a}^{(\ell)}\right)_{\mathcal{I}} = 0.
    \label{eq:ii}
  \end{align}

  From \eqref{eq:ii},
  \begin{align}
    \tilde{\beta}_{\mathcal{I}} = - \left( \mathbf{X}_{\mathcal{I}}^T 
    \mathbf{X}_{\mathcal{I}}\right)^{-1} \left[ \mathbf{X}_{\mathcal{I}}^T
      \mathbf{X}_r \tilde{\beta}_r + \frac{\lambda n}{2}
      \left( \sum_{\ell=1}^{r-1} W^{(\ell)} \ast \tilde{a}^{(\ell)}\right)
    _{\mathcal{I}}\right].
    \label{eq:iii}
  \end{align}

  Plugging \eqref{eq:iii} back into \eqref{eq:i} and denoting
  $\mathbf{C}_{\mathcal{I}} = \mathbf{X}_{\mathcal{I}}
  \left( \mathbf{X}_{\mathcal{I}}^T \mathbf{X}_{\mathcal{I}}\right)^{-1} 
  \left( \sum_{\ell=1}^{r-1} W^{(\ell)} \ast 
  \tilde{a}^{(\ell)}\right)_{\mathcal{I}}
  $
  and $\mathbf{O}_{\mathcal{I}} = 
  \mathbf{I} -  \mathbf{X}_{\mathcal{I}} \left( \mathbf{X}_{\mathcal{I}}^T 
  \mathbf{X}_{\mathcal{I}}\right)^{-1}  \mathbf{X}_{\mathcal{I}}^T$
  as the orthogonal projection matrix onto the orthogonal complement of the column space of 
  $\mathbf{X}_{\mathcal{I}}$, 
  we have
  \[
    - \frac{2}{\tilde{\beta}_r} + \frac{2}{n} \mathbf{X}_r^T 
    \mathbf{O}_{\mathcal{I}}
    \mathbf{X}_r \tilde{\beta}_r - 
    \lambda \mathbf{X}_r^T \mathbf{C}_{\mathcal{I}} = 0,
  \]
  which implies that
  \begin{align}
    \tilde{\beta}_r = \frac{\frac{\lambda}{2} 
    \mathbf{X}^T_r \mathbf{C}_{\mathcal{I}}+ \sqrt{
      \frac{\lambda^2}{4} \left( \mathbf{X}^T_r \mathbf{C}_{\mathcal{I}} \right)^2 + 
      \frac{4}{n} \mathbf{X}_r^T\mathbf{O}_{\mathcal{I}} \mathbf{X}_r}}
      {\frac{2}{n} \mathbf{X}_{r}^T \mathbf{O}_{\mathcal{I}} \mathbf{X}_r}
      \label{eq:betar}
    \end{align}
    and that
    \begin{align}
      \left( \tilde{a}^{(\ell)} \right)_\ell & =  -\frac{2}{n \lambda} 
      \mathbf{X}^T_{\ell} \mathbf{X}_{\mathcal{S}} \tilde{\beta}_{\mathcal{S}}
      =  -\frac{2}{n \lambda} 
      \mathbf{X}^T_{\ell} \mathbf{X}_r \tilde{\beta}_{r}
      -\frac{2}{n \lambda } 
      \mathbf{X}^T_{\ell} \mathbf{X}_{\mathcal{I}} \tilde{\beta}_{\mathcal{I}}
      \nonumber\\
      & = -\frac{2}{n \lambda } 
      \mathbf{X}^T_{\ell} \mathbf{X}_r \tilde{\beta}_{r}
      + \frac{2}{n \lambda } 
      \mathbf{X}^T_{\ell} \mathbf{X}_{\mathcal{I}} 
      \left( \mathbf{X}_{\mathcal{I}}^T 
      \mathbf{X}_{\mathcal{I}}\right)^{-1} \left[ \mathbf{X}_{\mathcal{I}}^T
        \mathbf{X}_r \tilde{\beta}_r + \frac{\lambda n}{2}
        \left( \sum_{\ell=1}^{r-1} W^{(\ell)} \ast \tilde{a}^{(\ell)} \right)
      _{\mathcal{I}}\right]
      \nonumber\\
      & = -\frac{2}{n \lambda } \mathbf{X}^T_{\ell}
      \left[ \mathbf{I} -  \mathbf{X}_{\mathcal{I}} 
        \left( \mathbf{X}_{\mathcal{I}}^T 
      \mathbf{X}_{\mathcal{I}}\right)^{-1}  \mathbf{X}_{\mathcal{I}}^T \right]
      \mathbf{X}_r \tilde{\beta}_r + 
      \mathbf{X}^T_{\ell} \mathbf{X}_{\mathcal{I}} \left( \mathbf{X}_{\mathcal{I}}^T 
      \mathbf{X}_{\mathcal{I}}\right)^{-1} 
      \left( \sum_{\ell=1}^{r-1} W^{(\ell)} \ast \tilde{a}^{(\ell)} \right)
      _{\mathcal{I}}
      \nonumber\\
      & = \mathbf{X}_{\ell}^T \left[ \mathbf{C}_{\mathcal{I}} - 
        \mathbf{O}_{\mathcal{I}}\left( \frac{2}{n\lambda} \mathbf{X}_r 
      \tilde{\beta}_r\right)\right].
      \label{eq:al}
    \end{align}

    Conditioning on $\mathbf{X}_{\mathcal{I}}$, we can decompose $\mathbf{X}_r$
    and $\mathbf{X}_{\ell}$ as
    \begin{align}
      & \mathbf{X}_r^T = \Sigma_{r \mathcal{I}} \left( 
      \Sigma_{\mathcal{I} \mathcal{I}} \right)^{-1} \mathbf{X}^T_{\mathcal{I}}
      + E^T_r,
      \label{eq:decomposition1}\\
      & \mathbf{X}_{\ell}^T = \Sigma_{\ell \mathcal{I}} \left( 
      \Sigma_{\mathcal{I} \mathcal{I}} \right)^{-1} \mathbf{X}^T_{\mathcal{I}}
      + E^T_{\ell},
      \nonumber
    \end{align}
    where $E_r \sim N \left( \mathbf{0}_n, \theta^{(r)}_r \mathbf{I}_{n\times n} \right)$ and 
    $E_{\ell} \sim N \left( \mathbf{0}_n, \theta^{(\ell)}_r\mathbf{I}_{n\times n} \right)$,
    and $\theta_r^{(\ell)}$ and $\theta_r^{(r)}$ are defined in Section 4.
    Then 
    \begin{align}
      \mathbf{X}^T_{\ell} \mathbf{O}_{\mathcal{I}} 
      = E_{\ell}^T \mathbf{O}_{\mathcal{I}} 
      \quad \text{  and  } \quad 
      \mathbf{O}_{\mathcal{I}} \mathbf{X}_r 
      = \mathbf{O}_{\mathcal{I}} E_r,
      \nonumber
    \end{align}
    and from \eqref{eq:al}
    \begin{align}
      \left( \tilde{a}^{(\ell)} \right)_\ell & = 
      E_{\ell}^T \left[ \mathbf{C}_{\mathcal{I}} - 
        \mathbf{O}_{\mathcal{I}}\left( \frac{2}{n\lambda} E_r 
      \tilde{\beta}_r\right)\right]
      + \Sigma_{\ell \mathcal{I}} \left( \Sigma_{\mathcal{I} \mathcal{I}} 
      \right)^{-1}
      \left( \sum_{\ell=1}^{r-1} W^{(\ell)}\ast \tilde{a}^{(\ell)} \right)
      _{\mathcal{I}} 
      \nonumber\\
      &:= R^{(\ell)} + F^{(\ell)}.
      \label{eq:decompofdual}
    \end{align}

    We first bound $\max_{\ell} \left|F^{(\ell)}\right|$.
    Note that 
    \begin{align}
      & \left \| \left( 
      \sum_{\ell=1}^{r-1} W^{(\ell)} \ast 
      \tilde{a}^{(\ell)}\right)_{\mathcal I} 
      \right \|_\infty = \left \| \left( 
      \sum_{\ell=J+1}^{r-1} W^{(\ell)} \ast 
      \tilde{a}^{(\ell)}\right)_{\mathcal I} 
      \right \|_\infty 
      = \max_{m\in \mathcal{I}} \left| \sum_{\ell=m}^{r-1}
      w_{\ell m} \left( \tilde{a}^{(\ell)} \right)_m\right| 
      \nonumber\\	
      \leq & \max_{m \in \mathcal{I}} \sum_{\ell = m}^{r-1} w_{\ell m} 
      \left|\left( \tilde{a}^{(\ell)} \right)_m\right|
      \leq \max_{m \in \mathcal{I}} \sum_{\ell = m}^{r-1} 
      \frac{1}{\left( \ell-m+1 \right)^2} \leq \sum_{k = 1}^\infty \frac{1}{k^2}= \frac{\pi^2}{6},
      \label{eq:boundingdualvariable}
    \end{align}
    where we used $\norm{\tilde{a}^{(\ell)}}_\infty \leq \norm{\tilde{a}^{(\ell)}}_2 \leq 1$.
    Therefore, by Assumption \ref{airrepre}, 
    \[
      \max_{1 \leq \ell \leq J} \left| 
      \Sigma_{\ell \mathcal{I}} \left( \Sigma_{\mathcal{I} \mathcal{I}} 
      \right)^{-1}\left( \sum_{\ell=1}^{r-1} 
      W^{(\ell)}\ast \tilde{a}^{(\ell)} \right)_{\mathcal{I}} \right| 
      \leq  1-\alpha.
    \]

    To give a bound on the random quantity $\left|R^{(\ell)}\right|$, we first state a general result that will be used
    multiple times later in the proof.
    \begin{lemma}
      Consider the term $E_j^T \eta$ where $\eta \in \real^n$ is a random vector 
      depending on $\mathbf{X}_{\mathcal I}$ and $\mathbf{X}_{r}$ and 
      $E_{j} \sim N \left( \mathbf{0}_n, 
      \theta_r^{(j)}\mathbf{I}_{n\times n} \right)$ for $j=1,\dots,J,r$. 
      If for some $\bar{Q}\geq 0$
      \[
        \Prob\left[ \Var\left( E_j^T\eta \Big| \mathbf{X}_{\mathcal{I}}, \mathbf{X}_r \right)
        \geq \bar{Q} \right] \leq \bar{p} 
      \]
      then for any $a > 0$, 
      \[
        \Prob\left[ \left| E_j^T\eta\right| \geq a \right] \leq 
        2 \exp\left( -\frac{a^2}{2 \bar{Q}} \right) + \bar{p} 
      \]
      \label{lem:probargu}
    \end{lemma}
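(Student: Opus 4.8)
The plan is to prove the bound by a conditioning argument that reduces it to a one-dimensional Gaussian tail inequality. Set $Q := \Var\left( E_j^T \eta \mid \mathbf{X}_{\mathcal{I}}, \mathbf{X}_r \right)$, a nonnegative random variable measurable with respect to $(\mathbf{X}_{\mathcal{I}}, \mathbf{X}_r)$, and let $\mathcal{A} = \left\{ Q \leq \bar{Q} \right\}$ denote the ``good'' event, on which the hypothesis gives $\Prob(\mathcal{A}^c) \leq \bar{p}$. Splitting on $\mathcal{A}$,
\begin{align}
\Prob\left[ \left| E_j^T \eta \right| \geq a \right] \leq \Prob\left[ \left| E_j^T \eta \right| \geq a, \mathcal{A} \right] + \Prob(\mathcal{A}^c) \leq \Prob\left[ \left| E_j^T \eta \right| \geq a, \mathcal{A} \right] + \bar{p},
\nonumber
\end{align}
so it remains to control the first term by $2\exp\left( -a^2 / (2\bar{Q}) \right)$.

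The crux is the distributional claim that, conditionally on $(\mathbf{X}_{\mathcal{I}}, \mathbf{X}_r)$, the scalar $E_j^T\eta$ is centered Gaussian with variance $Q$. Because $\eta$ is measurable with respect to $(\mathbf{X}_{\mathcal{I}}, \mathbf{X}_r)$, it is frozen under this conditioning, so $E_j^T\eta$ is a fixed linear functional of $E_j$. By the decomposition \eqref{eq:decomposition1}, the residual $E_j$ is independent of $\mathbf{X}_{\mathcal{I}}$ by construction; and for the noise indices $j \in \{1,\dots,J\}$ the ordered/local-dependence structure makes $\cov(X_j, X_r \mid X_{\mathcal{I}}) = 0$, since $X_r$ regresses (in the sense of \eqref{eq:regression}) only on $X_{\mathcal{I}}$ among its predecessors, so that $E_j$ is uncorrelated with $E_r$ and hence independent of $\mathbf{X}_r$. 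Therefore $E_j$ retains its marginal law $N(\mathbf{0}_n, \theta_r^{(j)}\mathbf{I}_n)$ given $(\mathbf{X}_{\mathcal{I}}, \mathbf{X}_r)$, so $E_j^T\eta \mid (\mathbf{X}_{\mathcal{I}}, \mathbf{X}_r) \sim N\left( 0, \theta_r^{(j)}\norm{\eta}_2^2 \right)$ and indeed $Q = \theta_r^{(j)}\norm{\eta}_2^2$.

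With conditional centered Gaussianity in hand, I would invoke the standard estimate $\Prob\left[ |Z| \geq a \right] \leq 2\exp\left( -a^2 / (2\sigma^2) \right)$ for $Z \sim N(0, \sigma^2)$, applied conditionally, together with the tower property. Since $\mathcal{A}$ is $(\mathbf{X}_{\mathcal{I}}, \mathbf{X}_r)$-measurable and $Q \leq \bar{Q}$ on $\mathcal{A}$,
\begin{align}
\Prob\left[ \left| E_j^T \eta \right| \geq a, \mathcal{A} \right] = \E\left[ \mathbb{1}_{\mathcal{A}}\, \Prob\left( \left| E_j^T \eta \right| \geq a \mid \mathbf{X}_{\mathcal{I}}, \mathbf{X}_r \right) \right] \leq \E\left[ \mathbb{1}_{\mathcal{A}}\, 2\exp\left( -\frac{a^2}{2Q} \right) \right] \leq 2\exp\left( -\frac{a^2}{2\bar{Q}} \right),
\nonumber
\end{align}
and combining with the split above yields the stated inequality.

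The Gaussian tail estimate and the law of total probability are entirely routine; the only substantive point is the conditional-Gaussianity step, and the main obstacle is verifying that the residual $E_j$ is independent of the conditioning variables so that the conditional mean of $E_j^T\eta$ vanishes and $Q$ becomes the sole quantity needing control. For $j \le J$ this is exactly where the ordered structure enters, through the vanishing conditional covariance $\cov(X_j, X_r \mid X_{\mathcal{I}}) = 0$; without it, a nonzero conditional mean would spoil the clean sub-Gaussian form.
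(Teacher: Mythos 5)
Your proof is correct and follows essentially the same route as the paper's: split on the event that the conditional variance exceeds $\bar{Q}$, apply the Gaussian tail bound conditionally on $(\mathbf{X}_{\mathcal{I}}, \mathbf{X}_r)$, and combine via the tower property and a union bound. You additionally justify why $E_j^T\eta$ is conditionally centered Gaussian (independence of $E_j$ from the conditioning variables for $j \le J$), a point the paper's proof takes for granted.
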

    \begin{proof}
      Define the event 
      \[
        \bar{\mathcal{B}} = \left\{ \Var\left( E_j^T \eta \Big|
          \mathbf{X}_{\mathcal{I}}  \right) \geq \bar{Q} \right\}.
        \]

        Now for any $a$ and conditioned on $\mathbf{X}_{\mathcal{I}}$ and $\mathbf{X}_r$,
        \[
          \Prob \left[ E_j^T \eta \geq a \right] \leq 
          \Prob \left[ E_j^T \eta \geq a
          \Big| \bar{ \mathcal{B} }^c \right]
          + \Prob \left[ \bar{ \mathcal{B} } \right]
          \leq \Prob \left[ E_j^T \eta \geq a
          \Big| \bar{\mathcal{B}}^c \right]
          + \bar{p}.
        \]
        Conditioned on $\bar{\mathcal B}^c$, the variance of $E_j^T \eta$ is at most 
        $\bar{Q}$. 
        So by standard Gaussian tail bounds, we have
        \[
          \Prob \left[ E_j^T \eta \geq a \Big| 
          \bar{\mathcal B}^c\right] 
          = \E \left[ \Prob\left( E_j^T \eta \geq a \Big| 
          \mathbf{X}_{\mathcal{I}}, \mathbf{X}_r\right) \Big| \bar{\mathcal{B}}^c \right]
          \leq \E \left[ 2 \exp
            \left( -\frac{a^2}{2 \bar{Q}} \right)
          \Big| \bar{\mathcal{B}}^c \right] \leq 2 \exp
          \left( -\frac{a^2}{2 \bar{Q}} \right).
          \nonumber
        \]
      \end{proof}

      Then note that 
      $\Var\left( E_{i\ell} \right) = \theta_r^{(\ell)} \leq \theta_r$
      for $i=1,\dots,n$.
      Now conditioned on both $\mathbf{X}_{\mathcal{I}}$ and $\mathbf{X}_r$, 
      $R^{(\ell)}$ is zero-mean with variance at most 
      \begin{align}
        \Var&\left( R^{(\ell)} \Big| \mathbf{X}_{\mathcal{I}} \right) \nonumber\\ 
        \leq &\theta_r \norm{\mathbf{C}_{\mathcal{I}} - 
        \mathbf{O}_{\mathcal{I}}\left( \frac{2}{n\lambda} E_r 
        \tilde{\beta}_r\right)}^2_2 
        = \theta_r \left\{ \mathbf{C}_{\mathcal{I}}^T \mathbf{C}_{\mathcal{I}} +
        \norm{\mathbf{O}_{\mathcal{I}}\left( \frac{2}{n\lambda} 
        E_r\tilde{\beta}_r \right)}^2_2 \right\}
        \nonumber\\
        =& \theta_r \left\{ \frac{1}{n} 
        \left( \sum_{\ell=1}^{r-1} W^{(\ell)}\ast \tilde{a}^{(\ell)} 
        \right)^T_{\mathcal{I}} \left( \frac{1}{n} \mathbf{X}_{\mathcal{I}}^T
        \mathbf{X}_{\mathcal{I}}\right)^{-1} 
        \left(\sum_{\ell=1}^{r-1} W^{(\ell)}\ast \tilde{a}^{(\ell)} 
        \right)_{\mathcal{I}} + \frac{4\tilde{\beta}^2_r
        \norm{\mathbf{O}_{\mathcal{I}} E_r}^2_2}{n^2 \lambda^2} \right\}
        \nonumber\\
        :=& \theta_r M_n,
        \nonumber
      \end{align}
      where the first equality holds from Pythagorean identity.
      The next lemma bounds the random scaling $M_n$.

      \begin{lemma}
        For $\varepsilon \in \left( 0,\frac{1}{2} \right)$, denote
        \[
          \bar{M}_n\left( \varepsilon \right) := 
          \frac{3 \kappa^2 \pi^2 }{2} \frac{K}{n} +
          \frac{1}{\theta_r^{(r)} \left( n-K \right)\left( 1-\varepsilon \right)}
          +\frac{16}{n\lambda^2},
        \]
        then 
        \[
          \Prob\left[ M_n \geq \bar{M}_n\left( \varepsilon \right) \Big| \mathbf{X}_{\mathcal{I}}\right] \leq 7 
          \exp\left( -n \min\left\{ \frac{\alpha^2}{3 \theta_r^{(r)} \kappa^2 \pi^2 K} 
          ,\frac{\varepsilon^2}{4}\left( 1-\frac{K}{n}\right) \right\}\right).
        \]
        \label{lem:boundingvarA}
      \end{lemma}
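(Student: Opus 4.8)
The plan is to split $M_n$ into its two summands and match them to the three terms of $\bar{M}_n(\varepsilon)$. Write $M_n = M_n^{(1)} + M_n^{(2)}$, where $M_n^{(1)} = \frac1n\big(\sum_{\ell}W^{(\ell)}\ast\tilde a^{(\ell)}\big)_{\I}^T\big(\frac1n\mathbf{X}_{\I}^T\mathbf{X}_{\I}\big)^{-1}\big(\sum_{\ell}W^{(\ell)}\ast\tilde a^{(\ell)}\big)_{\I} = \norm{\mathbf{C}_{\I}}_2^2$ and $M_n^{(2)} = 4\tilde\beta_r^2\norm{\mathbf{O}_{\I}E_r}_2^2/(n^2\lambda^2)$. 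For the first summand I would use the uniform dual bound $\norm{(\sum_{\ell}W^{(\ell)}\ast\tilde a^{(\ell)})_{\I}}_\infty \le \pi^2/6$ from \eqref{eq:boundingdualvariable} (which holds for any feasible dual, regardless of $E_r$) together with $|\I| = K$ to get $\norm{(\cdot)_{\I}}_2^2 \le K\pi^4/36$, and then bound $\vertiii{(\frac1n\mathbf{X}_{\I}^T\mathbf{X}_{\I})^{-1}}_2$ from above. Since we condition on $\mathbf{X}_{\I}$ this operator norm is fixed, and on the high-probability event that the sample Gram matrix inherits the population eigenvalue floor $\kappa^{-2}$ of $\Sigma_{\I\I}$ (Assumption \ref{abddsval} plus eigenvalue interlacing, with the sample-to-population deviation folded into the $\exp(-c_3 n)$ events of Theorem \ref{thm:row}), this gives $M_n^{(1)} \le \frac{3\kappa^2\pi^2}{2}\frac{K}{n}$, the first term of $\bar{M}_n(\varepsilon)$.

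For the second summand I would start from the closed form \eqref{eq:betar} for $\tilde\beta_r$. Writing $W = \norm{\mathbf{O}_{\I}E_r}_2^2 = \mathbf{X}_r^T\mathbf{O}_{\I}\mathbf{X}_r$ and $c = \mathbf{X}_r^T\mathbf{C}_{\I}$ and substituting gives $M_n^{(2)} = \big(\frac\lambda2 c + \sqrt{\frac{\lambda^2}{4}c^2 + \frac4n W}\big)^2/(\lambda^2 W)$. Because $\frac\lambda2 c \le |\frac\lambda2 c| \le \sqrt{\frac{\lambda^2}{4}c^2 + \frac4n W}$, the elementary inequality $(a+\sqrt b)^2 \le 4b$ (valid whenever $a \le \sqrt b$) yields $M_n^{(2)} \le c^2/W + 16/(n\lambda^2)$, which already isolates the last term of $\bar{M}_n(\varepsilon)$. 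It then remains to show $c^2 \le 1$ and $W \ge \theta_r^{(r)}(n-K)(1-\varepsilon)$, so that $c^2/W$ is bounded by the middle term.

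The lower bound on $W$ is the easy half: conditional on $\mathbf{X}_{\I}$, the noise $E_r$ is independent of $\mathbf{X}_{\I}$ and $\mathbf{O}_{\I}$ has rank $n-K$, so $W/\theta_r^{(r)} \sim \chi^2_{n-K}$, and the standard lower chi-squared tail gives $W \ge \theta_r^{(r)}(n-K)(1-\varepsilon)$ with failure probability $\exp(-\tfrac14(n-K)\varepsilon^2) = \exp\!\big(-n\tfrac{\varepsilon^2}{4}(1-\tfrac Kn)\big)$, the second competing rate. For $c^2 \le 1$ I would decompose $c = \Sigma_{r\I}\Sigma_{\I\I}^{-1}v + E_r^T\mathbf{C}_{\I}$ with $v = (\sum_{\ell}W^{(\ell)}\ast\tilde a^{(\ell)})_{\I}$, using $\mathbf{X}_r = \mathbf{X}_{\I}\Sigma_{\I\I}^{-1}\Sigma_{\I r} + E_r$ and $\mathbf{X}_{\I}^T\mathbf{C}_{\I} = v$. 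The deterministic part is at most $\norm{\Sigma_{r\I}\Sigma_{\I\I}^{-1}}_1\norm{v}_\infty \le \frac{6}{\pi^2}(1-\alpha)\cdot\frac{\pi^2}{6} = 1-\alpha$ by Assumption \ref{airrepre} (applicable since $r \in \I^c$) and \eqref{eq:boundingdualvariable}. The random part $E_r^T\mathbf{C}_{\I}$ is a mean-zero Gaussian-type form with conditional variance proxy $\theta_r^{(r)}\norm{\mathbf{C}_{\I}}_2^2 = \theta_r^{(r)}M_n^{(1)} \le \theta_r^{(r)}\frac{3\kappa^2\pi^2}{2}\frac{K}{n}$; a Gaussian tail then gives $|E_r^T\mathbf{C}_{\I}| \le \alpha$ with failure probability $2\exp\!\big(-\frac{\alpha^2}{2\theta_r^{(r)}\cdot\frac{3\kappa^2\pi^2}{2}K/n}\big) = 2\exp\!\big(-n\frac{\alpha^2}{3\theta_r^{(r)}\kappa^2\pi^2 K}\big)$, the first competing rate. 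Hence $|c| \le (1-\alpha)+\alpha = 1$, and a union bound over the $O(1)$ exceptional events (seven in the stated constant), taking the smaller of the two exponential rates, completes the argument.

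The main obstacle is the last step. Although $\norm{\mathbf{C}_{\I}}_2^2$ is uniformly bounded by $\frac{3\kappa^2\pi^2}{2}\frac{K}{n}$ for every realization of $E_r$ (again because $\norm{v}_\infty \le \pi^2/6$ holds for any feasible dual), the vector $\mathbf{C}_{\I}$ itself depends on $E_r$ through the restricted solution that defines the dual aggregate $v$, so $E_r^T\mathbf{C}_{\I}$ is not literally a Gaussian linear form with a frozen coefficient vector. The difficulty is that the desired $K$-linear rate $\alpha^2/(\theta_r^{(r)}\kappa^2\pi^2 K)$ forces one to exploit the sharp Gaussian tail: a crude Cauchy--Schwarz split $|E_r^T\mathbf{C}_{\I}| \le \norm{(\mathbf{I}-\mathbf{O}_{\I})E_r}_2\norm{\mathbf{C}_{\I}}_2$ would only deliver a weaker $K^2$-type sample-size requirement. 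Resolving this coupling sharply---using that $\mathbf{C}_{\I}$ lives in the fixed $K$-dimensional column space of $\mathbf{X}_{\I}$ and has uniformly controlled norm, so the relevant randomness is the projection of $E_r$ paired against a uniformly bounded, low-dimensional target, and invoking Lemma \ref{lem:probargu} with variance proxy $\bar Q = \theta_r^{(r)}\frac{3\kappa^2\pi^2}{2}K/n$---is the delicate technical core of the proof.
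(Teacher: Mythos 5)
Your decomposition and every key step --- the operator-norm bound on $\left(\frac1n\mathbf{X}_{\mathcal I}^T\mathbf{X}_{\mathcal I}\right)^{-1}$ paired with $\|(\sum_\ell W^{(\ell)}\ast\tilde a^{(\ell)})_{\mathcal I}\|_\infty\le\pi^2/6$ for the first summand, the elementary inequality $(a+\sqrt{a^2+y})^2\le 4(a^2+y)$ applied to the closed form \eqref{eq:betar} to isolate $\frac{(\mathbf{X}_r^T\mathbf{C}_{\mathcal I})^2}{\|\mathbf{O}_{\mathcal I}E_r\|_2^2}+\frac{16}{n\lambda^2}$, the $\chi^2$ lower tail for $\|\mathbf{O}_{\mathcal I}E_r\|_2^2$, and the split of $\mathbf{X}_r^T\mathbf{C}_{\mathcal I}$ into a deterministic part controlled by Assumption \ref{airrepre} and a random part controlled by a Gaussian tail --- coincide with the paper's proof, which routes the last step through Lemma \ref{lem:boundingXrCI} and closes with the same seven-term union bound. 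The coupling between $\mathbf{C}_{\mathcal I}$ and $E_r$ that you flag is handled in the paper exactly as you propose: Lemma \ref{lem:probargu} is invoked with the uniform variance proxy $\bar Q=\theta_r^{(r)}\cdot\frac{3\kappa^2\pi^2}{2}\frac{K}{n}$, which holds for every feasible dual aggregate.
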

      \begin{proof}
        See Appendix \ref{proof:boundingvarA}.
      \end{proof}

      Now by Lemma \ref{lem:probargu} and the union bound,
      \begin{align}
        \Prob\left[ \max_{1\leq\ell \leq J} \left|R^{(\ell)}\right| 
        \geq \alpha \right]
        \leq 2 J \exp\left( -\frac{\alpha^2}{ 2 \theta_r \bar{M}_n
      \left( \varepsilon \right)} \right) + 
      7 \exp\left( -c_3 n\right),
      \label{eq:condprob}
    \end{align}
    for some constant $c_3$ independent of $n$ and $J$.
    By the assumption that $\frac{K}{n} = o(1)$, we have that $\frac{K}{n} \leq 1-\varepsilon$ for $n$ large enough, thus 
    \[
      \bar{M}_n\left( \varepsilon \right) \leq 
      \frac{K}{n} \left( \frac{3 \kappa^2 \pi^2 }{2} +
      \frac{1}{K \theta_r^{(r)} \left( 1-\varepsilon \right)^2}
      +\frac{16}{K\lambda^2} \right)
      \leq  \frac{K}{n} \left( \frac{3 \kappa^2 \pi^2 }{2} +
      \frac{4}{K \theta_r^{(r)} }
      +\frac{16}{K\lambda^2} \right).
    \]
    For the exponential term in \eqref{eq:condprob}
    to have faster decaying rate than the $J$ term, we need
    \[
      \frac{n}{K \log J} > \frac{\theta_r}{\alpha^2}
      \left( 3\kappa^2 \pi^2 + \frac{8}{K \theta_r^{(r)}} 
      + \frac{32}{K \lambda^2} \right).
    \]
  \end{proof}

  \subsection{Proof of Property 2 in Theorem \ref{thm:row}}
  Next we study the $\ell_{\infty}$ error bound. 
  The following theorem gives an $\ell_\infty$ error bound of $\tilde{\beta}$.
  \begin{proof}
    \label{proof:thmerrorbound}
    Let $\delta = \tilde{\beta} - \beta^\ast = \tilde{\beta} - 
    \left( {L}^T \right)_{1:r,r}$ and 
    $\mathcal{W} = S {L}^T - \left( L \right)^{-1}$,
    then from \eqref{eq:ii} and the fact that $L^{-1}$ is 
    lower-triangular,
    \begin{align}
      \delta_{\mathcal{I}}  =& - \left( \mathbf{X}_{\mathcal{I}}^T 
      \mathbf{X}_{\mathcal{I}} \right)^{-1} \left[ \mathbf{X}_{\mathcal{I}}^T
        \mathbf{X}_r \tilde{\beta}_r 
        + \left( \mathbf{X}_{\mathcal{I}}^T 
        \mathbf{X}_{\mathcal{I}} \right) 
      \left( L \right)^T_{\mathcal{I},r} \right]
      - \frac{n \lambda}{2} \left( \mathbf{X}_{\mathcal{I}}^T 
      \mathbf{X}_{\mathcal{I}} \right)^{-1} 
      \left( \sum_{\ell=1}^{r-1} W^{(\ell)}\ast \tilde{a}^{(\ell)} \right)_
      {\mathcal{I}} 
      \nonumber\\
      =& - \left(\frac{1}{n} \mathbf{X}_{\mathcal{I}}^T 
      \mathbf{X}_{\mathcal{I}} \right)^{-1} \left[ \frac{1}{n}
        \mathbf{X}_{\mathcal{I}}^T
        \mathbf{X}_r \left( \delta_r + \beta^\ast_r \right)
        + \left(\frac{1}{n} \mathbf{X}_{\mathcal{I}}^T 
        \mathbf{X}_{\mathcal{I}} \right) 
      \left( L \right)^T_{\mathcal{I},r} \right]
      \nonumber\\
      -& \frac{\lambda}{2} \left( \frac{1}{n} \mathbf{X}_{\mathcal{I}}^T 
      \mathbf{X}_{\mathcal{I}} \right)^{-1} 
      \left( \sum_{\ell=1}^{r-1} W^{(\ell)}\ast \tilde{a}^{(\ell)} \right)_
      {\mathcal{I}} 
      \nonumber \\
      =& - \left(\mathbf{X}_{\mathcal{I}}^T 
      \mathbf{X}_{\mathcal{I}} \right)^{-1} 
      \mathbf{X}_{\mathcal{I}}^T \mathbf{X}_r \delta_r
      - \left(\frac{1}{n} \mathbf{X}_{\mathcal{I}}^T 
      \mathbf{X}_{\mathcal{I}} \right)^{-1} 
      \left( S {L}^T \right)_{\mathcal{I},r}
      \nonumber\\
      -& \frac{\lambda}{2} \left( \frac{1}{n} \mathbf{X}_{\mathcal{I}}^T 
      \mathbf{X}_{\mathcal{I}} \right)^{-1} 
      \left( \sum_{\ell=1}^{r-1} W^{(\ell)}\ast \tilde{a}^{(\ell)} \right)_
      {\mathcal{I}} 
      \nonumber\\
      =& - \left(\mathbf{X}_{\mathcal{I}}^T 
      \mathbf{X}_{\mathcal{I}} \right)^{-1} 
      \mathbf{X}_{\mathcal{I}}^T \mathbf{X}_r \delta_r
      - \left(\frac{1}{n} \mathbf{X}_{\mathcal{I}}^T 
      \mathbf{X}_{\mathcal{I}} \right)^{-1} 
      \mathcal{W}_{\mathcal{I},r}
      - \frac{\lambda}{2} \left( \frac{1}{n} \mathbf{X}_{\mathcal{I}}^T 
      \mathbf{X}_{\mathcal{I}} \right)^{-1} 
      \left( \sum_{\ell=1}^{r-1} W^{(\ell)}\ast \tilde{a}^{(\ell)} \right)_
      {\mathcal{I}}.
      \label{eq:deltaI}
    \end{align}

    From \eqref{eq:i} and the fact that $\left( L^{-1} \right)_{rr} = 
    \frac{1}{L_{rr}}$,
    \begin{align}
      &-\frac{1}{\tilde{\beta}_r} + \frac{1}{n} \mathbf{X}_r^T \mathbf{X}_r
      \delta_r + \frac{1}{n} \mathbf{X}_r^T\mathbf{X}_{\mathcal{I}} 
      \delta_{\mathcal{I}} + \frac{1}{n} \mathbf{X}_r^T\mathbf{X}_r \beta^\ast_r
      + \frac{1}{n} \mathbf{X}_r^T\mathbf{X}_{\mathcal{I}} \beta^\ast_{\mathcal{I}}
      \nonumber\\
      &=-\frac{1}{\tilde{\beta}_r} + \frac{1}{n} \mathbf{X}_r^T \mathbf{X}_r
      \delta_r + \frac{1}{n} \mathbf{X}_r^T\mathbf{X}_{\mathcal{I}} 
      \delta_{\mathcal{I}} + \left( S {L}^T \right)_{rr}
      \nonumber\\
      &=\left( L^{-1} \right)_{rr} - \frac{1}{\tilde{\beta}_r} + 
      \frac{1}{n} \mathbf{X}_r^T \mathbf{X}_r
      \delta_r + \frac{1}{n} \mathbf{X}_r^T\mathbf{X}_{\mathcal{I}} 
      \delta_{\mathcal{I}} + \mathcal{W}_{rr}
      \nonumber\\
      &=\frac{\delta_r}{L_{rr} \tilde{\beta}_r}+ 
      \frac{1}{n} \mathbf{X}_r^T \mathbf{X}_r
      \delta_r + \frac{1}{n} \mathbf{X}_r^T\mathbf{X}_{\mathcal{I}} 
      \delta_{\mathcal{I}} + \mathcal{W}_{rr} = 0.
      \label{eq:deltar} 
    \end{align}

    Plugging \eqref{eq:deltaI} into \eqref{eq:deltar}, we have
    \begin{align}
      \frac{\delta_r}{L_{rr} \tilde{\beta_r}} + \frac{1}{n} 
      \mathbf{X}_r^T \mathbf{O}_{\mathcal{I}} \mathbf{X}_r \delta_r = 
      \mathbf{X}_r^T\mathbf{X}_{\mathcal{I}}\left( \mathbf{X}_{\mathcal{I}}^T
      \mathbf{X}_{\mathcal{I}}\right)^{-1} \mathcal{W}_{\mathcal{I},r}
      +\frac{\lambda}{2} \mathbf{X}_r \mathbf{C}_{\mathcal{I}} - \mathcal{W}_{rr},
      \nonumber
    \end{align}
    which implies
    \begin{align}
      \delta_r = \left( \frac{1}{L_{rr}\tilde{\beta}_r} + 
      \frac{1}{n}\mathbf{X}_r^T\mathbf{O}_{\mathcal{I}}\mathbf{X}_r \right)^{-1}
      \left[  \mathbf{X}_r^T\mathbf{X}_{\mathcal{I}}\left( \mathbf{X}_{\mathcal{I}}^T
        \mathbf{X}_{\mathcal{I}}\right)^{-1} \mathcal{W}_{\mathcal{I},r}
        +\frac{\lambda}{2} \mathbf{X}_r \mathbf{C}_{\mathcal{I}} - 
      \mathcal{W}_{rr}\right].
      \nonumber
    \end{align}
    Since $L_{rr}> 0$ and $\tilde{\beta}_r >0$,
    \begin{align}
      |\delta_r| \leq& \left| \left( \frac{1}{L_{rr}\tilde{\beta}_r} + 
      \frac{1}{n}
      {\mathbf{X}_r^T\mathbf{O}_{\mathcal{I}}\mathbf{X}_r} \right)^{-1}\right|
      \left( \left| \mathbf{X}_r^T\mathbf{X}_{\mathcal{I}}
      \left( \mathbf{X}_{\mathcal{I}}^T\mathbf{X}_{\mathcal{I}}\right)^{-1} 
      \mathcal{W}_{\mathcal{I},r} \right|
      +\left| \frac{\lambda}{2} \mathbf{X}_r \mathbf{C}_{\mathcal{I}}\right| + 
      \left| \mathcal{W}_{rr}\right| \right)
      \nonumber\\
      \leq &
      \left| \left(\frac{1}{n}
      {\mathbf{X}_r^T\mathbf{O}_{\mathcal{I}}\mathbf{X}_r} \right)^{-1}\right|
      \left( \left| \mathbf{X}_r^T\mathbf{X}_{\mathcal{I}}
      \left( \mathbf{X}_{\mathcal{I}}^T\mathbf{X}_{\mathcal{I}}\right)^{-1} 
      \mathcal{W}_{\mathcal{I},r} \right|
      +\left| \frac{\lambda}{2} \mathbf{X}_r \mathbf{C}_{\mathcal{I}}\right| + 
      \left| \mathcal{W}_{rr}\right| \right).
      \nonumber
    \end{align}

    Now conditioned on $\mathbf{X}_{\mathcal{I}}$, by the decomposition 
    \eqref{eq:decomposition1}, $\left( \frac{1}{n} \mathbf{X}_r^T 
    \mathbf{O}_{\mathcal{I}} \mathbf{X}_r  \right)^{-1}= 
    \left( \frac{1}{n} E_r^T \mathbf{O}_{\mathcal{I}} E_r  \right)^{-1}= 
    \frac{n}{\norm{\mathbf{O}_{\mathcal{I}}E_r}_2^2}$.
    From Lemma \ref{lem:chisqconcentration}, it follows that
    \begin{align}
      \Prob\left[  \left( \frac{1}{n} \mathbf{X}_r^T 
        \mathbf{O}_{\mathcal{I}} \mathbf{X}_r  \right)^{-1} \geq  
      \frac{1}{\theta_r^{(r)}} \frac{n}{n-K} \frac{1}{1-\varepsilon} \right]
      \leq \exp \left( -\frac{1}{4}\left( n-K \right)\varepsilon^2 \right).
      \nonumber
    \end{align}

    Also, by Lemma \ref{lem:boundingXrCI},
    \begin{align}
      \Prob \left[ 
        \left| \mathbf{X}_r^T \mathbf{C}_{\mathcal{I}}\right| \geq 
      1 \right] \leq  
      2\exp\left( -\frac{n\alpha^2}{3 \theta_r^{(r)} \kappa^2 \pi^2 K} \right)
      + 2 \exp \left( -\frac{n}{2} \right).
      \nonumber
    \end{align}

    To deal with the rest of terms in \eqref{eq:deltar} that involve $\mathcal{W}$, 
    we introduce the following concentration inequality to control its
    element-wise infinity norm.
    \begin{lemma} \label{lem:concentration}
      Let $\mathcal{W} = S {L}^T - L^{-1}$.
      Under Assumptions \ref{abddsval} and \ref{asubgaussian},
      there exist constants $C_1, C_2,C_3>0$ such that
      for any $0 < t \leq 2 \kappa$,
      \[
        \Prob \left[ \norm{\mathcal W}_\infty > t \right]
        \leq 2p^2 \exp \left( -\frac{C_3nt^2}{\kappa^2} \right)
        + 4p \exp \left( - \frac{C_1nt}{\kappa^2} \right) 
        + 4p \exp \left( - C_2nt \right).
      \]
    \end{lemma}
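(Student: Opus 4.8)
The plan is to recognize $\mathcal{W}$ as a centered random matrix, bound each of its $p^2$ entries by a sub-exponential (Bernstein-type) inequality, and finish with a union bound. The first step is to identify the mean. Since $\Sigma = \Omega^{-1} = (L^TL)^{-1} = L^{-1}L^{-T}$, we have $\Sigma L^T = L^{-1}$, and because $\E[S]=\Sigma$ this yields $\E[\mathcal{W}] = \Sigma L^T - L^{-1} = 0$. Consequently $\mathcal{W} = (S-\Sigma)L^T$, and writing $y_{ik} := (Lx_i)_k = \langle L_{k\cdot}, x_i\rangle$,
\[
  \mathcal{W}_{jk} = \frac{1}{n}\sum_{i=1}^n\left( x_{ij}\,y_{ik} - \E[x_{ij}y_{ik}]\right).
\]

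The next step is to record the two scales governing this average. The second factor has variance exactly $1$, since $\Var(Lx_i) = L\Sigma L^T = L\,L^{-1}L^{-T}\,L^T = I$, while the first factor satisfies $\Var(x_{ij}) = \Sigma_{jj} \le \sigma_{\max}(\Sigma) \le \kappa^2$ by \eqref{theory:reseigenvalue}. Thus each $\mathcal{W}_{jk}$ is an average of centered products of two sub-Gaussian variables whose parameters are controlled by $\kappa$ (for $x_{ij}$) and by $1$ (for $y_{ik}$). A product of sub-Gaussian variables is sub-exponential, so a Bernstein-type inequality applies per entry. The variance proxy of a single product is of order $\kappa^2$: for jointly Gaussian data one computes $\Var(x_{ij}y_{ik}) = \Sigma_{jj} + \cov(x_{ij},y_{ik})^2 \le 2\kappa^2$, using $\cov(x_{ij},y_{ik}) = (\Sigma L^T)_{jk} = (L^{-1})_{jk}$ and $\sigma_{\max}(L^{-1})\le\kappa$. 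This variance scale produces the Gaussian regime $\exp(-C_3 n t^2/\kappa^2)$ for small $t$. For larger $t$ the tail enters the exponential regime, and I would keep the contributions of the two factors \emph{separate}, controlling $\frac1n\sum_i x_{ij}^2$ at scale $\Sigma_{jj}\le\kappa^2$ and $\frac1n\sum_i y_{ik}^2$ at scale $1$ through chi-square concentration of each squared sub-Gaussian sum. This asymmetry is precisely what yields the two distinct exponential terms $\exp(-C_1 n t/\kappa^2)$ and $\exp(-C_2 n t)$.

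Finally I would assemble the bound by a union bound. The leading sub-Gaussian term must be taken over all $p^2$ entries $(j,k)$, giving the prefactor $2p^2$; the two large-deviation terms only require controlling the maxima $\max_j \frac1n\sum_i x_{ij}^2$ and $\max_k \frac1n\sum_i y_{ik}^2$, each a union over $p$ indices, giving the prefactors $4p$. The restriction $0 < t \le 2\kappa$ places us in the range where these bounds are valid and the constants $C_1,C_2,C_3$ can be chosen independent of $n$ and $p$. The main obstacle is the per-entry sub-exponential inequality with the correct, $\kappa$-only dependence under the \emph{merely marginal} sub-Gaussian assumption \ref{asubgaussian}: the delicate point is the factor $y_{ik} = \langle L_{k\cdot}, x_i\rangle$, a linear combination of correlated marginals whose naive $\psi_2$-parameter would scale with $\norm{L_{k\cdot}}_1$. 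The resolution is to exploit the exact identity $\Var(y_{ik}) = (L\Sigma L^T)_{kk} = 1$ rather than the crude $\ell_1$ bound, so that the product's effective tail scale remains $O(\kappa)$ and all three claimed terms survive with the stated exponents.
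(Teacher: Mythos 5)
Your core argument --- recognizing that $\mathcal W=(S-\Sigma)L^T$ is entrywise a centered average of products of two sub-Gaussian factors with scales $\kappa$ (from $\Var(x_{ij})=\Sigma_{jj}\le\kappa^2$) and $1$ (from $\Var(Lx_i)=L\Sigma L^T=I$), applying a Bernstein-type inequality per entry, and finishing with a union bound over $p^2$ entries while the restriction $t\le 2\kappa$ keeps the per-entry bound in its Gaussian regime --- is exactly the paper's route, which invokes Theorem 2.10 of Boucheron et al.\ with $\nu_{ij}\asymp n\Sigma_{ii}$ and $c_{ij}\asymp\sqrt{\Sigma_{ii}}$; your computation $\Var(x_{ij}y_{ik})=\Sigma_{jj}+\cov(x_{ij},y_{ik})^2\le 2\kappa^2$ is also consistent with it. Where you diverge is in your account of the two $4p\exp(-Cnt)$ terms. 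In the paper these do not come from a large-$t$ regime of the per-entry bound: the proof there works with the \emph{centered} sample covariance, so $(SL^T)_{jk}=n^{-1}\sum_i X_{ij}Y_{ik}-\bar X_j\bar Y_k$, and the two $4p$ terms arise from bounding $\Prob[\max_j|\bar X_j|>\sqrt{t/2}]$ and $\Prob[\max_k|\bar Y_k|>\sqrt{t/2}]$, each a $p$-fold union over sub-Gaussian averages with variance proxies $\Sigma_{jj}/n\le\kappa^2/n$ and $1/n$ respectively --- hence the asymmetric exponents. Your proposed mechanism (chi-square control of $\tfrac1n\sum_i x_{ij}^2$ and $\tfrac1n\sum_i y_{ik}^2$) would not by itself control the deviation $|\tfrac1n\sum_i(x_{ij}y_{ik}-\E[x_{ij}y_{ik}])|$, since Cauchy--Schwarz only bounds the uncentered sum and the mean $(L^{-1})_{jk}$ can itself be as large as $\kappa$; so that step, as sketched, would fail if it were needed. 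It is not needed, however: with the uncentered $S=\tfrac1n\mathbf X^T\mathbf X$ of the main text, your Bernstein term alone already implies the stated three-term bound, so the misattribution is harmless to the conclusion. Finally, you are right to single out the sub-Gaussianity of $y_{ik}=\langle L_{k\cdot},x_i\rangle$ under the merely-marginal Assumption \ref{asubgaussian} as the delicate point; note, though, that $\Var(y_{ik})=1$ does not by itself deliver a sub-Gaussian tail for $y_{ik}$ --- the paper makes the same implicit leap (``following the same argument''), so this is a shared weakness of both arguments rather than a defect unique to yours.
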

    \begin{proof}
      See Appendix \ref{proof:concentration}.
    \end{proof}

    In terms of the event
    \[
      \mathcal{A} = \left\{ \norm{ \mathcal{W}}_\infty 
    \leq \lambda \right\},
  \]
  Lemma \ref{lem:concentration} states that
  \[
    \Prob\left[ \mathcal{A}^c \right] \leq  
    2p^2 \exp \left( -\frac{C_3n \lambda^2}{\kappa^2} \right) +
    4p \exp \left( - \frac{C_1n\lambda}{\kappa^2} \right)
    + 4p \exp \left( - C_2n \lambda \right).
  \]

  The next lemma shows that,
  on the event $\mathcal{A}$ and with the assumption that $\frac{\lambda^2}{n} = o(1)$, the term
  $\left| \mathbf{X}_r^T\mathbf{X}_{\mathcal{I}}
  \left( \mathbf{X}_{\mathcal{I}}^T\mathbf{X}_{\mathcal{I}}\right)^{-1} 
  \mathcal{W}_{\mathcal{I},r} \right|$ can be bounded by $\lambda$ with high probability.

  \begin{lemma}
    Using the general weigthing scheme \eqref{est:generalweight}, 
    we have
    \begin{align}
      \Prob \left[ \left| \mathbf{X}_r^T\mathbf{X}_{\mathcal{I}}
        \left( \mathbf{X}_{\mathcal{I}}^T\mathbf{X}_{\mathcal{I}}\right)^{-1} 
        \mathcal{W}_{\mathcal{I},r} \right| \geq 
      \lambda \Big| \mathcal{A} \right] \leq  
      2\exp\left( -\frac{2 n\alpha^2}{9 \theta_r^{(r)} \kappa^2 K \lambda^2} \right)
      + 2 \exp \left( -\frac{n}{2} \right).
      \nonumber
    \end{align}
    \label{lem:boundingXrWIr}
  \end{lemma}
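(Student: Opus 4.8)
The plan is to reuse the conditional Gaussian decomposition \eqref{eq:decomposition1}, $\mathbf{X}_r^T = \Sigma_{r\mathcal{I}}(\Sigma_{\mathcal{I}\mathcal{I}})^{-1}\mathbf{X}_{\mathcal{I}}^T + E_r^T$, to split the target into a ``signal'' term controlled by the irrepresentable condition and a mean-zero ``noise'' term controlled by concentration. Writing $\eta := \mathbf{X}_{\mathcal{I}}(\mathbf{X}_{\mathcal{I}}^T\mathbf{X}_{\mathcal{I}})^{-1}\mathcal{W}_{\mathcal{I},r}$, we have $\mathbf{X}_r^T\mathbf{X}_{\mathcal{I}}(\mathbf{X}_{\mathcal{I}}^T\mathbf{X}_{\mathcal{I}})^{-1}\mathcal{W}_{\mathcal{I},r} = \Sigma_{r\mathcal{I}}(\Sigma_{\mathcal{I}\mathcal{I}})^{-1}\mathcal{W}_{\mathcal{I},r} + E_r^T\eta$. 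For the first term, Hölder's inequality and Assumption \ref{airrepre} (using $r\in\mathcal{I}_r^c$, so that $\norm{\Sigma_{r\mathcal{I}}(\Sigma_{\mathcal{I}\mathcal{I}})^{-1}}_1\le\frac{6}{\pi^2}(1-\alpha)\le 1-\alpha$) together with $\norm{\mathcal{W}_{\mathcal{I},r}}_\infty\le\norm{\mathcal{W}}_\infty\le\lambda$ on $\mathcal{A}$ give $|\Sigma_{r\mathcal{I}}(\Sigma_{\mathcal{I}\mathcal{I}})^{-1}\mathcal{W}_{\mathcal{I},r}|\le(1-\alpha)\lambda$. Hence, conditional on $\mathcal{A}$, the event in question is contained in $\{|E_r^T\eta|\ge\alpha\lambda\}$, reducing matters to a tail bound on the noise term.

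Next I would bound $\norm{\eta}_2$ on $\mathcal{A}$. Since $\norm{\mathcal{W}_{\mathcal{I},r}}_\infty\le\lambda$ forces $\norm{\mathcal{W}_{\mathcal{I},r}}_2\le\sqrt{K}\lambda$, and $\norm{\eta}_2^2 = \frac1n\mathcal{W}_{\mathcal{I},r}^T(\frac1n\mathbf{X}_{\mathcal{I}}^T\mathbf{X}_{\mathcal{I}})^{-1}\mathcal{W}_{\mathcal{I},r}$, a Wishart smallest-eigenvalue estimate (concentrating $\sigma_{\min}(\frac1n\mathbf{X}_{\mathcal{I}}^T\mathbf{X}_{\mathcal{I}})$ above a constant multiple of $\sigma_{\min}(\Sigma_{\mathcal{I}\mathcal{I}})\ge\kappa^{-2}$, and responsible for the $2\exp(-n/2)$ term) yields $\norm{\eta}_2^2\le\frac{9\kappa^2 K\lambda^2}{4n}$ with high probability. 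Conditioning on $\mathbf{X}_{\mathcal{I}}$, the term $E_r^T\eta$ is then controlled by a Gaussian tail with variance proxy $\theta_r^{(r)}\norm{\eta}_2^2$ — exactly the kind of conditional estimate packaged by Lemma \ref{lem:probargu} — giving $\Prob[|E_r^T\eta|\ge\alpha\lambda]\le 2\exp(-\frac{(\alpha\lambda)^2}{2\theta_r^{(r)}\norm{\eta}_2^2})$, which upon inserting the $\norm{\eta}_2$ bound produces the first term of the claimed inequality. Passing from $\Prob[\cdot\mid\mathcal{A}]$ to $\Prob[\cdot]$ via $\Prob[\cdot\mid\mathcal{A}]\le\Prob[\cdot]/\Prob[\mathcal{A}]$ and combining the two contributions finishes the estimate; this parallels the companion bound for $\mathbf{X}_r^T\mathbf{C}_{\mathcal{I}}$ in Lemma \ref{lem:boundingXrCI}, with $\mathcal{W}_{\mathcal{I},r}$ in place of the dual variables.

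The main obstacle is that $\eta$ is \emph{not} independent of $E_r$, so a naive ``treat $\eta$ as fixed'' Gaussian tail is not immediately licensed. Indeed, because $\mathcal{W}_{\mathcal{I},r} = (S L^T)_{\mathcal{I},r}$ while $(L^{-1})_{\mathcal{I},r}=0$ (as $L^{-1}$ is lower triangular), the regression interpretation \eqref{eq:regression} gives $\mathcal{W}_{\mathcal{I},r} = \frac1n\mathbf{X}_{\mathcal{I}}^T(\mathbf{X}_{1:r}\beta^\ast) = \frac{L_{rr}}{n}\mathbf{X}_{\mathcal{I}}^T E_r$, so that $E_r^T\eta = \frac{L_{rr}}{n}\norm{\mathbf{P}_{\mathcal{I}}E_r}_2^2\ge 0$ is genuinely quadratic in $E_r$, where $\mathbf{P}_{\mathcal{I}} = \mathbf{I}-\mathbf{O}_{\mathcal{I}}$. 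I would resolve this by exploiting the nonnegativity (so that $\{|g|\ge\lambda\}\subseteq\{E_r^T\eta\ge\alpha\lambda\}$ is genuinely one-sided) and the fact that, conditional on $\mathbf{X}_{\mathcal{I}}$, the quantity $\norm{\mathbf{P}_{\mathcal{I}}E_r}_2^2$ is distributed as $\theta_r^{(r)}\chi^2_K$, so the upper tail is controllable through Lemma \ref{lem:chisqconcentration}. The delicate step is to verify that this $\chi^2$ route, the conditional-variance route of Lemma \ref{lem:probargu}, and the $\norm{\eta}_2$ route all deliver a bound of the same order and that it remains compatible with the sample-size scaling \eqref{theory:samplesizer}; reconciling these is where the careful bookkeeping lies.
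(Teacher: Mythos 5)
Your skeleton is the paper's: condition on $\mathbf{X}_{\mathcal{I}}$, split via \eqref{eq:decomposition1} into $\Sigma_{r\mathcal{I}}(\Sigma_{\mathcal{I}\mathcal{I}})^{-1}\mathcal{W}_{\mathcal{I},r}$ (controlled on $\mathcal{A}$ by Assumption \ref{airrepre} and H\"older) plus $B^{(r)}=E_r^T\mathbf{X}_{\mathcal{I}}(\mathbf{X}_{\mathcal{I}}^T\mathbf{X}_{\mathcal{I}})^{-1}\mathcal{W}_{\mathcal{I},r}$, bound the variance proxy by $9\theta_r^{(r)}\kappa^2K\lambda^2/n$ using Lemma \ref{lem:mineigenvalue} (the source of the $2\exp(-n/2)$ term), and finish with Lemma \ref{lem:probargu}. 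Where you genuinely diverge is your third paragraph, and the divergence is a correction rather than a detour: the paper computes $\Var(B^{(r)}\mid\mathbf{X}_{\mathcal{I}})=\frac{\theta_r^{(r)}}{n}\mathcal{W}_{\mathcal{I},r}^T(\frac1n\mathbf{X}_{\mathcal{I}}^T\mathbf{X}_{\mathcal{I}})^{-1}\mathcal{W}_{\mathcal{I},r}$, i.e., it treats $\mathcal{W}_{\mathcal{I},r}$ as fixed given $\mathbf{X}_{\mathcal{I}}$ and $B^{(r)}$ as conditionally mean-zero Gaussian. Your identity $\mathcal{W}_{\mathcal{I},r}=\frac{L_{rr}}{n}\mathbf{X}_{\mathcal{I}}^TE_r$ (valid because $(L^{-1})_{\mathcal{I},r}=0$ and $\mathbf{X}_{1:r}L_{r,1:r}^T=L_{rr}E_r$ under Assumption \ref{asparsity}) shows this is not licensed: $B^{(r)}=\frac{L_{rr}}{n}\norm{(\mathbf{I}-\mathbf{O}_{\mathcal{I}})E_r}_2^2$ is a nonnegative quadratic form in $E_r$ with conditional mean $K/(nL_{rr})>0$, and Lemma \ref{lem:probargu} itself conditions on $\mathbf{X}_r$, under which $E_r$ is degenerate. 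Your $\chi^2_K$ route is the honest treatment of this term. What the paper's shortcut buys is brevity and the stated constant; what your route buys is correctness, at the cost of the bookkeeping you flag: keeping $\frac{L_{rr}\theta_r^{(r)}}{n}\chi^2_K$ below $\alpha\lambda\asymp\alpha\sqrt{\theta_r\log r/n}$ needs $K\lesssim\sqrt{n\log r}$, which is not implied by \eqref{theory:samplesizer} (that only forces $K\lesssim n/\log J_r$), so your repair requires either this mild additional restriction on $K$ or a restated threshold in the lemma. With that caveat made explicit, your argument is the more defensible of the two.
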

  \begin{proof}
    Recall that by conditioning on $\mathbf{X}_{\mathcal{I}}$, the decomposition \eqref{eq:decomposition1} gives
    \begin{align}
      \mathbf{X}_r^T\mathbf{X}_{\mathcal{I}}
      \left( \mathbf{X}_{\mathcal{I}}^T\mathbf{X}_{\mathcal{I}}\right)^{-1} 
      \mathcal{W}_{\mathcal{I},r} = \Sigma_{r \mathcal{I}} \left( 
      \Sigma_{\mathcal{I} \mathcal{I}} \right)^{-1} \mathcal{W}_{\mathcal{I},r} 
      + E^T_r \mathbf{X}_{\mathcal{I}}
      \left( \mathbf{X}_{\mathcal{I}}^T\mathbf{X}_{\mathcal{I}}\right)^{-1} 
      \mathcal{W}_{\mathcal{I},r}.
      \nonumber
    \end{align}
    On the event $\mathcal{A}$, by \ref{airrepre} and 
    \eqref{eq:boundingdualvariable},
    \begin{align}
      \left| \Sigma_{r \mathcal{I}} \left( 
      \Sigma_{\mathcal{I} \mathcal{I}} \right)^{-1} \mathcal{W}_{\mathcal{I},r} 
      \right| \leq  \vertiii{ \Sigma_{r \mathcal{I}} \left( 
      \Sigma_{\mathcal{I} \mathcal{I}} \right)^{-1} }_\infty
      \norm{\mathcal{W}_{\mathcal{I},r}}_\infty \leq \lambda.
      \nonumber
    \end{align}

    Note that $\Var\left( E_{ir} \right) = \theta_r^{(r)}$ for $i=1,\dots,n$.
    Let
    $B^{(r)}:= E_r^T \mathbf{X}_{\mathcal{I}} \left( \mathbf{X}^T_{\mathcal{I}}
    \mathbf{X}_{\mathcal{I}}\right)^{-1} \mathcal{W}_{\mathcal{I},r}$, 
    then $B^{(r)}$ has mean zero and variance at most
    \[
      \Var\left( B^{(r)} \Big| \mathbf{X}_{\mathcal{I}} \right) = 
      \frac{\theta_r^{(r)}}{n} \mathcal{W}_{\mathcal{I},r}^T \left( \frac{1}{n}
      \mathbf{X}_{\mathcal{I}}^T \mathbf{X}_{\mathcal{I}}\right)^{-1} 
      \mathcal{W}_{\mathcal{I},r}
      \leq \frac{9\theta_r^{(r)} \kappa^2 K \lambda^2}{n},
    \]
    with probability greater than $1-2\exp\left( \frac{n}{2} \right)$. The result
    follows from Lemma \ref{lem:probargu}.
  \end{proof}

  Putting everything together and choosing the tuning parameter from 
  \eqref{theory:lambdar},
  with a union bound argument and 
  some algebra, we have shown that conditioned on $\mathbf{X}_{\mathcal{I}}$, 
  \begin{align}
    &\Prob \left[ |\delta_r| \geq  \frac{1}{\theta_r^{(r)}} \frac{n}{n-K} \frac{1}{1-\varepsilon}
    \frac{5}{2} \lambda \right] \leq  
    \Prob \left[ |\delta_r| \geq  \frac{5}{2\theta_r^{(r)}} \lambda \right]
    \leq 
    \Prob \left[ |\delta_r| \geq  \frac{5}{2\theta_r^{(r)}} \lambda \Big| \mathcal{A} \right] + 
    \Prob \left[ \mathcal{A}^c \right]
    \nonumber\\
    &\leq \exp \left( -\frac{1}{4n}\left( 1-\frac{K}{n} \right)\varepsilon^2 \right)+
    2\exp\left( -\frac{n\alpha^2}{3 \theta_r \kappa^2 \pi^2 K} \right)+
    2\exp\left( -\frac{2 n\alpha^2}{9 \theta_r \kappa^2 K \lambda^2} \right)+
    4 \exp \left( -\frac{n}{2} \right)
    \nonumber\\
    &+2p^2 \exp \left( -\frac{C_3n \lambda^2}{\kappa^2} \right)
    + 4p \exp \left( - \frac{C_1n\lambda}{\kappa^2} \right)
    + 4p \exp \left( - C_2n\lambda \right)
    \nonumber\\
    &\leq c_4 \exp\left( -c_5 n \right) + \frac{c_6}{p},
    \label{eq:boundingdeltar}
  \end{align}
  for some constants $c_4,c_5,c_6,x>0$ that do not depend on $n$ and $p$.

  We now consider a bound 
  for $\delta_{\mathcal{I}}$. Recall from \eqref{eq:deltaI} that
  \begin{align}
    \delta_{\mathcal{I}}= F_1 + F_2  
    \nonumber
  \end{align}
  where
  \begin{align}
    F_1 = & - \left(\mathbf{X}_{\mathcal{I}}^T 
    \mathbf{X}_{\mathcal{I}} \right)^{-1} 
    \mathbf{X}_{\mathcal{I}}^T \mathbf{X}_r \delta_r,
    \nonumber\\
    F_2 = & - \left(\frac{1}{n} \mathbf{X}_{\mathcal{I}}^T 
    \mathbf{X}_{\mathcal{I}} \right)^{-1} 
    \left( \mathcal{W}_{\mathcal{I},r} + \frac{\lambda}{2} \mathbf{D} \right)
    \quad \text{  with  } \quad \mathbf{D} = 
    \left( \sum_{\ell=1}^{r-1} W^{(\ell)}\ast \tilde{a}^{(\ell)} \right)_
    {\mathcal{I}}.
    \nonumber
  \end{align}
  An $\ell_\infty$ bound of $F_2$ is given by
  \begin{align}
    \norm{F_2}_\infty  \leq
    \norm{ \left(  \left(\frac{1}{n} \mathbf{X}_{\mathcal{I}}^T 
    \mathbf{X}_{\mathcal{I}} \right)^{-1} - 
    \left( \Sigma_{\mathcal{I}\mathcal{I}} \right)^{-1}
    \right) 
    \left( \mathcal{W}_{\mathcal{I},r} + \frac{\lambda}{2} \mathbf{D}\right)}_\infty
    + \norm{   \left( \Sigma_{\mathcal{I}\mathcal{I}} \right)^{-1}
    \left( \mathcal{W}_{\mathcal{I},r} + \frac{\lambda}{2} \mathbf{D}\right)}_\infty.
    \label{bound:F2}
  \end{align}
  On the event $\mathcal{A}$, by \eqref{eq:boundingdualvariable},
  \begin{align}
    &\norm{ \left( \Sigma_{\mathcal{I}\mathcal{I}} \right)^{-1}
    \left( \mathcal{W}_{\mathcal{I},r} + \frac{\lambda}{2} \mathbf{D}\right)}_\infty
    \leq \vertiii{\left( \Sigma_{\mathcal{I}\mathcal{I}} \right)^{-1}}_\infty
    \left( \norm{\mathcal{W}_{\mathcal{I},r}}_\infty + 
    \frac{\lambda}{2} \norm{\mathbf{D}}_\infty\right)
    \nonumber\\
    &\leq \vertiii{\left( \Sigma_{\mathcal{I}\mathcal{I}} \right)^{-1}}_\infty
    \left( 1+ \frac{\pi^2}{12} \right) \lambda 
    \leq 2 \lambda \vertiii{\left( \Sigma_{\mathcal{I}\mathcal{I}} \right)^{-1/2}}^2_\infty
    \nonumber 
  \end{align}

  To deal with the first term in \eqref{bound:F2}, note that $X_{\mathcal{I}} = 
  W_{\mathcal{I}} \left( \Sigma_{\mathcal{I}\mathcal{I}} \right)^{1/2}$,
  where $W_{\mathcal{I}} \in \real^{n \times K}$ is a standard Gaussian random 
  matrix, i.e., $\left(W_{\mathcal{I}}\right)_{ij} \sim N(0,1)$. Thus we can write
  it as
  \begin{align}
    \norm{ \left( \Sigma_{\mathcal{I}\mathcal{I}} \right)^{-1/2} 
    \left[ \left( \frac{1}{n} W_{\mathcal{I}}^T W_{\mathcal{I}} \right)^{-1} -
    \mathbf{I}_{K}\right] 
    \left( \Sigma_{\mathcal{I}\mathcal{I}} \right)^{-1/2} 
    \left( \mathcal{W}_{\mathcal{I},r} + \frac{\lambda}{2} \mathbf{D}\right)}_\infty
    \leq\vertiii{\left(\Sigma_{\mathcal{I}\mathcal{I}}\right)^{-1/2}}_\infty
    G,
    \nonumber
  \end{align}
  where
  \begin{align}
    G = 
    \norm{ \left[ \left( \frac{1}{n} W_{\mathcal{I}}^T W_{\mathcal{I}} \right)^{-1} -
    \mathbf{I}_{K}\right] 
    \left( \Sigma_{\mathcal{I}\mathcal{I}} \right)^{-1/2} 
    \left( \mathcal{W}_{\mathcal{I},r} + \frac{\lambda}{2} \mathbf{D}\right)}_\infty.
    \nonumber
  \end{align}

  By Lemma 5 in \cite{wainwright2009sharp}, we have, for some constant $c_7>0$.
  \begin{align}
    \Prob \left[ G \geq 
      \norm{\left( \Sigma_{\mathcal{I}\mathcal{I}} \right)^{-1/2} 
      \left( \mathcal{W}_{\mathcal{I},r} + \frac{\lambda}{2} \mathbf{D}\right)}_\infty
    \Big| \mathbf{X}_{\mathcal{I}} \right]
    \leq 4 \exp \left( -c_7 \min\left\{ K, 
    \log J \right\} \right)
    \nonumber
  \end{align}

  Note that conditioning on $\mathcal{A}$, 
  $\norm{\left( \Sigma_{\mathcal{I}\mathcal{I}} \right)^{-1/2} 
  \left( \mathcal{W}_{\mathcal{I},r} + \frac{\lambda}{2} \mathbf{D}\right)}_\infty$ 
  is upper bounded by \newline
  $2 \lambda \vertiii{\left( \Sigma_{\mathcal{I}\mathcal{I}} \right)^{-1/2}}_\infty$.
  Thus,
  \begin{align}
    \Prob \left[ G \geq 2\lambda
      \vertiii{\left( \Sigma_{\mathcal{I}\mathcal{I}} \right)^{-1/2}}^2_\infty
    \Big| \mathcal{A} \right]
    \leq 4 \exp \left( -c_7 \min\left\{ K, 
    \log J \right\} \right),
    \nonumber
  \end{align}
  and
  \begin{align}
    &\Prob \left[ \norm{F_2}_\infty \geq  4 \lambda \vertiii{\left( 
    \Sigma_{\mathcal{I}\mathcal{I}}\right)^{-1/2}}^2_\infty \right]
    \leq \Prob \left[ \norm{F_2}_\infty \geq  4\lambda \vertiii{\left( 
      \Sigma_{\mathcal{I}\mathcal{I}}\right)^{-1/2}}^2_\infty 
    \Big| \mathcal{A}\right] + \Prob \left[ \mathcal{A}^c \right]
    \nonumber\\
    &\leq 4 \exp \left( -c_7 \min\left\{ K, \log J \right\} \right) + \frac{c_6}{p}.
    \label{eq:probF2}
  \end{align}

  Turning to $F_1$, conditioned on $\mathbf{X}_{\mathcal{I}}$, by decomposition
  \eqref{eq:decomposition1}, we have that
  \begin{align}
    \norm{ F_1 }_\infty \leq \norm{\left( \Sigma_{\mathcal{I} \mathcal{I}} \right)^{-1} 
    \Sigma_{\mathcal{I} r}}_\infty |\delta_r| +
    \norm{\left(\mathbf{X}_{\mathcal{I}}^T 
    \mathbf{X}_{\mathcal{I}} \right)^{-1} 
    \mathbf{X}_{\mathcal{I}}^T E_r\delta_r}_\infty.
    \nonumber
  \end{align}

  By \eqref{eq:boundingdeltar} and \ref{airrepre},
  \begin{align}
    \Prob\left[  \norm{\left( \Sigma_{\mathcal{I} \mathcal{I}} \right)^{-1} 
      \Sigma_{\mathcal{I} r}}_\infty |\delta_r| \geq \frac{5}{2\theta_r^{(r)}} \lambda
    \right] \leq c_4 \exp\left( -c_5 n \right) + \frac{c_6}{p}.
    \nonumber
  \end{align}
  Consider each coordinate $j \in \mathcal{I}$ of the random term whose variance is bounded by
  \begin{align}
    \Var \left[ \mathbf{e}_j^T \left(\mathbf{X}_{\mathcal{I}}^T 
      \mathbf{X}_{\mathcal{I}} \right)^{-1} 
    \mathbf{X}_{\mathcal{I}}^T E_r\delta_r \Big| \mathbf{X}_{\mathcal{I}}\right] 
    \leq \theta_r \vertiii{\left( \frac{1}{n} \mathbf{X}_{\mathcal{I}}^T
    \mathbf{X}_{\mathcal{I}} \right)^{-1}}_2 \frac{\delta_r^2}{n}.
    \nonumber
  \end{align}
  By Lemma \ref{lem:mineigenvalue} and \eqref{eq:boundingdeltar},
  \begin{align}
    \Prob\left[  \Var \left[ \mathbf{e}_j^T \left(\mathbf{X}_{\mathcal{I}}^T 
      \mathbf{X}_{\mathcal{I}} \right)^{-1} 
    \mathbf{X}_{\mathcal{I}}^T E_r\delta_r \Big| \mathbf{X}_{\mathcal{I}}\right] 
  \geq \frac{235}{4} \frac{\kappa^2}{\theta_r} \frac{\lambda^2}{n} \right] 
  \leq 2 \exp\left( -\frac{n}{2}  \right) + c_4 \exp\left( -c_5 n \right) + \frac{c_6}{p}.
  \nonumber
\end{align}
Thus by Lemma \ref{lem:probargu},
\begin{align}
  \Prob\left[ \norm{ \left(\mathbf{X}_{\mathcal{I}}^T \mathbf{X}_{\mathcal{I}} \right)^{-1} 
  \mathbf{X}_{\mathcal{I}}^T E_r\delta_r}_\infty \geq \frac{5}{2\theta_r^{(r)}} \lambda  \right] 
  \leq &2 \exp\left( - \frac{n}{18 \theta_r \kappa^2} \right) + 2 \exp\left( -\frac{n}{2}  \right) \nonumber\\
  + &c_4 \exp\left( -c_5 n \right) + \frac{c_6}{p},
  \nonumber
\end{align}
and
\begin{align}
  \Prob\left[ \norm{F_1}_\infty \geq \frac{5}{\theta_r^{(r)}} \lambda  \right] 
  \leq 2 \exp\left( - \frac{n}{18 \theta_r \kappa^2} \right) + 
  2 \exp\left( -\frac{n}{2}  \right) + c_4 \exp\left( -c_5 n \right) + \frac{c_6}{p}.
  \nonumber
\end{align}

Combining with \eqref{eq:boundingdeltar} and \eqref{eq:probF2}, we have
\begin{align}
  \Prob\left[ \norm{\delta}_\infty \geq 4 \lambda 
    \vertiii{\left(\Sigma_{\mathcal{I}\mathcal{I}}\right)^{-1/2}}^2_\infty +
  \frac{5}{\theta_r^{(r)}} \lambda \right] \leq c_8 \exp\left( -c_9 n \right) + 2 \frac{c_6}{p} +
  4 \exp \left( -c_7 \min\left\{ K, \log J \right\} \right),
  \nonumber
\end{align}
for some constants $c_8,c_9 > 0$ that do not depend on $n$ and $J$.
\end{proof}

\subsection{Proof of Property 3 in Theorem \ref{thm:row}}
Finally we establish a $\beta_{\text{min}}$ condition, which, combined
with the $\ell_\infty$ rate, gives the other direction of the support recovery, 
i.e., $K(\hat{\beta}) \geq K$.

By the triangle inequality
\begin{align}
  \left| \tilde{\beta}_j\right| \geq \left|\beta_j\right| - 
  \left| \tilde{\beta}_j - \beta_j\right|.
  \nonumber
\end{align}
So if we have 
\begin{align}
  \max_{j \geq J+1}\left\{ \left|\beta_j\right| - 
    \left| \tilde{\beta}_j - \beta_j\right| \right\} > 0,
    \nonumber
  \end{align}
  then $K(\tilde{\beta}) \geq K$.

  \section{Proof of Theorem \ref{thm:matrix}} \label{online:proofofmatrix}
  \begin{proof}
    The overall proof techniques are the same as the proof of Theorem \ref{thm:row}.
    The first part of the theorem holds if 
    $\max_{2\leq r \leq p} \max_{1\leq \ell \leq J_r} |\tilde{a}^{(r\ell)}|<1 $.
    Now for each $r = 2,\dots,p$ we proceed with the same primal-dual 
    witness procedure and end up with the same decomposition \eqref{eq:decompofdual}.

    Assumption \ref{airrepre} ensures that 
    $\max_{2\leq r \leq p} \max_{1\leq \ell \leq J_r}
    |F^{(r\ell)}|\leq 1-\alpha$.
    Following the same line of proof to deal with random term $R^{(r\ell)}$, 
    we have that $R^{(r\ell)}$ is zero-mean Gaussian with conditional
    variance bounded above by the scaling
    \begin{align}
      \theta_r \bar{M}^{(r)}_n\left( \varepsilon \right) &= 
      \frac{3 \kappa^2 \pi^2 \theta_r}{2} \frac{K_r^\ast}{n} +
      \frac{\theta_r}{\theta^{(r)}_r}
      \frac{1}{\left( n-K_r^\ast \right)\left( 1-\varepsilon \right)}
      +\frac{16\theta_r}{n\lambda^2}
      \nonumber\\
      \leq& 
      \frac{3 \kappa^2 \pi^2 \theta_r}{2} \left( \frac{K}{n} +
      \frac{\kappa^2}{n\theta_r^{(r)}\left( 1-\varepsilon \right)^2}
      +\frac{16}{n\lambda^2}\right),
      \nonumber
    \end{align}
    for $\varepsilon \in \left( 0,\frac{1}{2} \right)$ with high probability,
    where we use the fact that 
    $K = o(n)$ implies that $\frac{K}{n} \leq \varepsilon$ for $n$ large.
    And
    \[
      \Prob\left[ \left|R^{(r\ell)}\right| 
      \geq \alpha \right]
      \leq 2 \exp\left( -\frac{\alpha^2}{ 2 \theta_r \bar{M}^{(r)}_n
    \left( \varepsilon \right)} \right) + 
    7 \exp\left( -c_3 n\right).
  \]
  Thus,
  \begin{align}
    \Prob\left[ \max_{2\leq r\leq p} \max_{1\leq \ell \leq J_r}
    \left|R^{(r\ell)}\right| \geq \alpha \right]
    &\leq 2 \sum_{r=2}^p J_r \exp\left( -\frac{\alpha^2}
    { 2 \theta_r \bar{M}^{(r)}_n
  \left( \varepsilon \right)} \right) + 
  7 \sum_{r=2}^p J_r \exp\left( -c_3 n\right)
  \nonumber\\
  &\leq p^2 \exp\left( -\frac{\alpha^2}{3 \kappa^2 \pi^2 \theta \frac{K}{n}
  + \frac{8 \theta \kappa^2}{n}
  +\frac{32\theta}{n\lambda^2}} \right) + 
  \frac{7}{2} p^2 \exp\left( -c_3 n\right).
  \nonumber
\end{align}

For the exponential term to decay faster than $p^2$, we need
\begin{align}
  \frac{n}{\log p} > \max\left\{ \frac{2}{\alpha^2}
  \left( 3\kappa^2 \pi^2 \theta K + 8 \kappa^2 \theta
  + \frac{32\theta}{\lambda^2} \right), \frac{2}{c_3} \right\}.
  \nonumber
\end{align}
\end{proof}

\section{Proof of Theorem \ref{thm:Omegabounds}} \label{online:proofofrates}
\begin{lemma} \label{cor:L2Fbounds}
  Using the notation and conditions in Theorem \ref{thm:Omegabounds}, the following deviation bounds hold with high probability:
  \begin{align}
    &\vertiii{\hat{L} - L}_\infty \leq
    \zeta_\Gamma \left( K + 1 \right) \sqrt{\frac{\log p}{n}},
    \nonumber\\
    &\vertiii{\hat{L} - L}_1 \leq
    \zeta_\Gamma \left( K + 1 \right) \sqrt{\frac{\log p}{n}},
    \nonumber\\
    &\vertiii{\hat{L} - L}_2 \leq
    \zeta_\Gamma \left( K + 1 \right) \sqrt{\frac{\log p}{n}},
    \nonumber\\
    &\norm{\hat{L} - L}_F \leq
    \zeta_\Gamma \sqrt{\frac{\left( s + p \right) \log p}{n}}.
    \nonumber
  \end{align}
\end{lemma}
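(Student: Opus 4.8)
The plan is to reduce every matrix-norm bound to the single element-wise estimate already supplied by Theorem \ref{thm:matrix}, combined with the banded support structure that theorem guarantees. Throughout I would work on the high-probability event (of probability at least $1 - cp^{-1}$) on which the conclusions of Theorem \ref{thm:matrix} hold; on this event we have simultaneously $\hat K_r \leq K_r$ for every $r$ and the element-wise deviation bound $\norm{\hat L - L}_\infty \leq \lambda(4\max_r \vertiii{(\Sigma_{\I_r\I_r})^{-1}}_\infty + 5\kappa^2)$. Substituting $\lambda = \frac{8}{\alpha}\sqrt{2\theta\log p / n}$ and recalling the definition of $\zeta_\Gamma$ (the same constant written $\zeta_\Sigma$ in Theorem \ref{thm:Omegabounds}), this reads simply $\norm{\hat L - L}_\infty \leq \zeta_\Gamma \sqrt{\log p / n}$, which is the building block for all four displayed inequalities.

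The key combinatorial observation is that $\hat L - L$ inherits a banded sparsity pattern. Since $L$ is lower-triangular with row bandwidths $K_r$ and $\hat K_r \leq K_r \leq K$, an entry $(\hat L - L)_{rj}$ can be nonzero only when $0 \leq r - j \leq K_r \leq K$. Hence each row of $\hat L - L$ has at most $K+1$ nonzero entries, and, because for each fixed column $j$ the same constraint forces the nonzero rows into $\{j,\dots,j+K\}$, each column likewise has at most $K+1$ nonzeros. This controls the two sum-based operator norms directly: since $\vertiii{M}_\infty$ is the maximum absolute row sum and $\vertiii{M}_1$ the maximum absolute column sum, both are at most $(K+1)\norm{\hat L - L}_\infty \leq (K+1)\zeta_\Gamma\sqrt{\log p/n}$, giving the first two bounds. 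For the spectral norm I would invoke the standard interpolation inequality $\vertiii{M}_2 \leq \sqrt{\vertiii{M}_1 \vertiii{M}_\infty}$; as both factors share the same bound, $\vertiii{\hat L - L}_2 \leq (K+1)\zeta_\Gamma\sqrt{\log p/n}$ follows.

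Finally, for the Frobenius norm I would count the total number of possibly-nonzero entries of $\hat L - L$, which is at most $s + p$: the $s = \sum_r K_r$ off-diagonal band positions plus the $p$ diagonal positions. Bounding each such entry by $\norm{\hat L - L}_\infty$ gives $\norm{\hat L - L}_F \leq \sqrt{s+p}\,\norm{\hat L - L}_\infty \leq \zeta_\Gamma\sqrt{(s+p)\log p/n}$. There is no genuine analytic difficulty here, since all the probabilistic work is already absorbed into Theorem \ref{thm:matrix}; the only steps requiring care are the bookkeeping that converts the variable row bandwidths $K_r$ into a uniform $K$-band for the column-sum count, and the invocation of the $\vertiii{\cdot}_2 \leq \sqrt{\vertiii{\cdot}_1\vertiii{\cdot}_\infty}$ interpolation. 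Everything else is a direct consequence of the element-wise bound and the sparsity count.
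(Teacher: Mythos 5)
Your proposal is correct and follows essentially the same route as the paper's proof: restrict to the high-probability event of Theorem \ref{thm:matrix}, use support containment to count at most $K+1$ nonzeros per row and per column (and $s+p$ in total), bound row/column sums by $(K+1)\norm{\hat L - L}_\infty$, and obtain the spectral norm via $\vertiii{M}_2 \leq \sqrt{\vertiii{M}_1\vertiii{M}_\infty}$. You also correctly resolve the paper's notational slip identifying $\zeta_\Gamma$ with the $\zeta_\Sigma$ defined in Theorem \ref{thm:Omegabounds}.
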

\begin{proof}
  By Theorem \ref{thm:matrix}, with high probability, the support of $\hat{L}$ is contained in the
  true support and
  \begin{align}
    \norm{\hat{L} - L}_\infty \leq \zeta_\Gamma \sqrt{\frac{\log p}{n}}.
    \nonumber
  \end{align}
  Note that 
  \begin{align}
    &\vertiii{\hat{L} - L}_\infty = \max_{2\leq r \leq p} \sum_{c=1}^r
    \left|\hat{L}_{rc} - L_{rc}\right|
    \leq \max_{2\leq r \leq p} \left( K_r+1 \right) \norm{\hat{L} - L}_\infty
    \leq \left( K+1 \right) \norm{\hat{L} - L}_\infty.
    \nonumber
  \end{align}
  Denote $D = \max_{1 \leq c \leq p-1} D_c$ where 
  $D_c = \left|\left\{ r = c,\dots,p: L_{rc} \neq 0 \right\} \right|$.
  Observing that $D \leq K$, we have
  \begin{align}
    \vertiii{\hat{L} - L}_1 &= \max_{1\leq c \leq p-1} \sum_{r=1}^c
    \left|\hat{L}_{rc} - L_{rc}\right|
    \leq \max_{1\leq c \leq p-1} 
    \left( D_c+1 \right) \norm{\hat{L} - L}_\infty
    \nonumber\\
    &\leq \left( D+1 \right) \norm{\hat{L} - L}_\infty
    \leq \left( K+1 \right) \norm{\hat{L} - L}_\infty.
    \nonumber
  \end{align}

  By H\"{o}lder's inequality
  \begin{align}
    \vertiii{\hat{L} - L}_2 \leq \sqrt{ \vertiii{\hat{L} - L}_1
    \vertiii{\hat{L} - L}_\infty}.
    \nonumber	
  \end{align}

  Finally for Frobenius norm,
  \begin{align}
    \norm{\hat{L} - L}_F^2 = \sum_{r=2}^p \sum_{ c = J_r+1}^r
    \left( \hat{L}_{rc} - L_{rc} \right)^2
    \leq \sum_{r=2}^p \sum_{ c = J_r+1}^r
    \norm{\hat{L} - L}_\infty^2 
    \leq \zeta_\Gamma^2 \left( \sum_r K_r + p \right) \frac{\log p}{n}.
    \nonumber
  \end{align}
\end{proof}

\begin{proof}[of Theorem \ref{thm:Omegabounds}]
  First note that 
  \begin{align}
    \hat{L}^T \hat{L} - L^TL &= \left( \hat{L} - L \right)^T \left( \hat{L} - L \right) + \hat{L}^T L + L^T \hat{L} - 2 L^TL \nonumber\\
    &= \left( \hat{L} - L \right)^T \left( \hat{L} - L \right) + \left( \hat{L} - L \right)^T L + L^T \left( \hat{L} - L \right).
    \nonumber
  \end{align}
  Thus,
  \begin{align}
    \norm{\hat{L}^T\hat{L} - {L}^T L}_\infty &\leq
    \vertiii{\hat{L}- L}_\infty \norm{\hat{L} - L}_\infty + 
    2\vertiii{L}_\infty \norm{\hat{L} - L}_\infty,
    \nonumber
    \\
    \vertiii{\hat{L}^T\hat{L} - {L}^T L}_1 &= 
    \vertiii{\hat{L}^T\hat{L} - {L}^T L}_\infty \leq
    2 \vertiii{L}_\infty \vertiii{\hat{L} - L}_\infty + 
    \vertiii{\hat{L} - L}_\infty^2.
    \nonumber
  \end{align}
  By H\"{o}lder's inequality
  \begin{align}
    \vertiii{\hat{L}^T\hat{L} - L^TL}_2 \leq \sqrt{ \vertiii{\hat{L}^T\hat{L} - L^TL}_1
    \vertiii{\hat{L}^T\hat{L} - L^TL}_\infty}.
    \nonumber	
  \end{align}
  Finally, for Frobenius norm, observe that 
  \begin{align}
    \norm{L^T \left( \hat{L} - L \right)}_F &= \norm{\vec \left( L^T \left( \hat{L} - L \right) \right) }_2
    = \norm{\left( I_p \otimes L^T \right) \vec\left( \hat{L} - L \right)}_2
    \nonumber \\
    &\leq \vertiii{I_p \otimes L^T}_2 \norm{\hat{L} - L}_F = \vertiii{L}_2 \norm{\hat{L} - L}_F.
    \nonumber
  \end{align}
  Applying the same strategy to $\norm{\left( \hat{L} - L \right)\left( \hat{L} - L \right)}_F$,
  we have
  \begin{align}
    \norm{\hat{L}^T\hat{L} - L^TL}_F \leq \left( \vertiii{\hat{L} - L}_2 + 2\vertiii{L}_2 \right) \norm{\hat{L} - L}_F,
    \nonumber
  \end{align}
  then the results follow from Corollary \ref{cor:L2Fbounds}.
\end{proof}

\section{Proof of Theorem \ref{thm:Fbound}}\label{proof:Fbound}
\begin{proof}
  We adapt the proof technique of \cite{rothman2008sparse}. Let \begin{align}
    G(\Delta) = & -2 \log\det\left(L + \Delta\right) + \operatorname{tr}\left(S \left( L + \Delta \right)^T \left( L+ \Delta \right)\right)+ \lambda\left\|\left(\Delta+L\right)\right\|_{2,1}^\ast \nonumber\\
    & + 2\log\det L-\operatorname{tr}\left(S L^T L\right)- \lambda\left\|L\right\|_{2,1}^\ast,
    \label{eq:Gdelta}
  \end{align}
  where $L$ is the inverse of the Cholesky factor of the true covariance matrix, and the penalty is defined above as
  \begin{align}
    \|L \|^{\ast}_{2,1}=\sum_{r = 2}^p \sum_{\ell=1}^{r-1}\sqrt{\sum_{m=1}^{\ell}w_{\ell m}^2L_{rm}^2}.
    \nonumber
  \end{align}
  Since the estimator $\hat{L}$ is defined as
  \begin{align}
    \hat{L} = \argmin_{L_{jk} = 0: j<k} \left\{ -2 \log\det L + \tr\left( SL^T L \right) + \lambda \norm{L}_{2,1}^\ast \right\},
    \nonumber
  \end{align}
  it follows that $G(\Delta)$ is minimized at $\hat{\Delta}=\hat{L} - L$.
  Consider the value of $G(\Delta)$ on the set defined as
  \begin{align}
    \Theta_n(M)=\left\{\Delta:\,\,\Delta_{jk} = 0 \text{ for all}\,\, k > j,\,\,\,\, \left( \Delta + L \right)_{jj} > 0 \,\, \text{for all}\,\, j \,\,, \left\|\Delta\right\|_F=Mr_n\right\},
    \nonumber
  \end{align}
  where $M > 0$ and
  \begin{align}
    r_n=\sqrt{\frac{\left( \sum_{r=2}^p K_r + p \right) \log p}{n}}.
    \nonumber
  \end{align}
  The assumed scaling implies that $r_n\to 0$.
  We aim at showing that $\inf\left\{G(\Delta):\,\,\Delta\in \Theta_n(M)\right\}>0$. If it holds, then the convexity
  of $G\left( \Delta \right)$ and the fact that $G(\hat{\Delta})\leq G(\mathbf{0})=0$
  implies
  \begin{align}
    \|\hat{\Delta}\|_F = 
    \|\hat{L} - L\|_F\leq Mr_n.
    \nonumber
  \end{align}

  We start with analyzing the logarithm terms in \eqref{eq:Gdelta}. First let $f(t)=\log\det(L+t\Delta)$.
  Using a Taylor expansion of $f(t)$ at $t = 0$ with 
  $f'(t)=\operatorname{tr}[(L+t\Delta)^{-1}\Delta]$ and 
  $f''(t)=-{\operatorname{vec}\Delta}^T(L+t\Delta)^{-1}\otimes (L+t\Delta)^{-1}\operatorname{vec}\Delta$,
  we have
  \begin{align}
    &\log\det(L+\Delta)-\log\det(L)
    \nonumber\\
    =&\operatorname{tr}(L^{-1} \Delta) -
    (\operatorname{vec}\Delta)^T\left[\int_0^1(1-\nu)(L+\nu\Delta)^{-1}\otimes (L+\nu\Delta)^{-1}d\nu\right](\operatorname{vec}\Delta).
    \nonumber
  \end{align}
  The trace term in \eqref{eq:Gdelta} can be written as
  \begin{align}
    \tr\left( S\left( L+\Delta \right)^T \left( L+ \Delta \right) \right) - \tr\left( SL^TL \right) &= \tr\left( SL^T\Delta + S\Delta^TL + S\Delta^T\Delta \right) \nonumber\\
    & = 2\tr\left( SL^T\Delta \right) + \tr\left( S\Delta^T\Delta \right) \nonumber\\
    & \geq 2 \tr\left( SL^T\Delta \right),
    \nonumber
  \end{align}
  where the last inequality comes from the fact that the sample covariance matrix $S$ is positive semidefinite.
  Combining these with \eqref{eq:Gdelta} gives
  \begin{align}
    G(\Delta) \geq & 2 (\operatorname{vec}\Delta)^T \left[\int_0^1(1-\nu)(L+\nu\Delta)^{-1}\otimes (L+\nu\Delta)^{-1}d\nu\right](\operatorname{vec}\Delta) \nonumber\\
    & + 2\operatorname{tr}[(SL^T - L^{-1})\Delta] + \lambda\left(\left\|L + \Delta\right\|_{2,1}^\ast-\left\|L\right\|_{2,1}^\ast\right) \nonumber\\
    \equiv &(a)+(b)+(c) \label{eq:Gdeltanew}.
  \end{align}
  The integral term $(a)$ above has a positive lower bound.
  Recalling that \newline $\sigma_{\text{min}}(M)=\min_{\|x\|=1}x^TMx$ is a concave function of $M$ 
  (the minimum of linear functions of $M$ is concave), we have
  \begin{align}
    (a)&= 2 \|\operatorname{vec}\Delta\|^2 \frac{{\operatorname{vec}\Delta}^T}{\|\operatorname{vec}\Delta\|}\left[\int_0^1(1-\nu)(L+\nu\Delta)^{-1}\otimes (L+\nu\Delta)^{-1}d\nu\right]\frac{\operatorname{vec}\Delta}{\|\operatorname{vec}\Delta\|} \nonumber\\
    &\geq 2\|\Delta\|_F^2 \sigma_{\text{min}}\left[\int_0^1(1-\nu)(L+\nu\Delta)^{-1}\otimes (L+\nu\Delta)^{-1}d\nu\right] \nonumber\\
    &\geq 2\|\Delta\|_F^2 \left[\int_0^1(1-\nu)\sigma_{\text{min}}\left( (L+\nu\Delta)^{-1}\otimes (L+\nu\Delta)^{-1} \right)d\nu\right] \nonumber\\
    &\geq 2\|\Delta\|_F^2 \int_0^1(1-\nu)\sigma_{\text{min}}^2(L+\nu\Delta)^{-1}d\nu \nonumber\\
    &\geq \|\Delta\|_F^2\min_{0\leq\nu\leq1}\sigma_{\text{min}}^2(L+\nu\Delta)^{-1} \nonumber\\
    &\geq \|\Delta\|_F^2\min\left\{\sigma_{\text{min}}^2(L+\tilde{\Delta})^{-1}:\,\,\|\tilde{\Delta}\|_F\leq Mr_n\right\}.
    \label{eq:terma}
  \end{align}
  The second inequality uses Jenson's inequality of the concave function $\sigma_{\text{min}}(\cdot)$,
  and the third inequality uses the fact that $\sigma_{\min}\left( A \otimes A \right) = \sigma_{\min}\left( A \right)^2$ for any positive (semi)definite matrix $A$.
  Using triangle inequality on the matrix operator norm, we have
  \begin{align}
    \sigma_{\text{min}}^2(L+\tilde{\Delta})^{-1}=\sigma_{\text{max}}^{-2}(L+\tilde{\Delta})\geq \left(\vertiii{L}_2+\vertiii{\tilde{\Delta}}_2\right)^{-2}
    \geq\frac{1}{2\vertiii{L}_2^2}\geq \frac{\kappa^2}{2},
    \nonumber
  \end{align}
  where the second inequality holds with high probability since
  $\vertiii{\tilde{\Delta}}_2 \leq \|\tilde{\Delta}\|_F \le Mr_n \le \vertiii{L}_2$ as $r_n \rightarrow 0$ and the last inequality follows from Assumption \ref{abddsval}.
  This gives the lower bound for the first term in (\ref{eq:Gdeltanew}):
  \begin{align}
    (a) \geq \frac{1}{2}\kappa^2\|\Delta\|_F^2=\frac{1}{2}\kappa^2 M^2r_n^2.
    \label{eq:boundona}
  \end{align}
  To deal with $(b)$, we start by recalling some notation. We let $\mathcal S = \left\{ \left( r,j \right): L_{rj} \neq 0 \right\}$
  denote the support of $L$,
  and $s = \sum_{r= 2}^p K_r$ be the number of non-zero off-diagonal elements. We also define
  \begin{align}
    \norm{L}_{2,1} = \sum_{r=2}^p \sum_{\ell = 1}^{r-1} w_{\ell\ell} |L_{r\ell}|
    = \sum_{r=2}^p \sum_{\ell = 1}^{r-1} |L_{r\ell}|,
    \nonumber
  \end{align}
  where the last equality holds since $w_{\ell\ell} = 1$ by \eqref{est:generalweight}.
  Then, by the Cauchy-Schwarz inequality,
  \begin{align}
    \left|\operatorname{tr}[(SL^T - L^{-1})\Delta]\right| 
    &= \left| \sum_{r=1}^p \sum_{j = 1}^r \left( SL^T - L^{-1} \right)_{rj} \Delta _{rj}\right|
    \nonumber\\
    &\leq \left|\sum_{r = 1}^p \sum_{j \in \mathcal{I}_r}(SL^T - L^{-1})_{rj} \Delta_{rj}\right| +
    \left|\sum_{r= 1}^p \sum_{j \notin \mathcal{I}_r} (SL^T - L^{-1})_{rj} \Delta_{rj}\right| 
    \nonumber \\
    &\leq \sqrt{s + p} \norm{SL^T - L^{-1}}_\infty \norm{\Delta_{\mathcal S}}_F + \norm{SL^T - L^{-1}}_\infty \norm{\Delta_{\mathcal S^c}}_{2,1}
    \nonumber \\
    &\leq C_1 \sqrt{s + p} \sqrt{\frac{\log p}{n}} \norm{\Delta_{\mathcal S}}_F + C_1 \sqrt{\frac{\log p}{n}} \norm{\Delta_{\mathcal S^c}}_{2,1},
    \label{eq:boundonb}
  \end{align}
  where the last inequality comes from Lemma \ref{lem:concentration} with probability tending to 1.
  To bound the penalty terms, we note that 
  \begin{align}
    &\norm{L + \Delta}_{2,1}^\ast - \norm{L}_{2,1}^\ast
    \nonumber\\
    =& \sum_{r= 2}^p \sum_{\ell = 1}^{r-1}\sqrt{\sum_{m = 1}^\ell w_{\ell m}^2 (L_{rm} + \Delta_{rm})^2} - \norm{L_{\mathcal S}}_{2,1}^\ast
    \nonumber\\
    =& \sum_{r=2}^p\sum_{\ell = 1}^{r-1} \sqrt{\sum_{m: (r,m) \in \mathcal S} w_{\ell m}^2 (L_{rm} + \Delta_{rm})^2 + \sum_{m: (r,m) \notin \mathcal S}w_{\ell m}^2 (L_{rm} + \Delta_{rm})^2}
    - \norm{L_{\mathcal S}}_{2,1}^\ast
    \nonumber\\
    \geq& \sum_{r=2}^p \sum_{\ell = 1}^{r-1} \sqrt{\sum_{m:(r,m)\in \mathcal S} w_{\ell m}^2 (L_{rm} + \Delta_{rm})^2} + \sum_{r=2}^p \sum_{\ell: (r,\ell) \notin \mathcal S} |L_{r\ell} + \Delta_{r\ell}| - \norm{L_{\mathcal S}}_{2,1}^\ast
    \nonumber\\
    =& \norm{L_{\mathcal S} + \Delta_{\mathcal S}}_{2,1}^\ast + \norm{L_{\mathcal S^c} + \Delta_{\mathcal S^c}}_{2,1} - \norm{L_{\mathcal S}}_{2,1}^\ast
    \nonumber\\
    =& \norm{L_{\mathcal S} + \Delta_{\mathcal S}}_{2,1}^\ast + \norm{\Delta_{\mathcal S^c}}_{2,1} - \norm{L_{\mathcal S}}_{2,1}^\ast
    \nonumber\\
    \geq& \norm{\Delta_{\mathcal S^c}}_{2,1} - \norm{\Delta_{\mathcal S}}_{2,1}^\ast,
    \nonumber
  \end{align}
  where the last inequality comes from triangle inequality.
  To give an upper bound on $\norm{L_{\mathcal S}}_{2,1}^\ast$, we observe that
  $2\lambda b \leq a\lambda^2+b^2/a$ holds for any $a>0$, and obtain
  \begin{align}
    2\lambda \left\|\Delta_{\mathcal S}\right\|_{2,1}^\ast&=\sum_{r = 2}^p 2\lambda \sum_{\ell=J_r+1}^{r-1}\sqrt{\sum_{m=J_r+1}^\ell w_{\ell m}^2\Delta_{rm}^2} \nonumber \\
    &\leq \left( \sum_{r=2}^p K_r \right) \lambda^2 a +  \sum_{r=2}^p \sum_{\ell=J_r+1}^{r-1}\sum_{m=J_r+1}^\ell w_{\ell m}^2\Delta_{rm}^2/a \nonumber\\
    &= \left( \sum_{r=2}^p K_r \right) \lambda^2 a + \sum_{r=2}^p \sum_{m=J_r+1}^{r-1}\left(\sum_{\ell=m}^{r-1} w_{\ell m}^2\right) \Delta_{rm}^2/a.
    \nonumber
  \end{align}
  Now let 
  \begin{align}
    a &=\frac{4}{\kappa^2} \max_r\max_{J_r+1 \leq m \leq r-1}\sum_{\ell = m}^{r-1}w_{\ell m}^2
    \nonumber\\
    &= \frac{4}{\kappa^2} \max_r\max_{J_r+1\leq m\leq r-1}\sum_{\ell = m}^{r-1} \frac{1}{\left( \ell - m + 1 \right)^4}
    \leq \sum_{k = 1}^\infty \frac{4}{k^4 \kappa^2} \leq \frac{C_2}{\kappa^2},
    \nonumber
  \end{align} 
  for some constant $C_2 >0$, it follows that
  \begin{align}
    \lambda \left\|\Delta_{\mathcal S}\right\|_{2,1}^\ast& \leq \frac{C_2}{\kappa^2} s \lambda^2 + \norm{\Delta_{\mathcal S}}_F^2 \frac{\kappa^2}{4}
    \leq \frac{C_2}{\kappa^2} s \lambda^2 + \norm{\Delta}_F^2 \frac{\kappa^2}{4}.
    \nonumber
  \end{align}
  Therefore,
  \begin{align}
    \lambda\left( \norm{L + \Delta}_{2,1}^\ast - \norm{L}_{2,1}^\ast \right)
    \geq \lambda \norm{\Delta_{\mathcal{S}^c}}_{2,1} - \frac{C_2}{\kappa^2}s\lambda^2 - \frac{\kappa^2}{4}\norm{\Delta}_F^2.
    \label{eq:boundonc}
  \end{align}
  Finally, combining \eqref{eq:boundona}, \eqref{eq:boundonb}, and \eqref{eq:boundonc}, we have
  \begin{align}
    G(\Delta)&\geq \frac{\kappa^2}{4} \left\|\Delta\right\|_F^2 - C_1\sqrt{\frac{(s+p)\log p}{n}}\left\|\Delta\right\|_F +
    \left( \lambda - C_1 \sqrt{\frac{\log p}{n}}\right) \norm{\Delta_{\mathcal S^c}}_{2,1} -  \frac{C_2}{\kappa^2}s \lambda^2.
    \nonumber
  \end{align}
  For any $\varepsilon<1$, choose
  \begin{align}
    \lambda= \frac{C_1}{\varepsilon}\sqrt{\frac{\log p}{n}}.
    \nonumber
  \end{align}
  Since $\norm{\Delta}_F = Mr_n$, we have
  \begin{align}
    G(\Delta) \geq & \frac{\kappa^2}{4} M^2r_n^2 -C_1Mr_n^2 
    + C_1\sqrt{\frac{\log p}{n}}\left(\frac{1}{\varepsilon}-1\right)\left\|\Delta_{\mathcal S^c}\right\|_{2,1} 
    - \frac{C_2C_1^2}{\kappa^2\varepsilon^2}\frac{s\log p}{n} \nonumber \\
    \geq &\left(\frac{\kappa^2}{4}M^2 -C_1M-\frac{C_2C_1^2}{\kappa^2\varepsilon^2}\right)r_n^2 >0,
    \nonumber
  \end{align}
  for $M$ sufficiently large.
\end{proof}

\section{Proof of Lemma \ref{lem:theory:opt_cond}} \label{proof:opt_cond}
\begin{proof}
  Denote 
  \begin{align}
    &\mathcal L \left( \tau, z, \beta; \nu, \phi,  a^{(\ell)} \right)
    \nonumber\\
    =&
    -2 \log \tau + \frac{1}{n} \left\| z \right\|_2^2 + 
    \nu \left( \tau - \beta_r \right)  \nonumber 
    + \frac{1}{n} \left\langle \phi, z - \mathbf{X}_{1:r}\beta \right\rangle +
    \lambda \sum_{\ell=1}^{r-1} 
    \left\langle W^{(\ell)} \ast a^{(\ell)}, \beta \right\rangle. \nonumber
  \end{align}

  Then the primal \eqref{est:subproblem}
  can be written equivalently as
  \begin{align}
    \min_{\tau,z,\beta}  \left\{ \max_{\nu, \phi, a^{(\ell)}} \left\{
      \mathcal L\left(\tau, z, \beta; \nu, \phi, a^{(\ell)}\right):
      \left\| \left( a^{(\ell)} \right)_{g_{r,\ell}}\right\|_2 \leq 1,
      \left( a^{(\ell)} \right)_{g_{r,\ell}^c} = 0
    \right\} \right\}. \nonumber
  \end{align}

  The dual function can then be written as
  \begin{align}
    g\left( \nu, \phi, a^{(\ell)} \right) =& \inf_{\tau,z,\beta}
    \mathcal L\left(\tau, z, \beta; \nu, \phi, a^{(\ell)} \right) \nonumber\\
    =& \inf_{\tau} \left\{ -2 \log \tau + \nu \tau \right\}
    + \inf_{z} \left\{ \frac{1}{n} \left\| z \right\|_2^2 + 
    \frac{1}{n} \left\langle \phi, z \right\rangle  \right\} \nonumber\\
    &+ \inf_{\beta} \left\{ - \nu \beta_r 
      -\frac{1}{n} \left\langle \mathbf{X}_{1:r}^T \phi, \beta \right\rangle + 
      \lambda \sum_{\ell=1}^{r-1}
      \left\langle W^{(\ell)}\ast a^{(\ell)}, \beta \right\rangle\right\} 
      \nonumber \\
      =& 2 \log \nu - 2\log 2 +
      2 -\mathbb{1}_{\infty} \left\{ \nu > 0 \right\}
      -\frac{1}{4n} \left\| \phi \right\|_2^2
      \nonumber\\
      &-\mathbb{1}_{\infty} \left\{ -\nu \mathbf{e}_r
      - \frac{1}{n}\mathbf{X}_{1:r}^T \phi  
      + \lambda  \sum_{\ell=1}^{r-1} 
      W^{(\ell)}\ast a^{(\ell)} = 0 \right\},
      \nonumber
    \end{align}
    where $\mathbf{e}_r \in \real^r$ is such that $(\mathbf{e}_r)_r = 1$ and 
    $(\mathbf{e}_r)_j = 0$ for all $j \neq r$.
    Thus the dual problem (up to a constant) is 
    \begin{align}
      \max_{\nu, \phi, a^{(\ell)}}& g\left( \nu, \phi, a^{(\ell)} \right) 
      \nonumber \\
      = \min_{\nu, \phi, a^{(\ell)}}& \left\{ -2 \log \nu +
        \frac{1}{4n} \left\| \phi \right\|_2^2 \st \nu >0, \quad
        \left\| \left( a^{(\ell)} \right)_{g_{r,\ell}}\right\|_2 \leq 1,
        \left( a^{(\ell)} \right)_{g_{r,\ell}^c} = 0,  \right. \nonumber \\
        &\left. 
        \nu \mathbf{e}_r + \frac{1}{n} \mathbf{X}_{1:r}^T \phi = \lambda 
        \sum_{\ell=1}^{r-1} W^{(\ell)} \ast a^{(\ell)} \right\}.
        &&
        \nonumber
      \end{align}
      The primal-dual relation is 
      \begin{align}
        \hat{\beta}_{r} = \hat{\tau} = \frac{2}{\hat{\nu}} \quad \quad
        \hat{\phi} = -2 \hat{z} = -2 \mathbf{X}_{1:r}\hat{\beta}. \nonumber
      \end{align} 

      This implies that at optimal points
      \begin{align}
        -\frac{2}{\hat{\beta}_r} \mathbf{e}_r + 2 S_{1:r,1:r} \hat{\beta}  + 
        \lambda  \sum_{\ell=1}^{r-1} W^{(\ell)} \ast 
        \hat{a}^{(\ell)} = 0,\nonumber
      \end{align}
      with $  \left\| \left( \hat{a}^{(\ell)} \right)_{g_{r,\ell}}\right\|_2 \leq 1,
      \left( \hat{a}^{(\ell)} \right)_{g_{r,\ell}^c} = 0$.

      If we denote the objective function as 
      \begin{align}
        f\left( \beta \right) = -2 \log \beta_r + \left\langle S_{1:r,1:r}, 
        \beta \beta^T
        \right\rangle + 
        \lambda P(\beta), \nonumber
      \end{align}
      then from the equality $f( \hat{\beta} ) 
      = \mathcal L \left( \hat{\tau}, \hat{z},
      \hat{\beta}; \hat{\nu}, \hat{\phi}, \hat{a}^{(\ell)} \right)$
      together with the primal-dual relation, we have
      \begin{align}
        P(\hat{\beta}) =  \sum_{\ell=1}^{r-1}
        \left\langle W^{(\ell)} \ast \hat{a}^{(\ell)}, \hat{\beta} \right\rangle
        =   \sum_{\ell=1}^{r-1}
        \left\langle W^{(\ell)} \ast \hat{\beta}, \hat{a}^{(\ell)}\right\rangle.
        \nonumber
      \end{align}
      Suppose there exists some $\ell$ 
      with $\hat{\beta}_{g_{r,\ell}} \neq 0$
      but $\left( \hat{a}^{(\ell)} \right)_{g_{r,\ell}} \neq 
      \frac{\left( W^{(\ell)}\ast \hat{\beta} \right)_{g_{r,\ell}}}
      {\left\| \left( W^{(\ell)}\ast \hat{\beta} \right)_{g_{r,\ell}} \right\|_2}$,
      \newline then $\left\langle W^{(\ell)} \ast \hat{\beta}, 
      \hat{a}^{(\ell)} \right\rangle < 
      \left\| \left( W^{(\ell)}\ast \hat{\beta} \right)_{g_{r,\ell}} \right\|_2$
      while for other $\ell'$ by Cauchy-Schwarz inequality
      we have $\left\langle W^{(\ell')} \ast \hat{\beta}, 
      \hat{a}^{(\ell')} \right\rangle 
      \leq \left\| \left( W^{(\ell')}\ast \hat{\beta} 
      \right)_{g_{r, \ell'}} \right\|_2$.
      Therefore, summing over all $\ell = 1,\dots,r-1$ would give 
      \begin{align}
        P(\hat{\beta}) =  \sum_{\ell=1}^{r-1}
        \left\| \left( W^{(\ell)} \ast \hat{\beta} \right)_{g_{r,\ell}} \right\|_2 
        >  \sum_{r=2}^p \sum_{\ell=1}^{r-1}
        \left\langle W^{(\ell)} \ast \hat{\beta}, \hat{a}^{(\ell)} \right\rangle,
        \nonumber
      \end{align}
      which leads to a contradiction. Thus
      $\left( \hat{a}^{(\ell)} \right)_{g_{r,\ell}} = \frac{\left( W^{(\ell)} \ast 
      \hat \beta \right)_{g_{r,\ell}}}
      {\left\| \left( W^{(\ell)}\ast \hat \beta  \right)_{g_{r,\ell}} \right\|_2}$ for
      $\hat \beta _{g_{r,\ell}} \neq 0 $ and 
      $\left\| \hat{a}^{(\ell)}_{g_{r,\ell}} \right\|_2 \leq 1$ 
      for $\hat{\beta}_{g_{r,\ell}} = 0$.
    \end{proof}

    \section{Proof of Lemma \ref{lem:theory:moresparse}} \label{proof:moresparse}
    \begin{proof}
      In this proof, we continue to use the notation in Appendix \ref{proof:opt_cond}. Observe that 
      $\mathcal L \left( \tau, z, \beta; \nu, \phi, a^{(\ell)} \right)$
      is jointly convex in $\tau$, $z$ and $\beta$, 
      and it is strictly convex in $\tau$ 
      and $z$. Thus, the minimizers $\hat{z}$ and $\hat{\tau}$ are unique. 

      To see this in a more general setting, without loss of generality, suppose
      $f(x,y)$ is convex in $y$ and is strictly convex in $x$. Then for
      $x_1 \neq x_2$ and $\theta \in \left( 0,1 \right)$ we have
      \[
        f\left( \theta x_1 + \left( 1-\theta \right) x_2, y \right) <
        \theta f\left( x_1,y \right) + \left( 1-\theta \right) f\left( x_2,y \right)
      \]
      Now suppose $\left( \hat{x}_1, \hat{y} \right)$ and 
      $\left( \hat{x}_2, \hat{y}_2 \right)$ are both minima of $f$, then taking
      $\theta = 1/2$ we have $f\left( \frac{\hat{x}_1+\hat{x}_2}{2},\hat{y} \right)
      < f\left( \hat{x}_1, \hat{y} \right) = f\left( \hat{x}_2, \hat{y} \right)$, 
      which leads to a contradiction.

      By the 
      primal-dual relation, we know that if $\hat{\beta}$ and $\tilde{\beta}$ are two
      solutions to \eqref{est:subproblem},
      then $\hat{\beta}_{r} = \tilde{\beta}_{r}$ and 
      $\mathbf{X}_{1:r} \hat{\beta} = \mathbf{X}_{1:r}\tilde{\beta}$.
      So from the equality $f(\hat{\beta}) = f(\tilde{\beta})$ we know that 
      $P(\tilde{\beta}) = 
      P(\hat{\beta})$. Also
      by
      \begin{align}
        f\left( \hat{\beta} \right) = \mathcal L 
        \left( \hat{\tau}, \hat{z}, \hat{\beta};
        \hat{\nu}, \hat{\phi}, \hat{a}^{(\ell)} \right) 
        \leq \mathcal L \left( \hat{\tau}, \hat{z}, \tilde{\beta};
        \hat{\nu}, \hat{\phi}, \hat{a}^{(\ell)} \right) 
        \leq \mathcal L \left( \tilde{\tau}, \tilde{z}, \tilde{\beta};
        \tilde{\nu}, \tilde{\phi}, \tilde{a}^{(\ell)} \right) 
        = f\left( \tilde{\beta} \right), \nonumber
      \end{align}
      we have 
      \begin{align}
        \mathcal L \left( \hat{\tau}, \hat{z}, \hat{\beta};
        \hat{\nu}, \hat{\phi}, \hat{a}^{(\ell)} \right) 
        = \mathcal L \left( \hat{\tau}, \hat{z}, \tilde{\beta};
        \hat{\nu}, \hat{\phi}, \hat{a}^{(\ell)} \right) ,
        \nonumber
      \end{align}
      and thus
      \begin{align}
        \sum_{\ell=1}^{r-1} 
        \left\langle W^{(\ell)}\ast \hat{a}^{(\ell)}
        , \tilde{\beta} \right\rangle = 
        \sum_{\ell=1}^{r-1} 
        \left\langle W^{(\ell)}\ast \hat{a}^{(\ell)}
        , \hat{\beta} \right\rangle 
        = P(\hat{\beta}) 
        = P(\tilde{\beta})
        =  \sum_{\ell=1}^{r-1} 
        \left\| \left( W^{(\ell)}\ast \tilde{\beta} \right)_{g_{r,\ell}} \right\|_2.
        \nonumber
      \end{align}

      Now for any $\ell \leq r-1$ suppose 
      $\left\| \left( \hat{a}^{(\ell)} \right)_{g_{r,\ell}} \right\|_2 < 1$,
      then for the equality above to hold, we must have 
      $\tilde{\beta}_{g_{r,\ell}} = 0$.
      Therefore, by Lemma \ref{lem:theory:opt_cond},
      $\hat{\beta}_{g_{r,\ell}} = 0 \implies \tilde{\beta}_{g_{r,\ell}} = 0 $,
      so any other solutions to \eqref{est:subproblem} cannot be
      less sparse than $\hat{\beta}$.
    \end{proof}

    \section{Proof of Lemma \ref{lem:theory:uniqueness}} \label{proof:uniqueness}
    \begin{proof}
      By Lemma \ref{lem:theory:moresparse}, any other solution $\beta$ to 
      \eqref{est:subproblem} must have 
      $\beta_{g_{J(\hat{\beta})}} = 0$.
      Recall that $J( \hat \beta ) = r - 1 - K( \hat \beta )$.
      The original problem \eqref{est:subproblem} can thus be written equivalently as 
      \begin{align}
        \min_{\gamma \in \real^{K(\hat\beta)+1}} 
        -2 \log \gamma_{K(\hat \beta)+1} + \frac{1}{n} 
        \norm{\mathbf{X}_{\hat{\mathcal{S}}}\gamma}_2^2 + \lambda
        \sum_{\ell = 1}^{K(\hat{\beta})}
        \norm{\left( \hat{W}^{(\ell)} \ast \gamma \right)_{g_{r,\ell}}}_2,
        \nonumber
      \end{align}
      where $\hat{W}^{(\ell)} = \left( W^{(\ell+\hat{J})} \right)_{\hat{S}}$.

      Note that the penalty term is a convex function of $\gamma$. 
      The Hessian matrix of the first term is a diagonal matrix of 
      dimension $|\hat{\mathcal S}|= K( \hat\beta )+1$ 
      with non-negative entries in the diagonal. 
      The Hessian matrix of the second term is 
      $2 S_{\hat{\mathcal{S}}\hat{\mathcal{S}}}$. 
      Then by Assumption \ref{agaussian}, the uniqueness follows from strict convexity. 
    \end{proof}

    \section{Proof of Lemma \ref{lem:boundingvarA}} 
    \label{proof:boundingvarA}
    \begin{proof}
      Recall that 
      \[
        M_n = \frac{1}{n} 
        \left( \sum_{\ell=1}^{r-1} W^{(\ell)}\ast \tilde{a}^{(\ell)} 
        \right)^T_{\mathcal{I}} \left( \frac{1}{n} \mathbf{X}_{\mathcal{I}}^T
        \mathbf{X}_{\mathcal{I}}\right)^{-1} 
        \left(\sum_{\ell=1}^{r-1} W^{(\ell)}\ast \tilde{a}^{(\ell)} 
        \right)_{\mathcal{I}} + \frac{4}{n^2 \lambda^2} \tilde{\beta}^2_r
        \norm{\mathbf{O}_{\mathcal{I}} E_r}^2_2.
      \]
      We cite Lemma $9$ (specifically in the form (60)) in 
      \cite{wainwright2009sharp} here for completeness.
      \begin{lemma}[\citealt{wainwright2009sharp}]
        \label{lem:mineigenvalue}
        For $k \leq n$, let $\mathbf{X}_{\mathcal{I}} \in \real^{n\times k}$ have i.i.d. rows 
        from a multivariate Gaussian distribution with mean $\mathbf{0}$ and covariance matrix $\Sigma$. If
        $\Sigma$ has minimum eigenvalue $\kappa >0$, then
        \[
          \Prob\left[ \vertiii{\left( \frac{1}{n} 
            X_{\mathcal{I}}^T X_{\mathcal{I}} \right)^{-1}}_2 \geq \frac{9}{\kappa}
          \right] \leq 2 \exp \left( -\frac{n}{2} \right).
        \]
      \end{lemma}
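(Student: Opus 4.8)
The plan is to recognize this as the standard smallest-singular-value bound for a Gaussian design (it is quoted here as Lemma 9, form (60), of \cite{wainwright2009sharp}) and to reduce it to a concentration statement for a whitened Gaussian matrix. First I would record the elementary identity that for a symmetric positive definite matrix $M$ one has $\vertiii{M^{-1}}_2 = 1/\sigma_{\min}(M)$, so that the event of interest is exactly $\{\sigma_{\min}(\frac{1}{n}\mathbf{X}_{\mathcal{I}}^T\mathbf{X}_{\mathcal{I}}) \leq \kappa/9\}$, where here $\kappa = \sigma_{\min}(\Sigma)$. This converts the operator-norm-of-inverse statement into a lower bound on the smallest eigenvalue of the Gram matrix.

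Next I would whiten the design. Writing $\mathbf{X}_{\mathcal{I}} = W\Sigma^{1/2}$ with $W \in \real^{n\times k}$ having i.i.d.\ $N(0,1)$ entries (so each row $w_i\Sigma^{1/2}$ has covariance $\Sigma$, matching Assumption \ref{agaussian}), a short variational argument gives the desired factorization of the smallest eigenvalue. Substituting $u = \Sigma^{1/2}v$ and using $\norm{u}_2 \geq \sqrt{\sigma_{\min}(\Sigma)}\,\norm{v}_2$ yields $\sigma_{\min}(\frac{1}{n}\mathbf{X}_{\mathcal{I}}^T\mathbf{X}_{\mathcal{I}}) \geq \sigma_{\min}(\Sigma)\,\sigma_{\min}(\frac{1}{n}W^TW) = \kappa\,\sigma_{\min}(\frac{1}{n}W^TW)$. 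It therefore suffices to prove $\Prob[\sigma_{\min}(\frac{1}{n}W^TW) \leq 1/9] \leq 2\exp(-n/2)$, i.e.\ that the smallest singular value satisfies $s_{\min}(W/\sqrt{n}) > 1/3$ with the stated probability.

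The core step is a non-asymptotic bound on $s_{\min}(W)$ built from two standard facts about Gaussian matrices. The map $W \mapsto s_{\min}(W)$ is $1$-Lipschitz with respect to the Frobenius norm (singular values are $1$-Lipschitz in operator norm, and $\vertiii{\cdot}_2 \leq \norm{\cdot}_F$), so Gaussian concentration of measure gives $\Prob[\,|s_{\min}(W) - \mathrm{E}[s_{\min}(W)]| \geq t\,] \leq 2\exp(-t^2/2)$; and Gordon's comparison inequality supplies the mean bound $\mathrm{E}[s_{\min}(W)] \geq \sqrt{n} - \sqrt{k}$ for $k \leq n$. Combining these yields $s_{\min}(W) \geq \sqrt{n} - \sqrt{k} - t$ with probability at least $1 - 2\exp(-t^2/2)$, hence $s_{\min}(W/\sqrt{n}) \geq 1 - \sqrt{k/n} - t/\sqrt{n}$. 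Choosing $t$ proportional to $\sqrt{n}$ and invoking the regime $k/n$ bounded away from $1$ (guaranteed in our setting by $K = o(n)$, so $\sqrt{k/n} \to 0$) keeps the right-hand side above $1/3$ for $n$ large, producing the exponential tail.

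The main obstacle is matching the precise constants, namely the factor $9$ and the exponent $n/2$: the naive choice $t = \sqrt{n}$ drives the mean bound $\sqrt{n}-\sqrt{k}$ below zero and renders the estimate vacuous, so one must track the interplay between the deviation parameter $t$ and the aspect ratio $\sqrt{k/n}$ carefully, relying on the fact that $\sqrt{k/n}$ is small to leave enough room for the fluctuation term. Since the statement is taken verbatim from \cite{wainwright2009sharp}, I would ultimately defer the sharp constant bookkeeping to that reference, having isolated the whitening reduction and the two concentration ingredients (Lipschitz concentration and Gordon's mean bound) that drive the argument.
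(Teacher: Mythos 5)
The paper does not prove this lemma at all --- it is quoted verbatim as Lemma 9, form (60), of Wainwright (2009) --- so there is no in-paper argument to compare against; your reconstruction (reduce $\vertiii{(\tfrac1n \mathbf{X}_{\mathcal I}^T\mathbf{X}_{\mathcal I})^{-1}}_2$ to $\sigma_{\min}$ of the Gram matrix, whiten via $\mathbf{X}_{\mathcal I}=W\Sigma^{1/2}$, then apply Gordon's mean bound and Lipschitz concentration for $s_{\min}(W)$) is exactly the standard route, and each individual step you state is correct.

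The genuine gap is the one you half-acknowledge: the constants $9$ and $n/2$ do not come out of this computation, and more importantly the statement as written cannot be salvaged under the stated hypothesis ``$k\leq n$'' alone. The concentration bound gives $s_{\min}(W/\sqrt n)\geq 1-\sqrt{k/n}-t/\sqrt n$ with failure probability $2\exp(-t^2/2)$, so to keep the right-hand side above $1/3$ you must have $\sqrt{k/n}<2/3$, and even in the best case $k/n\to 0$ the largest admissible deviation is $t\approx \tfrac23\sqrt n$, yielding an exponent of at most $2n/9$, not $n/2$; when $k$ is close to $n$ the smallest eigenvalue of $\tfrac1n W^TW$ degenerates and the claimed bound is simply false. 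So ``deferring the sharp constant bookkeeping to the reference'' does not close the argument: you should either (i) state explicitly the quantitative condition on $k/n$ under which your choice of $t$ works and accept a weaker exponent such as $2\exp(-cn)$ for an explicit $c<1/2$ (which is all the paper actually needs, since it is applied with $k=K_r=o(n)$ and the exponent is absorbed into unnamed constants elsewhere), or (ii) replace the Lipschitz-concentration step by a sharper tail bound (e.g.\ the exact $\chi^2$ lower tail in the small-$k$ regime) if the exponent $n/2$ is to be reproduced literally. As it stands, the proposal proves a correct theorem of the same shape but not the lemma with the constants displayed in the statement.
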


      By the lemma above, Assumption \ref{abddsval}, and 
      \eqref{eq:boundingdualvariable}
      \begin{align}
        \frac{1}{n}
        \left( \sum_{\ell=1}^{r-1} W^{(\ell)} \ast 
        \tilde{a}^{(\ell)}\right)_{\mathcal I} ^T
        \left(\frac{1}{n} \mathbf{X}_{\mathcal I}^T 
        \mathbf{X}_{\mathcal I}\right)^{-1} 
        \left( \sum_{\ell=1}^{r-1} W^{(\ell)} \ast 
        \tilde{a}^{(\ell)}\right)_{\mathcal I} 
        &\leq \frac{9 \kappa^2}{n}  
        \norm{ \left( \sum_{\ell=1}^{r-1} W^{(\ell)} 
        \ast\tilde{a}^{(\ell)}\right)_{\mathcal I}}^2
        \nonumber\\
        &\leq \frac{3 \pi^2 \kappa^2}{2} \frac{K}{n},
        \nonumber
      \end{align}
      with probability greater than $1-2\exp\left( -\frac{n}{2} \right)$.

      Next we deal with the second term in $M_n$. Recall from \eqref{eq:betar} that
      \begin{align}
        \frac{4}{n^2 \lambda^2} \tilde{\beta}^2_r
        \norm{\mathbf{O}_{\mathcal{I}} E_r}^2_2 & =
        \frac{4}{n^2} \left( \frac{\frac{1}{2} 
        \mathbf{X}_r^T \mathbf{C}_{\mathcal{I}} + \sqrt{\frac{1}{4} 
        \left( \mathbf{X}^T_r \mathbf{C}_{\mathcal{I}} \right)^2 +
        \frac{4}{\lambda^2 n}\norm{\mathbf{O}_{\mathcal{I}}E_r}^2_2}}
        {\frac{2}{n} \norm{\mathbf{O}_{\mathcal{I}}E_r}^2_2} \right)^2
        \norm{\mathbf{O}_{\mathcal{I}}E_r}^2_2
        \nonumber\\
        &\leq \frac{4}{n^2} \frac{\frac{1}{4}\left( \mathbf{X}^T_r 
        \mathbf{C}_{\mathcal{I}} \right)^2 + \frac{4}{\lambda^2 n}
        \norm{\mathbf{O}_{\mathcal{I}}E_r}^2_2}{\frac{1}{n^2}
        \norm{\mathbf{O}_{\mathcal{I}}E_r}^4_2} \norm{\mathbf{O}_{\mathcal{I}}
      E_r}^2_2
      \nonumber\\
      &=  \frac{\left( \mathbf{X}^T_r 
      \mathbf{C}_{\mathcal{I}} \right)^2 }
      {\norm{\mathbf{O}_{\mathcal{I}}E_r}^2_2} + \frac{16}{\lambda^2 n}.
      \nonumber
    \end{align}

    The next lemma gives us a handle on the numerator of the first term.
    \begin{lemma}
      Using the general weight \eqref{est:generalweight}, 
      we have
      \begin{align}
        \Prob \left[ 
          \left| \mathbf{X}_r^T \mathbf{C}_{\mathcal{I}}\right| \geq 
        1 \right] \leq  
        2\exp\left( -\frac{n\alpha^2}{3 \theta \kappa^2 \pi^2 K} \right)
        + 2 \exp \left( -\frac{n}{2} \right).
        \nonumber
      \end{align}
      \label{lem:boundingXrCI} 
    \end{lemma}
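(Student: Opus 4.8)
The plan is to condition on $\mathbf{X}_{\mathcal{I}}$ and split $\mathbf{X}_r^T\mathbf{C}_{\mathcal{I}}$ into a deterministic piece that the irrepresentable condition controls and a mean-zero Gaussian piece that a tail bound controls. First I would invoke the decomposition \eqref{eq:decomposition1}, writing $\mathbf{X}_r^T = \Sigma_{r\mathcal{I}}(\Sigma_{\mathcal{I}\mathcal{I}})^{-1}\mathbf{X}_{\mathcal{I}}^T + E_r^T$ with $E_r \sim N(\mathbf{0}_n,\theta_r^{(r)}\mathbf{I}_{n\times n})$, and use the projection identity $\mathbf{X}_{\mathcal{I}}^T\mathbf{C}_{\mathcal{I}} = \left(\sum_{\ell=1}^{r-1}W^{(\ell)}\ast\tilde{a}^{(\ell)}\right)_{\mathcal{I}}$, which collapses the first term so that
\[
\mathbf{X}_r^T\mathbf{C}_{\mathcal{I}} = \Sigma_{r\mathcal{I}}(\Sigma_{\mathcal{I}\mathcal{I}})^{-1}\left(\sum_{\ell=1}^{r-1}W^{(\ell)}\ast\tilde{a}^{(\ell)}\right)_{\mathcal{I}} + E_r^T\mathbf{C}_{\mathcal{I}}.
\]

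Next I would bound the first (deterministic) summand by H\"older's inequality, combining Assumption \ref{airrepre} applied at $\ell = r \in \mathcal{I}^c$, namely $\norm{\Sigma_{r\mathcal{I}}(\Sigma_{\mathcal{I}\mathcal{I}})^{-1}}_1 \leq \frac{6}{\pi^2}(1-\alpha)$, with the dual-feasibility bound \eqref{eq:boundingdualvariable}, $\norm{\left(\sum_{\ell}W^{(\ell)}\ast\tilde{a}^{(\ell)}\right)_{\mathcal{I}}}_\infty \leq \frac{\pi^2}{6}$. The two constants cancel, leaving a bound of exactly $1-\alpha$. Consequently $\{|\mathbf{X}_r^T\mathbf{C}_{\mathcal{I}}| \geq 1\} \subseteq \{|E_r^T\mathbf{C}_{\mathcal{I}}| \geq \alpha\}$, so it suffices to control the random summand.

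For that, conditioning on $\mathbf{X}_{\mathcal{I}}$ I would treat $E_r^T\mathbf{C}_{\mathcal{I}}$ as mean-zero Gaussian with variance $\theta_r^{(r)}\norm{\mathbf{C}_{\mathcal{I}}}_2^2$, and bound the norm via $\norm{\mathbf{C}_{\mathcal{I}}}_2^2 = \frac{1}{n}\left(\sum_{\ell}W^{(\ell)}\ast\tilde{a}^{(\ell)}\right)_{\mathcal{I}}^T\left(\frac{1}{n}\mathbf{X}_{\mathcal{I}}^T\mathbf{X}_{\mathcal{I}}\right)^{-1}\left(\sum_{\ell}W^{(\ell)}\ast\tilde{a}^{(\ell)}\right)_{\mathcal{I}}$. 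Applying Lemma \ref{lem:mineigenvalue} (so that $\vertiii{(\frac{1}{n}\mathbf{X}_{\mathcal{I}}^T\mathbf{X}_{\mathcal{I}})^{-1}}_2 \leq 9\kappa^2$ off an event of probability at most $2\exp(-n/2)$) together with the norm estimate $\norm{\left(\sum_{\ell}W^{(\ell)}\ast\tilde{a}^{(\ell)}\right)_{\mathcal{I}}}_2^2 \leq \frac{\pi^2}{6}K$ already carried out inside the proof of Lemma \ref{lem:boundingvarA} gives a conditional variance of at most $\frac{3\kappa^2\pi^2 K}{2n}\theta_r^{(r)}$ with high probability. I would then feed this into Lemma \ref{lem:probargu} with $a = \alpha$ and $\bar{Q} = \frac{3\kappa^2\pi^2 K}{2n}\theta_r^{(r)}$, yielding $\Prob[|E_r^T\mathbf{C}_{\mathcal{I}}|\geq\alpha] \leq 2\exp(-\frac{n\alpha^2}{3\theta_r^{(r)}\kappa^2\pi^2 K}) + 2\exp(-\frac{n}{2})$, and finish by replacing $\theta_r^{(r)}$ with the larger $\theta$, which only weakens the exponent and delivers the stated bound.

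The step requiring the most care — and the main obstacle — is the claim that $E_r^T\mathbf{C}_{\mathcal{I}}$ is, conditionally, a clean mean-zero Gaussian with the stated variance. Unlike the inactive-variable terms $R^{(\ell)}$ (where $E_\ell$ is genuinely independent of $(\mathbf{X}_{\mathcal{I}},\mathbf{X}_r)$ by the antedependence structure), the vector $\mathbf{C}_{\mathcal{I}}$ is assembled from the dual variables $\tilde{a}^{(\ell)}$ and hence depends on $\mathbf{X}_r$, and thus on $E_r$ itself, through the restricted primal solution $\tilde\beta$. One must argue that this self-dependence does not invalidate the Gaussian-tail control: here I would exploit that $\mathbf{C}_{\mathcal{I}}$ lies in the column space of $\mathbf{X}_{\mathcal{I}}$, so only the projection $\mathbf{X}_{\mathcal{I}}(\mathbf{X}_{\mathcal{I}}^T\mathbf{X}_{\mathcal{I}})^{-1}\mathbf{X}_{\mathcal{I}}^T E_r$ of $E_r$ ever enters, and that the dual vector has magnitude capped at $\frac{\pi^2}{6}$ regardless of the realized data, so that the variance event in Lemma \ref{lem:probargu} remains measurable with respect to $\mathbf{X}_{\mathcal{I}}$ and the lemma applies legitimately. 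Verifying this reduction rigorously is the crux; the remaining H\"older and eigenvalue estimates are routine.
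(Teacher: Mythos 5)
Your proposal follows essentially the same route as the paper's proof: condition on $\mathbf{X}_{\mathcal{I}}$, use the decomposition \eqref{eq:decomposition1} so that the deterministic piece is bounded by $1-\alpha$ via Assumption \ref{airrepre} and \eqref{eq:boundingdualvariable}, and control the remaining term $E_r^T\mathbf{X}_{\mathcal{I}}(\mathbf{X}_{\mathcal{I}}^T\mathbf{X}_{\mathcal{I}})^{-1}\bigl(\sum_{\ell}W^{(\ell)}\ast\tilde{a}^{(\ell)}\bigr)_{\mathcal{I}}$ through the conditional-variance bound from Lemma \ref{lem:mineigenvalue} fed into Lemma \ref{lem:probargu}, with the same constants. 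The dependence of the dual vector $\tilde{a}^{(\ell)}$ (hence of $\mathbf{C}_{\mathcal{I}}$) on $E_r$ that you flag as the crux is a genuine subtlety, but the paper's own proof does not treat it either, so on this point your write-up is if anything more careful than the original.
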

    \begin{proof}
      Conditioned on $\mathbf{X}_{\mathcal{I}}$, from the decomposition 
      \eqref{eq:decomposition1} and the definition of $\mathbf{C}_{\mathcal{I}}$
      \[
        \mathbf{X}^T_r \mathbf{C}_{\mathcal{I}} = 
        \Sigma_{r\mathcal{I}} 
        \left( \Sigma_{\mathcal{I}\mathcal{I}} \right)^{-1}
        \left( \sum_{\ell=1}^{r-1} W^{(\ell)} \ast 
        \tilde{a}^{(\ell)} \right)_{\mathcal{I}} + 
        E_r^T \mathbf{X}_{\mathcal{I}} \left( \mathbf{X}^T_{\mathcal{I}}
        \mathbf{X}_{\mathcal{I}}\right)^{-1} \left( \sum_{\ell=1}^{r-1}
        W^{(\ell)}\ast \tilde{a}^{(\ell)}\right)_{\mathcal{I}}.
      \]
      By the irrepresentable assumption \eqref{airrepre} and \eqref{eq:boundingdualvariable},
      \[
        \Sigma_{r\mathcal{I}} 
        \left( \Sigma_{\mathcal{I}\mathcal{I}} \right)^{-1}
        \left( \sum_{\ell=1}^{r-1} W^{(\ell)} \ast 
        \tilde{a}^{(\ell)} \right)_{\mathcal{I}} \leq 1-\alpha.
      \]
      Note that $\Var\left( E_{ir} \right) = \theta_r^{(r)}$ for $i=1,\dots,n$.
      Let $B^{(r)}= E_r^T \mathbf{X}_{\mathcal{I}} \left( \mathbf{X}^T_{\mathcal{I}}
      \mathbf{X}_{\mathcal{I}}\right)^{-1} \left( \sum_{\ell=1}^{r-1}
      W^{(\ell)}\ast \tilde{a}^{(\ell)}\right)_{\mathcal{I}}$.
      By Lemma \ref{lem:mineigenvalue}, $B^{(r)}$ has mean zero and variance at most
      \[
        \Var\left( B^{(r)} \Big| \mathbf{X}_{\mathcal{I}} \right)  = 
        \frac{\theta^{(r)}_r}{n} \left( \sum_{\ell=1}^{r-1} W^{(\ell)} \ast 
        \tilde{a}^{(\ell)} \right)_{\mathcal{I}}^T \left( \frac{1}{n}
        \mathbf{X}_{\mathcal{I}}^T \mathbf{X}_{\mathcal{I}}\right)^{-1} 
        \left( \sum_{\ell=1}^{r-1} W^{(\ell)} \ast 
        \tilde{a}^{(\ell)} \right)_{\mathcal{I}}
        \leq \frac{3\theta_r^{(r)} \kappa^2 \pi^2 K}{2 n},
      \]
      with probability greater than $1-2\exp\left( \frac{n}{2} \right)$. By
      Lemma \ref{lem:probargu}, we have that
      \[
        \Prob \left[ B^{(r)} \geq \alpha \right] \leq 2 \exp
        \left( -\frac{n\alpha^2}{3 \theta_r^{(r)} \kappa^2 \pi^2 K} \right)
        + 2 \exp\left( -\frac{n}{2} \right).
        \nonumber
      \]
    \end{proof}

    Since $\frac{\norm{\mathbf{O}_{\mathcal{I}}E_r}^2_2}{\theta^{(r)}_r} \sim 
    \chi^2 \left( n-K \right)$. To bound it, 
    we cite a concentration inequality from \cite{wainwright2009sharp} 
    (specifically (54b)) as the following lemma:
    \begin{lemma}[Tail Bounds for $\chi^2$-variates, \citealt{wainwright2009sharp}]
      \label{lem:chisqconcentration}
      For a centralized $\chi^2$-variate $X$ with $d$ degrees of freedom, for all
      $\varepsilon \in \left( 0, 1/2 \right)$, we have
      \begin{align}
        \Prob \left[ X \leq d(1-\varepsilon) \right] \leq \exp \left( 
        -\frac{1}{4}d\varepsilon^2\right).
        \nonumber
      \end{align}
    \end{lemma}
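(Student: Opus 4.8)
The plan is to establish this lower-tail bound by a standard Chernoff (exponential Markov) argument applied to $-X$, finishing with one elementary scalar inequality. Writing $X = \sum_{i=1}^d Z_i^2$ for independent standard Gaussians $Z_i$, I would first record the moment generating function of a single squared Gaussian, $\E\left[ e^{-t Z_i^2} \right] = (1+2t)^{-1/2}$ for every $t > -1/2$; by independence this gives $\E\left[ e^{-t X} \right] = (1+2t)^{-d/2}$ for $t > 0$.

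Next, for any $t > 0$ I would bound the lower tail through Markov's inequality applied to the nonnegative variable $e^{-tX}$:
\begin{align}
  \Prob\left[ X \le d(1-\varepsilon) \right]
  = \Prob\left[ e^{-tX} \ge e^{-t d(1-\varepsilon)} \right]
  \le e^{t d(1-\varepsilon)} (1+2t)^{-d/2}
  = \exp\left( d\, g(t) \right),
  \nonumber
\end{align}
where $g(t) = t(1-\varepsilon) - \tfrac{1}{2}\log(1+2t)$. I would then optimize the exponent by solving $g'(t) = (1-\varepsilon) - (1+2t)^{-1} = 0$, which yields $t^\star = \tfrac{\varepsilon}{2(1-\varepsilon)}$; this is strictly positive for $\varepsilon \in (0,1)$, so the Chernoff direction is legitimate. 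Substituting back gives $g(t^\star) = \tfrac{1}{2}\left[ \varepsilon + \log(1-\varepsilon) \right]$, hence $\Prob\left[ X \le d(1-\varepsilon) \right] \le \exp\left( \tfrac{d}{2}\left[ \varepsilon + \log(1-\varepsilon) \right] \right)$.

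The main (and only nonroutine) step is the scalar inequality $\varepsilon + \log(1-\varepsilon) \le -\tfrac{1}{2}\varepsilon^2$. I would dispatch it using the series $\log(1-\varepsilon) = -\sum_{k \ge 1} \varepsilon^k / k$, valid for $\varepsilon \in (0,1)$, so that $\varepsilon + \log(1-\varepsilon) = -\sum_{k \ge 2} \varepsilon^k / k \le -\tfrac{1}{2}\varepsilon^2$ because every omitted term is nonpositive. Combining this with the previous bound produces $\Prob\left[ X \le d(1-\varepsilon) \right] \le \exp\left( -\tfrac{1}{4} d \varepsilon^2 \right)$, which is the claim; the argument in fact holds for all $\varepsilon \in (0,1)$, so the hypothesis $\varepsilon \in (0,1/2)$ is more than enough.
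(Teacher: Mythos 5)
Your proof is correct. Note that the paper does not prove this lemma at all: it simply imports it as equation (54b) of \citet{wainwright2009sharp}, so there is no in-paper argument to compare against. Your Chernoff computation is the standard derivation of this lower-tail bound: the MGF $(1+2t)^{-d/2}$, the optimizer $t^\star = \varepsilon/(2(1-\varepsilon))$, the exponent $\tfrac{d}{2}[\varepsilon + \log(1-\varepsilon)]$, and the series bound $\varepsilon + \log(1-\varepsilon) = -\sum_{k\ge 2}\varepsilon^k/k \le -\varepsilon^2/2$ are all right, and you correctly observe that the argument actually covers all $\varepsilon\in(0,1)$, strictly more than the stated range $(0,1/2)$.
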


    From Lemma \ref{lem:chisqconcentration} it follows that
    \begin{align}
      \Prob\left[ \norm{\mathbf{O}_{\mathcal{I}} E_r}^2_2
      \leq  \theta_r^{(r)} \left( n-K  \right) \left( 1-\varepsilon \right)\right]
      \leq \exp \left( -\frac{1}{4}\left( n-K \right)\varepsilon^2 \right),
      \nonumber
    \end{align}
    which together with Lemma \ref{lem:boundingXrCI} implies that
    \begin{align}
      &\Prob \left[ \frac{\left( \mathbf{X}^T_r 
        \mathbf{C}_{\mathcal{I}} \right)^2 }
        {\norm{\mathbf{O}_{\mathcal{I}}E_r}^2_2}\geq
        \frac{1}{\theta_r^{(r)} \left( n-K \right)
      \left( 1-\varepsilon \right)}\right] \nonumber\\
      \leq& 2\exp\left( -\frac{n\alpha^2}{3 \theta_r^{(r)} \kappa^2 \pi^2 K} \right)
      + 2 \exp \left( -\frac{n}{2} \right) + \exp \left( 
      -\frac{1}{4}\left( n-K \right)\varepsilon^2\right).
      \nonumber
    \end{align}
    The result follows from a union bound.
  \end{proof}

  \section{Proof of Lemma \ref{lem:concentration}} \label{proof:concentration}
  \begin{proof}
    The proof strategy is based on the proof of Lemma 2 in \citet{bien2015convex}.

    For the design matrix $\mathbf{X}_{n\times p}$ with independent rows, 
    denote $X_i = \left( \mathbf{X}_{i\cdot} \right)^T \in \real^p$. 
    Then $X_i$ are i.i.d with mean 0 and true covariance matrix 
    $\Sigma = \left( {L}^T L \right)^{-1}$ for $i=1,...,n$. 
    And $\bar{X} = \frac{1}{n}\sum_{i=1}^n X_i$ has mean 0 
    and true covariance matrix $\frac{1}{n}\Sigma$.

    Let $Y_i = L X_i \in \real^p$. 
    Then $Y_i$ are i.i.d with mean 0 and true covariance matrix 
    $L \tSig {L}^T = L \left( {L}^T L \right)^{-1}
    {L}^T= \mathbf{I}_p$. 
    And $\bar{Y} = \frac{1}{n}\sum_{i=1}^n Y_i = 
    \frac{1}{n}\sum_{i=1}^n L X_i = L \bar{X}$ 
    has mean zero and covariance matrix $\frac{1}{n}\mathbf{I}_p$. 
    Also the corresponding design matrix $\mathbf{Y} = \mathbf{X} {L}^T$ 
    has independent rows.
    \begin{align}
      S\tLt &= \frac{1}{n} \sum_{i=1}^n \left( X_i - \bar{X} \right) 
      \left( X_i - \bar{X} \right)^T {L}^T  \nonumber\\
      &= \frac{1}{n} \sum_{i=1}^n \left(  X_i -  \bar{X} \right) 
      \left( L X_i - L \bar{X} \right)^T 
      =\frac{1}{n} \sum_{i=1}^n \left(  X_i -  \bar{X} \right) 
      \left(  Y_i - \bar{Y} \right)^T.
      \nonumber
    \end{align}
    So we have
    \[
      \left( S {L}^{T} \right)_{ij} = n^{-1} \sum_{k=1}^p X_{ki}Y_{kj} - 
      \bar{X}_i \bar{Y}_j.
    \]
    Letting
    \[
      \mathcal W = S{L}^T - L^{-1},
    \]
    we have that
    \[ 
      \left| \mathcal{W}_{ij} \right| \leq 
      \left| n^{-1} \sum_{k=1}^p X_{ki}Y_{kj} - 
      \left( L^{-1} \right)_{ij}\right| + \left | \bar{X}_i \bar{Y}_j \right|.
    \]
    \begin{align}
      &\Prob \left[ \max_{ij} \left| \mathcal W\right|_{ij} > t \right]  
      \nonumber\\
      \leq & \Prob \left[ \max_{ij}\left| n^{-1} \sum_{k=1}^p X_{ki}Y_{kj} - 
      \left( \tL ^{-1}\right)_{ij}\right| > \frac{t}{2}\right] + 
      \Prob \left[ \max_{ij} \left | \bar{X}_i \bar{Y}_j \right| > 
      \frac{t}{2}\right] 
      \nonumber\\
      \leq & \Prob \left[ \left| n^{-1} \sum_{k=1}^p X_{ki}Y_{kj} - 
        \left( \tL^{-1} \right)_{ij}\right| > \frac{t}{2} 
      \,\,\text{for some } \, i,j \right] \nonumber\\
      &+ 
      \Prob \left[ \max_{i} \left | \bar{X}_i \right| > 
      \sqrt{\frac{t}{2}}\right] + 
      \Prob \left[ \max_{j} \left | \bar{Y}_j \right| > 
      \sqrt{\frac{t}{2}}\right] 
      \nonumber\\
      \leq & \sum_{ij}\Prob \left[ \left| n^{-1} \sum_{k=1}^p X_{ki}Y_{kj} -  
      \left( \tL^{-1} \right)_{ij}\right| > \frac{t}{2} \right] +  
      \sum_i \Prob \left[\left | \bar{X}_i \right| > 
      \sqrt{\frac{t}{2}}\right] + \sum_{j} 
      \Prob \left[ \left | \bar{Y}_j \right| > \sqrt{\frac{t}{2}}\right] 
      \nonumber\\
      \leq & p^2\max_{ij}\Prob \left[ \left| n^{-1} \sum_{k=1}^p X_{ki}Y_{kj} -
      \left( \tL^{-1} \right)_{ij}\right| > \frac{t}{2} \right] 
      \nonumber\\
      &+ p \max_i \Prob \left[\left | \bar{X}_i \right| > 
      \sqrt{\frac{t}{2}}\right] + 
      p \max_{j} \Prob \left[ \left | \bar{Y}_j \right| > 
      \sqrt{\frac{t}{2}}\right] 
      \nonumber\\
      := & p^2 \max_{ij} I_{ij} + p \max_i I^X_i + p\max_j I^Y_j.
      \nonumber
    \end{align}

    Consider $I^X_{i}$ first. 
    Since $X_{ki}$ are independent sub-Gaussian with variance 
    $\Sigma_{ii}$ for $k=1,..,n$, we have

    \begin{align}
      \E \exp \left( t \frac{\bar{X}_i}{\sqrt{\tSig_{ii}/n}}\right) 
      & = \prod_{k=1}^n \E \exp \left( t \frac{X_{ki}}{\sqrt{n\tSig_{ii}}}\right)
      \quad \text{by independence}
      \nonumber \\
      & \leq \prod_{k=1}^n \exp \left( \tilde{C}_1 t^2/n\right) 
      = \exp(\tilde C_1 t^2)
      \quad \text{by the definition of sub-Gaussian},
      \nonumber
    \end{align}
    so $\bar{X}_i$ is sub-Gaussian with variance $\Sigma_{ii}/n$.

    By Lemma 5.5 in \cite{vershynin2010introduction}, we have
    \[
      \Prob \left[ \left| \bar{X}_i\right|/ \sqrt{\Sigma_{ii}^\ast}
      >t \right) \leq \exp \left( 1- t^2/K_1^2 \right],
    \]
    where $K_1$ is a constant that does not depend on $i$. 

    Following the same argument we have
    \[
      \E \exp\left( t \bar{Y}_i / \sqrt{1/n} \right) = \prod_{k=1}^n
      \E \exp\left( t Y_{ki} / \sqrt{n} \right) 
      \leq \exp \left( \tilde C_2 t^2 \right),
    \]
    thus
    \[
      \Prob \left[ \left| \bar{Y}_i\right|/ \sqrt{1/n}
      >t \right) \leq \exp \left( 1- t^2/K_2^2 \right],
    \]
    where $K_2$ is a constant that does not depend on $i$. 
    And we have 
    \begin{align}
      I_i^{X} + I_i^{Y} &= 
      \Prob \left[ \left| \bar{X}_i\right| > \sqrt{t/2} \right] + 
      \Prob \left[ \left| \bar{Y}_i\right| > \sqrt{t/2} \right] 
      \nonumber\\
      &= \Prob \left[ \frac{\left| \bar{X}_i\right|}{\sqrt{\Sigma_{ii}/n}}
      > \frac{\sqrt{t/2}}{\sqrt{\Sigma_{ii}/n}} \right] + 
      \Prob \left[ \left| \frac{\bar{Y}_i}{\sqrt{1/n}}\right| 
      > \frac{\sqrt{t/2}}{\sqrt{1/n}} \right] \nonumber\\
      &\leq \exp \left( 1- \frac{nt}{2K_1^2 \Sigma_{ii}^\ast} \right) + 
      \exp \left( 1- \frac{nt}{2K_2^2} \right).
      \nonumber 
    \end{align}
    Thus
    \[
      \max_i{\left( I_i^{X}+ I_i^{Y} \right)} \leq 4 
      \exp\left( -\frac{C_1nt}{\max_i \Sigma_{ii}^\ast} \right) + 
      4\exp\left( -C_2nt \right)
    \]
    for some constant $C_1$.

    Now consider the term $I_{ij}$. We have shown that both $\mathbf{X}$ and 
    $\mathbf{Y}$ have independent rows. So for any $i,j$, 
    $Z_k^{(ij)} = X_{ki} Y_{kj}$ are independent for $k=1,\dots,n$.
    Let $X \sim N \left( \textbf{0}, \Sigma \right)$ and $Y \sim N\left( \text{0}, \mathbf{I}_p \right)$, then
    \[
      \E \left( X_{ki}Y_{kj} \right) = \cov \left( X, \tL X \right)_{ij} - 0
      = \left[ \cov \left( X,X \right)\tLt \right]_{ij} 
      = \left( \Sigma \tLt \right)_{ij} = \left( \tL^{-1} \right)_{ij}.
    \]

    If there exist $\nu_{ij}$ and $c_{ij}$ such that
    \begin{align}
      & \sum_{k=1}^n \E \left( X_{ki}^2 Y_{kj}^2 \right) \leq \nu_{ij} 
      \nonumber\\
      & \sum_{k=1}^n \E \left\{ \left( X_{ki}Y_{kj} \right)_+^q \right\}
      \leq \frac{q!}{2}\nu_{ij}c_{ij}^{q-2}
      \quad \text{for some} \quad q \geq 3 \in \mathbb N,
      \nonumber
    \end{align}
    then by Theorem 2.10 (Corollary 2.11) in \cite{boucheron2013concentration},
    $\forall t>0$, we have
    \[
      \Prob \left[ \left| \sum_{k=1}^n \left( X_{ki}Y_{kj} - 
      \left( \tL \right)^{-1}_{ij}\right)\right| >t  \right]\leq 
      2 \exp \left( - \frac{t^2}{2\left( \nu_{ij} + c_{ij}t \right)}\right).
    \]

    The rest of the proof focuses on characterizing $\nu_{ij}$ and $c_{ij}$. 
    First, Lemma 5.5 in \cite{vershynin2010introduction} shows that, for some constant $K_3$ that does not depend on $j$,
    \begin{align}
      \left( \E \left| X_{ij} / \sqrt{\Sigma_{jj}}\right|^q \right)^{1/q}
      \leq K_3 \sqrt{q}
      \nonumber
    \end{align}
    holds for all $q \geq 1$. Thus,
    \begin{align}
      \E \left|X_{ij}\right|^q \leq K_3^q q^{q/2} 
      \left( \Sigma_{jj} \right)^{q/2}.
      \nonumber
    \end{align}
    Following the same argument, there exists some constant $K_4$ that does not depend on $j$ such that
    \begin{align}
      \E \left|Y_{ij}\right|^q \leq K_4^q q^{q/2}
      \nonumber
    \end{align}
    for all $q \geq 1$.

    Therefore,
    \[
      \sum_{k=1}^n \E \left( X_{ki}^2Y_{kj}^2 \right) \leq
      \sum_{k=1}^n \sqrt{\E X_{ki}^4 \E Y_{kj}^4} \leq 
      n \sqrt{K_3^4 2^4 K_4^4 2^4 {\Sigma_{ii}}^2}
      = 16 n K_3^2K_4^2 \Sigma_{ii},
    \]
    and
    \[
      \sum_{k=1}^n \E \left\{ \left( X_{ki}Y_{kj} \right)_+^q \right\} \leq 
      \sum_{k=1}^n \sqrt{\E X_{ki}^{2q} \E Y_{kj}^{2q}} \leq 
      n \sqrt{K_3^{2q} \left( 2q \right)^{2q} K_4^{2q} 
      \left(\Sigma_{ii}\right)^2}
      = n K_3^qK_4^q \left( 2q \right)^q \left( \Sigma_{ii} \right)^{q/2}.
    \]
    So taking
    \begin{align}
      \nu_{ij} = K_5 n \Sigma_{ii}^\ast, \nonumber\\
      c_{ij} = K_5 \sqrt{\Sigma_{ii}^\ast} \nonumber
    \end{align}
    for some $K_5$ large enough and does not depend on $i,j$.

    Now we have 
    \[	  
      I_{ij} \leq 2 \exp \left( -\frac{n^2t^2}
      {4\left( 2\nu_{ij} + c_{ij}tn \right)} \right)
      = 2 \exp \left( -\frac{nt^2}{4\left( 2K_5 \Sigma_{ii}^\ast +
      K_5 \sqrt{\Sigma_{ii}}t\right)} \right).
    \]
    If $t \leq 2 \max_i \sqrt{\Sigma_{ii}^\ast}$, then with 
    $C_3 = \left( 16 K_5 \right)^{-1}$ we have 
    \[
      I_{ij} \leq 2 \exp\left( -\frac{C_2 nt^2}{\max_i \Sigma_{ii}^\ast} \right).
    \]

    To sum up, for any $0 < t \leq 2 \max_i \sqrt{\Sigma_{ii}^\ast}$,
    \[
      \Prob \left[ \max_{ij} \left| \mathcal W_{ij}\right| > t \right]
      \leq 2p^2 \exp \left( -\frac{C_2nt^2}{\max_i \Sigma_{ii}^\ast} \right)
      + 4p \exp \left( - \frac{C_1nt}{\max_i \Sigma_{ii}^\ast} \right) + 
      4p \exp \left( - C_2nt \right).
    \]
  \end{proof}

  \vskip 0.2in
  \bibliographystyle{plainnat}
  \bibliography{citation.bib}
  \end{document}